\theoremstyle{plain}
\newtheorem{theorem}{Theorem}[section]
\newtheorem{corollary}[theorem]{Corollary}
\newtheorem{lemma}[theorem]{Lemma}
\newtheorem{proposition}[theorem]{Proposition}
\newtheorem{definition}[theorem]{Definition}
\newtheorem{assumption}[theorem]{Assumption}
\newtheorem*{definition*}{Definition}
\theoremstyle{remark}
\newtheorem{remark}[theorem]{Remark}
\newtheorem{example}[theorem]{Example}
\newtheorem*{claim*}{Claim}
\newtheorem*{remark*}{Remark}
\newtheorem*{example*}{Example}
\newtheorem*{notation*}{Notation}
\numberwithin{equation}{section}
\def\N{{\mathbb N}}
\def\Z{{\mathbb Z}}
\def\R{{\mathbb R}}
\newcommand{\E}{{\mathbb E}}
\newcommand{\one}{{{\bf 1}}}
\newcommand{\eps}{\varepsilon}
\renewcommand{\phi}{\varphi}
\newcommand{\hDelta}{\hat\Delta}
\newcommand{\dd}{\; \mathrm{d}}
\newcommand{\xx}{\mathbf{x}}
\newcommand{\yy}{\mathbf{y}}
\newcommand{\cc}{\mathbf{c}}
\DeclareMathOperator{\Ran}{Ran}
\DeclareMathOperator{\Ker}{Ker}
\newcommand{\ip}[1]{\langle {#1}\rangle}
\newcommand{\bip}[1]{\big\langle {#1}\big\rangle}
\newcommand{\norm}[1]{\parallel {#1}\parallel}
\newcommand{\abs}[1]{\vert {#1}\vert}
\DeclareMathOperator{\grad}{grad}
\DeclareMathOperator{\Ric}{Ric}
\DeclareMathOperator{\Lip}{Lip}
\DeclareMathOperator{\Hess}{Hess}
\DeclareMathOperator{\Leb}{Leb}
\DeclareMathOperator{\HcWI}{H{\cW}I}
\DeclareMathOperator{\mLSI}{MLSI}
\DeclareMathOperator{\mTal}{T_\cW}
\DeclareMathOperator{\Poinc}{P}
\newcommand{\ddt}{\frac{\mathrm{d}}{\mathrm{d}t}}
\newcommand{\ddtr}{\frac{\mathrm{d}^+}{\mathrm{d}t}}
\newcommand{\ddtrz}{\left.\frac{\mathrm{d}^+}{\mathrm{d}t}\right\vert_{t=0}}
\newcommand{\ddhr}{\frac{\mathrm{d}^+}{\mathrm{d}h}}
\newcommand{\ddtt}{\frac{\mathrm{d^2}}{\mathrm{d}t^2}}
\newcommand{\ddr}{\frac{\mathrm{d}}{\mathrm{d}r}}
\newcommand{\cQ}{\mathcal{Q}}
\newcommand{\cM}{\mathcal{M}}
\newcommand{\cH}{\mathcal{H}}
\newcommand{\cB}{\mathcal{B}}
\newcommand{\cW}{\mathcal{W}}
\newcommand{\cI}{\mathcal{I}}
\newcommand{\Proj}{\mathcal{P}}
\newcommand{\cG}{\mathcal{G}}
\newcommand{\cK}{\mathcal{K}}
\newcommand{\cX}{\mathcal{X}}
\newcommand{\cE}{\mathcal{E}}
\newcommand{\cA}{\mathcal{A}}
\newcommand{\CE}{\mathcal{CE}}
\newcommand{\cF}{\mathcal{F}}
\newcommand{\cP}{\mathscr{P}}
\newcommand{\PX}{\cP(\cX)}
\newcommand{\PXs}{\cP_*(\cX)}
\newcommand{\hrho}{\hat\rho}
\newcommand{\ulrho}{\breve{\rho}}
\newcommand{\ulV}{\breve{V}}
\renewcommand{\tilde}{\widetilde}
\newcommand{\nl}{{non-local }}
\begin{document}

\title
[ Ricci curvature of finite Markov chains]
{Ricci curvature of finite Markov chains via convexity of the entropy}

\author{Matthias Erbar}
\author{Jan Maas}
\address{
University of Bonn\\
Institute for Applied Mathematics\\
Endenicher Allee 60\\
53115 Bonn\\
Germany}
\email{erbar@iam.uni-bonn.de}
\email{maas@iam.uni-bonn.de}

\thanks{JM is supported by Rubicon subsidy 680-50-0901 of the
  Netherlands Organisation for Scientific Research (NWO)}
%\keywords{}

%\subjclass[2000]{Primary 60H07; Secondary: 35J15, 35K90, 35R15,
%  47A60, 47B44, 47D05, 47F05, 60H30, 60G15}

 \begin{abstract}
   We study a new notion of Ricci curvature that applies to
   Markov chains on discrete spaces. This notion relies on
   geodesic convexity of the entropy and is analogous to the one
   introduced by Lott, Sturm, and Villani for geodesic measure spaces.
   In order to apply to the discrete setting, the role of the
   Wasserstein metric is taken over by a different metric, having the
   property that continuous time Markov chains are gradient flows of
   the entropy.

   Using this notion of Ricci curvature we prove 
   discrete analogues of fundamental results by Bakry--\'Emery and
   Otto--Villani.
   Furthermore we show that Ricci curvature bounds are preserved under
   tensorisation. As a special case we obtain the sharp Ricci
   curvature lower bound for the discrete hypercube.
 \end{abstract}

%\dedicatory{}

\date\today

\maketitle
 
\tableofcontents

\section{Introduction}
\label{sec:intro}

In two independent contributions Sturm \cite{S06} and Lott and Villani
\cite{LV09} solved the long-standing open problem of defining a
synthetic notion of Ricci curvature for a large class of metric
measure spaces.

The key observation, proved in \cite{vRS05}, is that on a Riemannian
manifold $\cM$, the Ricci curvature is bounded from below by some
constant $\kappa \in \R$, if and only if the Boltzmann-Shannon entropy
$\cH(\rho) = \int\rho \log \rho \dd \mathrm{vol}$ is $\kappa$-convex
along geodesics in the $L^2$-Wasserstein space of probability measures
on $\cM$.
The latter condition does not appeal to the geometric structure $\cM$,
but only requires a metric (to define the $L^2$-Wasserstein metric
$W_2$) and a reference measure (to define the entropy
$\cH$). Therefore this condition can be used in order to define a
notion of Ricci curvature lower boundedness on more general metric
measure spaces. This notion turns out to be stable under
Gromov-Hausdorff convergence and it implies a large number of
functional inequalities with sharp constants. The theory of metric
measure spaces with Ricci curvature bounds in the sense of Lott,
Sturm, and Villani is still under active development
\cite{AGS11a,AGS11b}.

However, the condition of Lott-Sturm-Villani does not apply if the
$L^2$-Wasserstein space over $\cX$ does not contain
geodesics. Unfortunately, this is the case if the underlying space is
discrete (even if the underlying space consists of only two
points). The aim of the present paper is to develop a variant of the
theory of Lott-Sturm-Villani, which does apply to discrete spaces.

In order to circumvent the nonexistence of Wasserstein geodesics, we
replace the $L^2$-Wasserstein metric by a different metric $\cW$,
which has been introduced in \cite{Ma11}. There it has been shown that
the heat flow associated with a Markov kernel on a finite set is the gradient flow of the entropy with respect to $\cW$ (see also the independent work \cite{CHLZ11}
containing related results for Fokker-Planck equations on graphs, as well as \cite{Mie11a}, where this gradient flow structure has been discovered in the setting of reaction-diffusion systems). In
this sense, $\cW$ takes over the role of the Wasserstein metric, since
it is known since the seminal work by Jordan, Kinderlehrer, and Otto
that the heat flow on $\R^n$ is the gradient flow of the entropy
\cite{JKO98} (see \cite{AGS11a,Erb10,FSS10,GKO10,OhSt09,O01}
for variations and generalisations). Convexity along $\cW$-geodesics may thus be regarded as a discrete analogue of McCann's displacement convexity \cite{McC97}, which corresponds to convexity along $W_2$-geodesics in a continuous setting.

Since every pair of probability densities on $\cX$ can be joined by a
$\cW$-geodesic, it is possible to define a notion of Ricci curvature
in the spirit of Lott-Sturm-Villani by requiring geodesic convexity of
the entropy with respect to the metric $\cW$. This possibility has
already been indicated in \cite{Ma11}. We shall show that this notion
of Ricci curvature shares a number of properties which make the LSV
definition so powerful: in particular, it is stable under
tensorisation and implies a number of functional inequalities,
including a modified logarithmic Sobolev inequality, and a
Talagrand-type inequality involving the metric $\cW$.

\subsection*{Main results}
Let us now discuss the contents of this paper in more detail. We work
with an irreducible Markov kernel $K : \cX \times \cX \to \R_+$ on a
finite set $\cX$, i.e., we assume that 
\begin{align*}
\sum_{y\in \cX} K(x,y) = 1
\end{align*}
for all $x\in \cX$, and that for every $x,y \in \cX$ there exists a
sequence $\{x_i\}_{i=0}^n \in \cX$ such that $x_0 = x$, $x_n = y$ and
$K(x_{i-1}, x_i) > 0$ for all $1 \leq i \leq n$. Basic Markov chain theory
guarantees the existence of a unique stationary probability measure
(also called steady state) $\pi$ on $\cX$, i.e.,
\begin{align*}
\sum_{x \in \cX}\pi(x) =1\qquad  \text{and} \qquad\pi(y) = \sum_{x \in \cX} \pi(x)K(x,y)
\end{align*}
for all $y \in \cX$. We assume that $\pi$ is \emph{reversible} for
$K$, which means that the detailed balance equations
\begin{align}
\label{eq:detailed-balance}
 K(x,y) \pi(x) =  K(y,x) \pi(y)
\end{align}
hold for $x, y \in \cX$.

Let 
\begin{align*}
 \PX := \Big\{ \, \rho : \cX \to \R_+ \ | \ 
     \sum_{x \in \cX} \pi(x) \rho(x)  = 1 \, \Big\}
\end{align*}
be the set of \emph{probability densities} on $\cX$. The subset
consisting of those probability densities that are strictly positive
is denoted by $\PXs$.  We consider the metric $\cW$ defined for
$\rho_0, \rho_1 \in \PX$ by
\begin{align*}
 \cW(\rho_0, \rho_1)^2
   := \inf_{\rho, \psi} 
   \bigg\{  \frac12   \int_0^1 
  \sum_{x,y\in \cX} (\psi_t(x) - \psi_t(y))^2
    		 \hat\rho_t(x,y)  K(x,y)\pi(x)
      \dd t 
          \bigg\}\;,
\end{align*}
where the infimum runs over all sufficiently regular curves $\rho :
[0,1] \to \PX$ and $\psi : [0,1] \to \R^\cX$ satisfying the
`continuity equation'
\begin{align} \label{eq:cont}
 \begin{cases}
 \displaystyle\ddt \rho_t(x) 
   + \displaystyle\sum_{y \in \cX} ( \psi_t(y) - \psi_t(x) ) \hat\rho_t(x,y) K(x,y) ~=~0\qquad \forall x \in \cX\;, \\ 
  \rho(0) = \rho_0\;, \qquad \rho(1)  = \rho_1\;.
 \end{cases}
\end{align}
Here, given $\rho\in\PX$, we write
$\hat\rho(x,y):=\int_0^1\rho(x)^{1-p}\rho(y)^p\dd p$ for the
logarithmic mean of $\rho(x)$ and $\rho(y)$. The relevance of the logarithm mean in this setting is due to the identity
\begin{align*}
 \rho(x) - \rho(y) = \hat\rho(x,y) ( \log \rho(x) - \log \rho(y) )\;,
\end{align*}
which somehow compensates for the lack of a `discrete chain rule'.
The definition of $\cW$ can be regarded as a discrete analogue of the Benamou-Brenier formula \cite{BB00}.
Let us remark that if $t \mapsto \rho_t$ is differentiable at some $t$ and $\rho_t$ belongs to $\cP_*(\cX)$, then the continuity equation \eqref{eq:cont} is satisfied for some $\psi_t \in \R^\cX$, which is unique up to an additive constant (see \cite[Proposition 3.26]{Ma11}).

Since the metric $\cW$ is Riemannian in the interior $\PXs$, it makes
sense to consider gradient flows in $(\PXs, \cW)$ and it has been
proved in \cite{Ma11} that the heat flow associated with the
continuous time Markov semigroup $P_t = e^{t(K-I)}$ is the gradient
flow of the entropy
\begin{align} \label{eq:entropy}
 \cH(\rho) = \sum_{x \in \cX} \pi(x) \rho(x) \log \rho(x)\;,
\end{align}
with respect to the Riemannian structure determined by $\cW$. 
 
In this paper we shall show that every pair of densities $\rho_0, \rho_1 \in \PX$ can be joined by a constant speed geodesic. Therefore the following definition in the spirit of Lott-Sturm-Villani seems natural.
\begin{definition}\label{def:intro-Ricci}
  We say that $K$ has \emph{\nl Ricci curvature bounded from below by $\kappa \in \R$} 
  if for any constant speed geodesic $\{\rho_t\}_{t \in [0,1]}$ in $(\PX, \cW)$
  we have
\begin{align*}
  \cH(\rho_t) \leq
  (1-t) \cH(\rho_0) + t \cH(\rho_1) - \frac\kappa{2} t(1-t) \cW(\rho_0, \rho_1)^2\;.
\end{align*}
In this case, we shall use the notation
\begin{align*}
 \Ric(K) \geq \kappa\;.
\end{align*}
\end{definition}

\begin{remark}\label{rem:all-geodesics}
  Instead of requiring convexity along all geodesics it will be shown
  to be equivalent to require that every pair of densities $\rho_0,
  \rho_1 \in \PX$ can be joined by a constant speed geodesic along
  which the entropy is $\kappa$-convex. Another equivalent condition
  would be to impose a lower bound on the Hessian of $\cH$ in the
  interior $\PXs$ (see Theorem \ref{thm:Ric-equiv} below for the details).
\end{remark}

One of the main contributions of this paper is a tensorisation result for \nl Ricci curvature, that we will now describe.
For $1 \leq i \leq n$, let $K_i$ be an irreducible and reversible
Markov kernel on a finite set $\cX_i$, and let $\pi_i$ denote the
corresponding invariant probability measure. Let $K_{(i)}$ denote the
lift of $K_i$ to the product space $\cX = \cX_1 \times \ldots \times
\cX_n$, defined for $\xx = (x_1, \ldots, x_n)$ and $\yy =
(y_1, \ldots, y_n)$ by
\begin{align*}
  K_{(i)}(\xx, \yy) =  \left\{ \begin{array}{ll}
  K_i(x_i, y_i),
   & \text{if } x_j = y_j \text{ for all } j \neq i,\\
  0,
   & \text{otherwise}.\end{array} \right.
\end{align*}
For a sequence $\{\alpha_i\}_{1 \leq i \leq n}$ of nonnegative numbers
with $\sum_{i = 1}^n \alpha_i = 1$, we consider the weighted product
chain, determined by the kernel
\begin{align*}
 K_\alpha := \sum_{i=1}^n \alpha_i K_{(i)}\;.
\end{align*}
Its reversible probability measure is the product measure $\pi = \pi_1 \otimes \cdots \otimes \pi_n$.

\begin{theorem}[Tensorisation of Ricci bounds]\label{thm:main-tensor}
  Assume that $\Ric(K_i)\geq\kappa_i$ for $i=1,\dots,n$. Then we have
  \begin{align*}
    \Ric(K_\alpha)~\geq~\min\limits_i\alpha_i\kappa_i\ .
  \end{align*}
\end{theorem}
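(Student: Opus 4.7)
My approach is to invoke the Hessian-based characterization of \nl Ricci curvature (announced in Remark~\ref{rem:all-geodesics} and established in Theorem~\ref{thm:Ric-equiv}), thereby reducing the product statement to a pointwise Hessian inequality at every interior density. Concretely, writing $\kappa^\ast:=\min_i \alpha_i\kappa_i$, it suffices to show that for every $\rho\in\PXs$ and every potential $\psi\in\R^\cX$,
\begin{align*}
\Hess_{\cW_\alpha}\cH(\rho)[\psi,\psi]\;\geq\;\kappa^\ast\,\|\psi\|_{\rho,\alpha}^2,
\end{align*}
where $\|\psi\|_{\rho,\alpha}^2$ is the $\cW$-Riemannian norm squared (for the kernel $K_\alpha$) of the tangent vector determined by $\psi$ via the continuity equation. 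I would then decompose both sides along the coordinate directions and reduce to the single-factor bounds slicewise.

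On the action side, substituting $K_\alpha=\sum_i\alpha_i K_{(i)}$ gives the linear split $\|\psi\|_{\rho,\alpha}^2=\sum_{i=1}^n\alpha_i\,\cA_i(\rho,\psi)$, where
\begin{align*}
\cA_i(\rho,\psi):=\tfrac12\sum_{\xx,\yy}\bigl(\psi(\xx)-\psi(\yy)\bigr)^2\hat\rho(\xx,\yy)K_{(i)}(\xx,\yy)\pi(\xx).
\end{align*}
Since $K_{(i)}$ only connects points differing in coordinate $i$ and since the logarithmic mean is $1$-homogeneous, $\cA_i$ itself decomposes over slices $\xi$ of the remaining coordinates:
\begin{align*}
\cA_i(\rho,\psi)=\sum_{\xi}\pi_{\hat i}(\xi)\,m_i(\xi)\,\cA_{K_i}\!\bigl(\tilde\rho_i^\xi,\psi(\xi,\cdot)\bigr),
\end{align*}
with $m_i(\xi)=\sum_{x_i}\pi_i(x_i)\rho(\xi,x_i)$, $\tilde\rho_i^\xi(x_i)=\rho(\xi,x_i)/m_i(\xi)\in\cP_*(\cX_i)$, and $\pi_{\hat i}:=\bigotimes_{j\neq i}\pi_j$. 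The Hessian formula for $\cH$ in the $\cW$-metric is quadratic in the kernel, so substituting $K_\alpha$ yields
\begin{align*}
\Hess_{\cW_\alpha}\cH(\rho)[\psi,\psi]\;=\;\sum_{i,j=1}^n\alpha_i\alpha_j\,\cB_{ij}(\rho,\psi),
\end{align*}
where the diagonal integrand $\cB_{ii}$ admits the same slicewise decomposition as $\cA_i$, with the single-factor Hessian integrand $\cB_{K_i}$ in place of $\cA_{K_i}$.

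The technical crux is to control the cross terms $\cB_{ij}$ with $i\neq j$. I expect to show they are nonnegative, using the commutation $L_{(i)}L_{(j)}=L_{(j)}L_{(i)}$ (the lifted generators act on disjoint coordinates) together with an algebraic identity for the log-mean triples that appear in the Hessian integrand, in the spirit of the classical Bakry--\'Emery tensorisation of $\Gamma_2$. Granted this, combining with the slicewise single-factor bound $\cB_{K_i}\geq\kappa_i\cA_{K_i}$ (supplied by $\Ric(K_i)\geq\kappa_i$ and the Hessian characterization for the factor), one obtains
\begin{align*}
\Hess_{\cW_\alpha}\cH(\rho)[\psi,\psi]
&\geq\sum_i\alpha_i^2\,\cB_{ii}(\rho,\psi)
\geq\sum_i\alpha_i^2\kappa_i\,\cA_i(\rho,\psi)\\
&=\sum_i\alpha_i(\alpha_i\kappa_i)\,\cA_i(\rho,\psi)
\geq\kappa^\ast\sum_i\alpha_i\,\cA_i(\rho,\psi)
=\kappa^\ast\,\|\psi\|_{\rho,\alpha}^2,
\end{align*}
using $\alpha_i\kappa_i\geq\kappa^\ast$ and $\cA_i\geq 0$ in the penultimate step. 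The principal obstacle is thus the verification of the cross-term nonnegativity; everything else reduces to bookkeeping of the slice decomposition and the entropy chain rule.
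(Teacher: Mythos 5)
Your overall architecture is exactly the paper's: reduce via the Hessian characterisation of Theorem~\ref{thm:Ric-equiv} to the pointwise inequality $\cB(\rho,\psi)\geq(\min_i\alpha_i\kappa_i)\,\cA(\rho,\psi)$ on $\PXs$, split the action linearly and the Hessian quantity quadratically over the coordinate directions, handle the diagonal blocks $\cB_{ii}$ slicewise by the single-factor bounds (your normalisation by $m_i(\xi)$ is harmless because $\cA_{K_i}$ \emph{and} $\cB_{K_i}$ are $1$-homogeneous in the density, which you should at least note for $\cB_{K_i}$), and dispose of the off-diagonal blocks by showing $\cB_{ij}\geq 0$ for $i\neq j$. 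The arithmetic at the end ($\alpha_i^2\kappa_i\geq\alpha_i\kappa^\ast$ and $\cA_i\geq0$) is fine.

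The genuine gap is that the step you label the ``technical crux'' --- nonnegativity of the cross terms --- is only announced, not proved, and this is precisely where all the work in the paper lies. Commutation of the lifted generators and invariance of the rates in the transverse direction do \emph{not} by themselves make $\cB_{ij}$ a sum of squares: writing $\cB_{ij}=T_1+T_2$ with $T_1$ coming from $-\ip{\hrho\nabla\psi,\nabla\Delta\psi}_\pi$ and $T_2$ from $\tfrac12\ip{\hDelta\rho\,\nabla\psi,\nabla\psi}_\pi$, the term $T_2$ contains the densities at the four points $x,\delta x,\eta x,\delta\eta x$ weighted by $\partial_1\theta$, $\partial_2\theta$, and no purely algebraic identity of the logarithmic mean turns this into a square. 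The paper's argument (the proof of Proposition~\ref{prop:simplecriterion}, reused verbatim for $i\neq j$ in Theorem~\ref{thm:tensorisation}) needs two further inputs beyond commutation and rate invariance: the reversibility identity \eqref{eq:rev} to re-index $T_2$ so that the quadratic weight becomes $\big(\nabla_\delta\psi(\eta x)\big)^2$, and then the inequality \eqref{eq:4term} of Lemma~\ref{lem:thetatricks},
\begin{align*}
  \rho(x)\,\partial_1\theta\big(\rho(\eta x),\rho(\delta\eta x)\big)
  +\rho(\delta x)\,\partial_2\theta\big(\rho(\eta x),\rho(\delta\eta x)\big)
  \;\geq\;\theta\big(\rho(x),\rho(\delta x)\big)\,,
\end{align*}
which rests on concavity and positive homogeneity of $\theta$, to discard a nonnegative remainder $T_4$; only after these two steps does the leftover $T_1+T_3$ collapse to the complete square $\tfrac14\sum\big(\nabla_\delta\psi(\eta x)-\nabla_\delta\psi(x)\big)^2\hrho(x,\delta x)\,c(x,\delta)c(x,\eta)\pi(x)\geq0$. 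So your appeal to ``an algebraic identity for the log-mean triples, in the spirit of Bakry--\'Emery $\Gamma_2$ tensorisation'' names the right target but does not supply it; without \eqref{eq:rev} and \eqref{eq:4term} the claim $\cB_{ij}\geq0$ is exactly the unproved heart of the theorem, and the rest of your argument, which is correct, does not stand on its own.
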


Tensorisation results have also been obtained for other notions of Ricci curvature, including the ones by Lott-Sturm-Villani \cite[Proposition 4.16] {S06} and Ollivier \cite[Proposition 27]{Oll09}.
In both cases the proof does not extend to our setting, and completely different ideas are needed here.

As a consequence we obtain a lower bound on the \nl Ricci curvature for
(the kernel $K_n$ of the simple random walk on) the discrete hypercube
$\{0, 1\}^n$, which turns out to be optimal.

\begin{corollary}\label{cor:main-cube}
For $n \geq 1$ we have $\Ric(K_n) \geq \frac2n$.
\end{corollary}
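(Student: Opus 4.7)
The plan is to deduce the corollary from the tensorisation theorem, after first establishing the sharp lower bound $\Ric(K_1) \geq 2$ for the two-point chain. The key structural observation is that the simple random walk on the hypercube factors as a symmetric product: if $K_1$ denotes the kernel on $\cX_1 = \{0,1\}$ with $K_1(0,1) = K_1(1,0) = 1$ (whose reversible measure is the uniform one), and $K_{(i)}$ is its lift to $\{0,1\}^n$ in the $i$-th coordinate, then
\begin{align*}
  K_n ~=~ \sum_{i=1}^n \tfrac{1}{n} K_{(i)} ~=~ K_\alpha \qquad \text{with } \alpha_i = \tfrac{1}{n}.
\end{align*}
Hence Theorem \ref{thm:main-tensor} gives $\Ric(K_n) \geq \min_i \alpha_i \Ric(K_i) = \frac{1}{n}\Ric(K_1)$, so it suffices to show $\Ric(K_1) \geq 2$.

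For the two-point chain the problem is essentially one-dimensional: the simplex $\cP(\cX_1)$ can be parametrised by a single variable $s \in [0,2]$ via $\rho(0) = s$, $\rho(1) = 2 - s$. Using the equivalent formulation mentioned in Remark \ref{rem:all-geodesics}, it is enough to prove a lower bound on the Hessian of $\cH$ with respect to the Riemannian structure induced by $\cW$ on the interior of this segment. The action restricted to the two-point chain simplifies (after eliminating the potential via the continuity equation) to
\begin{align*}
   \cW(\rho_0, \rho_1)^2 ~=~ \inf_{\rho}\Bigl\{\,\tfrac{1}{2}\int_0^1 \frac{\dot\rho_t(0)^2}{\hat\rho_t(0,1)} \dd t \,\Bigr\}\;,
\end{align*}
so the metric tensor in the variable $s$ is $g(s) = 1/\hat\rho(s)$ where $\hat\rho(s) = \int_0^1 s^{1-p}(2-s)^p \dd p$ is the logarithmic mean of $s$ and $2-s$. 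Similarly the entropy becomes an explicit function of $s$. One then computes the Hessian of $\cH$ with respect to $g$ and shows that it is bounded below by $2$; the factor $2$ emerges from the symmetry $s \leftrightarrow 2-s$ and the definition of the logarithmic mean.

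The main technical obstacle is the last computation: the logarithmic mean makes derivatives of $\hat\rho$ somewhat delicate, and one must check that the inequality $\Hess \cH \geq 2 g$ holds uniformly on the open simplex, including the behaviour as $s \to 0$ or $s\to 2$. This calculation (or an equivalent geodesic-based verification) is the crux of the corollary, and it yields the sharp constant $2$; optimality is then witnessed on the hypercube by testing $\cH$ along a $\cW$-geodesic supported on a single edge, which saturates the bound $2/n$. Combining $\Ric(K_1) \geq 2$ with the tensorisation inequality completes the proof.
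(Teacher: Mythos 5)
Your reduction is sound as far as it goes: the simple random walk on $\{0,1\}^n$ is indeed the product chain $K_\alpha$ with $\alpha_i=\tfrac1n$ and two-point factors $K_1(0,1)=K_1(1,0)=1$, so Theorem \ref{thm:main-tensor} reduces the corollary to the sharp two-point bound $\Ric(K_1)\geq 2$. This is precisely the ``alternative'' route the paper mentions after Example \ref{ex:hypercube}, and it is carried out for general rates in Proposition \ref{prop:hypercube}, where the two-point bound is imported from \cite{Ma11} and then estimated via the logarithmic--geometric mean inequality. The paper's primary proof of the corollary is different and more direct: it applies the criterion of Proposition \ref{prop:simplecriterion} to the mapping representation of $K_n$ by the $n$ coordinate flips (Example \ref{ex:hypercube}); the flips commute, have constant rates $c(x,\delta_i)=\tfrac1n$, and are involutions, so $\Ric(K_n)\geq 2C=\tfrac2n$ in one stroke, with no separate two-point analysis needed.

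The genuine gap is that you never prove $\Ric(K_1)\geq 2$: you reduce it to a one-dimensional Hessian estimate and then explicitly defer that estimate (``one then computes \dots and shows that it is bounded below by $2$''), yet this estimate is the entire quantitative content of the corollary and is not routine --- it is exactly the minimisation that Proposition \ref{prop:hypercube} handles with the logarithmic--geometric mean inequality. Moreover, as set up your target inequality is off by a factor of $2$: with your displayed action $\cW(\rho_0,\rho_1)^2=\inf\tfrac12\int_0^1 \dot\rho_t(0)^2/\hat\rho_t(0,1)\dd t$, the metric coefficient in the coordinate $s$ is $1/(2\hat\rho(s))$, not $1/\hat\rho(s)$, so the inequality you propose to verify, $\Hess\cH\geq 2/\hat\rho$, fails already at the uniform density $s=1$ (there the Hessian coefficient is $\tfrac12(\tfrac1s+\tfrac1{2-s})=1$ while $2/\hat\rho=2$); the correct target is $\Hess\cH\geq 1/\hat\rho$, which is saturated at $s=1$, consistent with sharpness. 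To close the gap within the paper's toolkit, either quote the two-point analysis as in Proposition \ref{prop:hypercube} with $p=q=1$, or apply Proposition \ref{prop:simplecriterion} to the two-point space itself (a single involutive flip with rate $1$, giving $\Ric(K_1)\geq 2$) --- or, as the paper does, apply that criterion to the hypercube directly. Also note that the optimality discussion in your last paragraph is not needed for the statement, which only asserts the lower bound.
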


The hypercube is a fundamental building block for applications in
mathematical physics and theoretical computer science, and the problem
of proving ``displacement convexity'' on this space has been an open
problem that motivated the recent paper by Ollivier and Villani
\cite{OV10}, in which a Brunn-Minkowski inequality has been obtained.

\medskip

Another aspect that we wish to single out at this stage, is the fact
that Ricci bounds imply a number of functional inequalities, that are natural discrete counterparts to powerful inequalities in a continuous setting. In particular, we obtain discrete counterparts to the results by Bakry--\'Emery \cite{BE85} and Otto--Villani \cite{OV00}.

To state the results we consider the Dirichlet form
\begin{align*}
  \cE(\phi, \psi)~=~\frac{1}{2}\sum\limits_{x,y\in\cX} \big(\phi(x)-\phi(y)\big) \big(\psi(x)-\psi(y)\big) K(x,y)\pi(x)\,
\end{align*}
defined for functions $\phi, \psi : \cX \to \R$.
Furthermore, we consider the functional
\begin{align*}
  \cI(\rho)~=~
\cE(\rho, \log \rho)  
\end{align*}
defined for $\rho \in \PX$, with the convention that $\cI(\rho) = +
\infty$ if $\rho$ does not belong to $\PXs$.  Its significance here is
due to the fact that it is the time-derivative of the entropy along
the heat flow: $\ddt \cH(P_t \rho) = - \cI(P_t \rho)$. In this sense,
$\cI$ can be regarded as a discrete version of the Fisher information.

\begin{theorem}[Functional inequalities]\label{thm:main-inequalities}
Let $K$ be an irreducible and reversible Markov kernel on a finite set $\cX$.
\begin{enumerate}
\item If $\Ric(K) \geq \kappa$ for some $\kappa \in \R$, then the
  H$\cW$I-inequality
\begin{align}
  \tag{H$\cW$I($\kappa$)}
  \label{eq:HcWI}
  \cH(\rho)~\leq~\cW(\rho,\one)\sqrt{\cI(\rho)}-\frac{\kappa}{2}\cW(\rho,\one)^2
\end{align}
holds for all $\rho \in \PX$.
\item If $\Ric(K) \geq \lambda$ for some $\lambda > 0$, then the
  modified logarithmic Sobolev inequality
\begin{align}
  \tag{MLSI($\lambda$)}
  \label{eq:ModLSI}
    \cH(\rho)~\leq~\frac{1}{2\lambda}\cI(\rho)
\end{align}
holds for all $\rho \in \PX$.
\item If $K$ satisfies $(\mLSI(\lambda))$ for some $\lambda > 0$, then
  the modified Talagrand inequality
  \begin{align}
    \tag{T$_\cW$($\lambda$)}
  \label{eq:Tal}
      \cW(\rho,\one)~\leq~\sqrt{\frac{2}{\lambda}\cH(\rho)}
  \end{align}
holds for all $\rho\in\PX$.
\item If $K$ satisfies $(\mTal(\lambda))$ for some $\lambda > 0$, then
  the Poincar\'e inequality
  \begin{align}
    \tag{{P($\lambda$)}}
  \label{eq:Poinc}
         \left\|{\phi}\right\|^2_{L^2(\cX,\pi)}~\leq~
         \frac{1}{\lambda}\cE(\phi, \phi)
  \end{align}
holds for all functions $\psi : \cX \to \R$.
\end{enumerate}
\end{theorem}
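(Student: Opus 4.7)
The strategy is to chain the four implications, using the Riemannian calculus on $(\PXs, \cW)$ developed in [Ma11]. Two identifications will be used repeatedly: $|\grad\cH(\rho)|^2 = \cI(\rho)$, and the fact that the heat flow $t \mapsto P_t\rho$ is the $\cW$-gradient flow of $\cH$, so that $|\dot\rho_t|^2 = \cI(\rho_t) = -\ddt\cH(\rho_t)$ along this curve. For (1), I fix $\rho \in \PX$ and pick a constant-speed geodesic $\{\rho_t\}_{t \in [0,1]}$ joining $\rho_0 = \rho$ to $\rho_1 = \one$. Rearranging the $\kappa$-convexity inequality, dividing by $t > 0$, and letting $t \to 0^+$ (using $\cH(\one) = 0$) gives
\[
 -\ddtrz \cH(\rho_t) ~\geq~ \cH(\rho) + \frac{\kappa}{2}\cW(\rho,\one)^2.
\]
The chain rule together with Cauchy-Schwarz in the tangent space at $\rho$ yields $\big| \ddtrz \cH(\rho_t) \big| \leq |\dot\rho_0| \cdot |\grad\cH(\rho)| = \cW(\rho,\one)\sqrt{\cI(\rho)}$, and chaining the two bounds produces \eqref{eq:HcWI}. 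Assertion (2) is then immediate from \eqref{eq:HcWI} with $\kappa = \lambda$: Young's inequality $ab \leq \tfrac{1}{2\lambda}a^2 + \tfrac{\lambda}{2}b^2$ applied with $a = \sqrt{\cI(\rho)}$, $b = \cW(\rho,\one)$ cancels the $-\tfrac{\lambda}{2}\cW^2$ term and leaves \eqref{eq:ModLSI}.

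For (3) I would run the Otto-Villani integration-along-the-heat-flow argument. Set $\rho_t := P_t\rho$, which converges to $\one$ as $t \to \infty$. The gradient-flow identities above yield
\[
 \cW(\rho,\one) ~\leq~ \int_0^\infty |\dot\rho_t| \dd t ~=~ \int_0^\infty \sqrt{\cI(\rho_t)}\,\dd t.
\]
Rewriting $\sqrt{\cI(\rho_t)} = \cI(\rho_t)/\sqrt{\cI(\rho_t)}$ and using \eqref{eq:ModLSI} in the form $\sqrt{\cI(\rho_t)} \geq \sqrt{2\lambda\cH(\rho_t)}$ bounds the integrand by $-\ddt\cH(\rho_t)/\sqrt{2\lambda\cH(\rho_t)}$; the substitution $u = \sqrt{\cH(\rho_t)}$ then collapses the integral to $\sqrt{2\cH(\rho)/\lambda}$, which is \eqref{eq:Tal}.

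For (4) I would linearize \eqref{eq:Tal} around $\one$. For $\phi : \cX \to \R$ with $\sum_x \pi(x)\phi(x) = 0$ and small $\eps > 0$, $\rho_\eps := \one + \eps\phi$ lies in $\PXs$. A Taylor expansion gives $\cH(\rho_\eps) = \tfrac{\eps^2}{2}\|\phi\|^2_{L^2(\cX,\pi)} + O(\eps^3)$, and plugging a nearly constant $\psi_t$ into the Benamou-Brenier-type definition of $\cW$ (with $\hat\rho_t \to 1$ as $\eps \to 0$) yields $\cW(\rho_\eps,\one)^2 = \eps^2 \langle \phi, (-L)^{-1}\phi\rangle_\pi + O(\eps^3)$, where $L = K - I$ is the generator acting on the mean-zero subspace. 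Squaring \eqref{eq:Tal} and dividing by $\eps^2$ before letting $\eps \to 0$ produces the dual inequality $\langle\phi,(-L)^{-1}\phi\rangle_\pi \leq \tfrac{1}{\lambda}\|\phi\|^2_{L^2(\cX,\pi)}$; the Cauchy-Schwarz step $\|\phi\|^2_{L^2(\cX,\pi)} = \langle (-L)^{1/2}\phi, (-L)^{-1/2}\phi\rangle_\pi \leq \cE(\phi,\phi)^{1/2}\langle\phi,(-L)^{-1}\phi\rangle_\pi^{1/2}$ then converts it into \eqref{eq:Poinc}.

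The main obstacle is to justify rigorously the chain rule and the identity $|\grad\cH|^2 = \cI$ used in (1), as well as the integrability of $\sqrt{\cI(\rho_t)}$ near $t = 0$ in (3), when the initial density $\rho$ lies in $\PX \setminus \PXs$; there $\cI(\rho)$ can be infinite and $|\dot\rho|$ is generally ill-defined at the endpoint. The natural remedy is to prove each inequality first for $\rho \in \PXs$, where the Riemannian machinery of [Ma11] applies directly, and then to extend to general $\rho \in \PX$ by approximation together with lower semicontinuity of $\cH$, $\cI$, and $\cW$.
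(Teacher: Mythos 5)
Your parts (2) and (3) are essentially the paper's own arguments: (2) is Young's inequality applied to H$\cW$I, and (3) is the Otto--Villani integration along the heat flow, which the paper packages as monotonicity of $F(t)=\cW(\rho,P_t\rho)+\sqrt{2\cH(P_t\rho)/\lambda}$ using Proposition \ref{prop:HW-derivative} (note that only the inequality $|\dot\rho_t|\le\sqrt{\cI(P_t\rho)}$ is needed, and the boundary case is handled, as you suggest, by proving the inequality on $\PXs$ and approximating). For (1) you take a genuinely different route: you differentiate the $\kappa$-convexity inequality at the endpoint of a geodesic and invoke the chain rule and $|\grad\cH(\rho)|^2=\cI(\rho)$. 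This is the formal Otto--Villani derivation, but its rigor rests exactly on the endpoint regularity of $\cW$-geodesics (one-sided differentiability at $t=0$, velocity of gradient form, validity of the chain rule), which Theorem \ref{thm:existenceminimizers} does not provide and which your sketch does not supply even for $\rho\in\PXs$; the paper deliberately avoids these issues by passing through the evolution variational inequality \eqref{eq:EVI} (the implication ``(1)$\Rightarrow$(2)'' of Theorem \ref{thm:Ric-equiv}, based on Daneri--Savar\'e), choosing $\nu=\one$, $t=0$, and then bounding $-\tfrac12\ddtrz\cW^2(P_t\rho,\one)$ by $\cW(\rho,\one)\sqrt{\cI(\rho)}$ via Proposition \ref{prop:HW-derivative}. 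Also note that for (1) and (2) no approximation at the boundary is needed at all, since $\cI(\rho)=+\infty$ there makes the inequalities trivial.

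The genuine gap is in part (4). Plugging a nearly constant potential (or any test curve) into the Benamou--Brenier-type definition of $\cW$ only yields an \emph{upper} bound, $\cW(\one+\eps\phi,\one)^2\le\eps^2\ip{\phi,(-\Delta)^{-1}\phi}_\pi+o(\eps^2)$, because $\cW$ is defined as an infimum. But $\mTal(\lambda)$ bounds $\cW$ from above, so to extract the dual inequality $\ip{\phi,(-\Delta)^{-1}\phi}_\pi\le\lambda^{-1}\|\phi\|^2_\pi$ (and from it, via your Cauchy--Schwarz step, $\Poinc(\lambda)$) you need the matching \emph{lower} bound $\liminf_{\eps\to0}\eps^{-2}\cW(\one+\eps\phi,\one)^2\ge\ip{\phi,(-\Delta)^{-1}\phi}_\pi$, which is precisely the nontrivial content of this implication: a priori a near-optimal curve joining $\one+\eps\phi$ to $\one$ could pass through densities far from $\one$, where the weights $\hrho_t$ in the action differ substantially from $1$. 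The paper supplies this missing half by a duality argument: take an action-minimising $(\rho^\eps,V^\eps)\in\CE_1'(\one+\eps\phi,\one)$ (Theorem \ref{thm:existenceminimizers}), test its continuity equation against $\phi$, apply H\"older, and then show $\hrho^\eps_t\to1$ uniformly in $t$, using both that the minimiser is a geodesic and the lower bound of $\cW$ by the total variation distance (Proposition \ref{prop:Wasserstein-lowerbound}). Without this (or some substitute establishing the lower bound for the linearisation of $\cW$ at $\one$), your proof of (4) does not go through.
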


Here, $\one$ denotes the density of the stationary measure $\pi$.

The first inequality in Theorem \ref{thm:main-inequalities} is a discrete counterpart to the HWI-inequality
from Otto and Villani \cite{OV00}, with the difference that the metric
$W_2$ has been replaced by $\cW$. 

The second result is as a discrete version of the celebrated criterion
by Bakry-{\'E}mery \cite{BE85}, who proved the corresponding result on
Riemannian manifolds. Classically, the Bakry-{\'E}mery criterion applies to weighted Riemannian manifolds $(\mathcal{M}, e^{-V}{\rm vol}_\mathcal{M})$, and asks for a lower bound on the generalised Ricci curvature given by $\Ric_{\mathcal{M}} + \Hess V$. As in our setting we allow for general $K$ and $\pi$, the potential $V$ is already incorporated in $K$ and $\pi$, and our notion of Ricci curvature could be thought of as the analogue of this generalised Ricci curvature.

The modified logarithmic Sobolev inequality $(\mLSI)$ 
is motivated by the fact that it yields an explicit rate of exponential decay of the entropy along the heat flow. It has been  extensively studied (see, e.g, \cite{BT06,CDPP09}), along with different discrete logarithmic Sobolev inequalities in the literature (e.g., \cite{AL00,BL98}).

The third part is a discrete counterpart to a famous result by Otto
and Villani \cite{OV00}, who showed that the logarithmic Sobolev
inequality implies the so-called $T_2$-inequality; recall that the
$T_p$-inequality is the analogue of $\mTal$, in which $\cW$ is
replaced by $W_p$, for $1 \leq p < \infty$. These inequalities have
been extensively studied in recent years. We refer to \cite{GL10} for
a survey and to \cite{SaTe09} for a study of the $T_1$-inequality in a discrete setting.

The modified Talagrand inequality $\mTal$ that we
consider is new. This inequality combines some of the good
properties of $T_1$ and $T_2$, as we shall now discuss.

Like $T_1$, it is weak enough to be applicable in a discrete
setting. In fact, we shall prove that $\mTal(\lambda)$ holds on the
discrete hypercube $\{0,1\}^n$ with the optimal constant $\lambda =
\frac2n$. By contrast, the $T_2$-inequality does not even hold on the
two-point space, and it has been an open problem to find an adequate
substitute.

Like $T_2$, and unlike $T_1$, $\mTal$ is strong enough to capture spectral information. In fact, the fourth part in Theorem \ref{thm:main-inequalities} asserts that it implies a
Poincar\'e inequality with constant $\lambda$.

Furthermore, we shall show that $\mTal$ yields good bounds on
the sub-Gaussian constant, in the sense that
\begin{align}\label{eq:intro-sub-Gauss}
   \E_\pi \big[e^{t(\phi - \E_\pi[\phi])}\big] \leq \exp\Big(\frac{t^2}{4\lambda}\Big)
\end{align}
for all $t > 0$ and all functions $\phi: \cX \to \R$ that are Lipschitz constant $1$
with respect to the graph norm. Here, we use the notation
$\E_\pi[\phi] = \sum_{x \in \cX} \phi(x)\pi(x)$. As is well known,
this estimate yields the concentration inequality
\begin{align*}
 \pi\big( \phi -  \E_\pi[\phi] \geq h\big) \leq e^{-\lambda h^2}
\end{align*}
for all $h > 0$. The proof of \eqref{eq:intro-sub-Gauss} relies on the
fact, proved in Section \ref{sec:metric}, that the metric $\cW$ can be
bounded from below by $W_1$ (with respect to the graph metric), so
that $\mTal(\lambda)$ implies a $T_1(2\lambda)$-inequality, which is known to be equivalent to the sub-Gaussian inequality \cite{BG99}.
 
The proof of Theorem \ref{thm:main-inequalities} follows the approach
by Otto and Villani. On a technical level, the proofs are simpler in
the discrete case, since heuristic arguments from Otto and Villani
are essentially rigorous proofs in our setting, and no additional PDE
arguments are required as in \cite{OV00}.

To summarise we have the following sequence of implications, for any
$\lambda > 0$:
\begin{align*}
  \Ric(K)\geq\lambda
   \ \Rightarrow \
  \text{\ref{eq:MLSI}}
    \ \Rightarrow \
   \text{\ref{eq:T}}
     \ \Rightarrow \
 \left\{\begin{array}{l}
   \text{\ref{eq:P}}\\
\mathrm{T}_1(2\lambda)\ .\end{array}\right.
\end{align*}

\subsection*{Other notions of Ricci curvature}
This is of course not the first time that a notion of Ricci curvature
has been introduced for discrete spaces, but the notion considered
here appears to be the closest in spirit to the one by
Lott-Sturm-Villani. Furthermore it seems to be the first that yields
natural analogues of the results by Bakry--\'Emery and
Otto--Villani.

A different notion of Ricci curvature has been introduced by Ollivier
\cite{Oll07,Oll09}. This notion is also based on ideas from optimal
transport, and uses the $L^1$-Wasserstein metric $W_1$, which
behaves better in a discrete setting than $W_2$. Ollivier's criterion
has the advantage of being easy to check in many examples. Furthermore, in some interesting cases it yields functional inequalities with good -- yet non-optimal -- constants. Moreover, Ollivier does not assume reversibility, whereas this is strongly used in our approach.
It is not completely clear how Ollivier's notion relates to the one by Lott-Sturm-Villani (see \cite{OV10} for a discussion). Furthermore, it does not seem to be directly comparable to the concept studied here, as it relies on a metric on the underlying space, which is not the case in our approach.

In the setting of graphs Ollivier's Ricci curvature has been further studied in the recent preprints
\cite{BaJoLi11,HuJoLi11,JoLi11}. 

Another approach has been taken by Lin and Yau \cite{LY10}, who
defined Ricci curvature in terms the heat semigroup.

Bonciocat and Sturm \cite{BS09} followed a different approach to
modify the Lott-Sturm-Villani criterion, in which they circumvented 
the lack of midpoints in the $W_2$-Wasserstein metric by allowing for
approximate midpoints. A Brunn-Minkowski inequality in this spirit has been proved on the discrete hypercube by Ollivier and Villani \cite{OV10}.

\subsection*{Organisation of the paper}
In Section \ref{sec:metric} we collect basic properties of the metric
$\cW$ and formulate an equivalent definition, that is more convenient
to work with in some situations. Geodesics in the $\cW$-metric are
studied in Section \ref{sec:geodesics}. In particular it is shown that
every pair of densities can be joined by a constant speed geodesic. In
Section \ref{sec:Ricci} we present the definition of \nl Ricci curvature
and give a characterisation in terms of the Hessian of the
entropy. Section \ref{sec:examples} contains a criterion that allows
to give lower bounds on the Ricci curvature in some basic examples,
including the discrete circle and the discrete hypercube. A
tensorisation result is contained in Section
\ref{sec:tensorisation}. Finally, we introduce new versions of
well-known functional inequalities in Section \ref{sec:inequalities}
and prove implications between these and known inequalities.

\subsection*{Note added} 
After essentially finishing this paper, the authors have been informed
about the preprint \cite{Mie11b}, in which geodesic convexity of the
entropy for Markov chains has been studied as well. The results
obtained in both papers do not overlap significantly and have been
obtained independently.

\subsection*{Acknowledgement} 
{\small
The authors are grateful to Nicola Gigli and Karl-Theodor Sturm for stimulating discussions on this paper and related topics. They thank the anonymous referees for detailed comments and valuable suggestions.}

\section{The metric \texorpdfstring{$\cW$}{W}}\label{sec:metric}

In this section we shall study some basic properties of the metric
$\cW .$ Throughout we shall work with an irreducible and reversible
Markov kernel $K$ on a finite set $\cX$. The unique steady state will
be denoted by $\pi$, and we shall write $P_t := e^{t(K-I)}$, $t
\geq0$, to denote the corresponding Markov semigroup.

We start by introducing some notation.

\subsection{Notation}\label{sec:notation}

For $\phi \in \R^\cX$ we consider the \emph{discrete gradient} $\nabla \phi
\in \R^{\cX \times \cX}$ defined by
\begin{align*}
 \nabla \phi(x,y) := \phi(y) - \phi(x)\;.
\end{align*}
For $\Psi \in \R^{\cX \times \cX}$ we consider the \emph{discrete
  divergence} $\nabla \cdot \Psi \in \R^\cX$ defined by
\begin{align*}
( \nabla \cdot \Psi )(x) 
  := \frac12 \sum_{y \in \cX}  (\Psi(x,y) - \Psi(y,x) ) K(x,y) \in \R\;.
\end{align*}
With this notation we have
\begin{align*}
\Delta :=  \nabla \cdot \nabla    = K - I\;,
\end{align*}
and the integration by parts formula
\begin{align*}
 \ip{\nabla \psi, \Psi}_\pi = -\ip{\psi,\nabla\cdot \Psi}_\pi
\end{align*}
holds. Here we write, for $\phi,\psi \in \R^\cX$ and $\Phi,\Psi \in \R^{\cX \times \cX}$,
\begin{align*}
\ip{\phi, \psi}_\pi &= \sum_{x \in \cX} \phi(x) \psi(x) \pi(x)\;, \\
\ip{\Phi, \Psi}_\pi &= \frac12 \sum_{x,y \in \cX} 
         \Phi(x,y) \Psi(x,y) K(x,y) \pi(x)\;.
\end{align*}

From now on we shall fix a function $\theta : \R_+
  \times \R_+ \to \R_+$ 
satisfying the following assumptions:

\begin{assumption} \label{ass:theta} The function $\theta$ has the following properties:
\begin{itemize}
\item[(A1)] (Regularity): $\theta$ is continuous on $\R_+ \times \R_+$ and $C^\infty$ on $(0,\infty) \times (0,\infty)$;
\item[(A2)] (Symmetry): $\theta(s,t) = \theta(t,s)$ for $s, t \geq 0$;
\item[(A3)] (Positivity, normalisation): $\theta(s,t) > 0$ for $s,t > 0$ and $\theta(1,1)=1$;
\item[(A4)] (Zero at the boundary): $\theta(0,t) = 0$ for all $t \geq  0$;
\item[(A5)] (Monotonicity): $\theta(r, t) \leq \theta(s,t)$ for all $0
  \leq r \leq s$ and $t \geq 0$;
\item[(A6)] (Positive homogeneity): $\theta(\lambda s, \lambda
    t) = \lambda \theta(s,t)$ for $\lambda > 0$ and $s,t \geq 0$;
\item[(A7)] (Concavity): 
the function $\theta : \R_+ \times \R_+ \to \R_+$  is concave.
\end{itemize}
\end{assumption}
It is easily checked that these assumptions imply that $\theta$ is
bounded from above by the arithmetic mean :
\begin{align}\label{eq:theta-arithmetic-ineq}
  \theta(s,t)~\leq~\frac{s+t}{2}\qquad\forall s,t\geq 0\ .
\end{align}
In the next result we collect some properties of the function $\theta$,
which turn out be very useful to obtain \nl Ricci curvature bounds.

\begin{lemma}\label{lem:thetatricks}
For all $s,t,u,v>0$ we have
\begin{align}\label{eq:tricktheta1}
  s\cdot\partial_1\theta(s,t) + t\cdot\partial_2\theta(s,t)~ &=~\theta(s,t)\ ,\\\label{eq:4term}
  s\cdot\partial_1\theta(u,v) + t\cdot\partial_2\theta(u,v) - \theta(s,t)~&\geq~0\ .
\end{align}
\end{lemma}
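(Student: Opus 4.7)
The plan is to derive \eqref{eq:tricktheta1} from the positive homogeneity (A6) and then combine it with concavity (A7) to get \eqref{eq:4term}.

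First I would prove \eqref{eq:tricktheta1} as a version of Euler's relation for $1$-homogeneous functions. Fix $s,t>0$ and consider the map $\lambda \mapsto \theta(\lambda s, \lambda t)$ on $(0,\infty)$. By (A1) this map is $C^\infty$, and by (A6) it equals $\lambda\, \theta(s,t)$. Differentiating with respect to $\lambda$ at $\lambda = 1$, the chain rule yields
\begin{equation*}
s \cdot \partial_1 \theta(s,t) + t \cdot \partial_2 \theta(s,t) = \theta(s,t),
\end{equation*}
which is exactly \eqref{eq:tricktheta1}.

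Next, for \eqref{eq:4term}, I would use the supporting hyperplane property of concave functions. Since $\theta$ is concave on $\R_+ \times \R_+$ by (A7) and differentiable at the interior point $(u,v)$ by (A1), we have the first-order upper bound
\begin{equation*}
\theta(s,t) \leq \theta(u,v) + \partial_1 \theta(u,v)\,(s-u) + \partial_2 \theta(u,v)\,(t-v)
\end{equation*}
for all $s,t > 0$. Rearranging gives
\begin{equation*}
\theta(s,t) \leq \bigl[\theta(u,v) - u\, \partial_1\theta(u,v) - v\, \partial_2\theta(u,v)\bigr] + s\, \partial_1 \theta(u,v) + t\, \partial_2 \theta(u,v),
\end{equation*}
and by \eqref{eq:tricktheta1} applied at $(u,v)$, the bracket vanishes. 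This yields \eqref{eq:4term}.

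No step is truly an obstacle here: both identities are standard consequences of homogeneity and concavity, and the only mild caveat is that the concavity inequality uses differentiability at $(u,v)$, which is guaranteed on $(0,\infty)\times(0,\infty)$ by (A1). The argument is therefore essentially a two-line calculation once one recognises that (A6) plus (A7) are exactly the ingredients needed.
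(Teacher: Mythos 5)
Your proof is correct. The first part (Euler's relation via differentiating $\lambda\mapsto\theta(\lambda s,\lambda t)$ at $\lambda=1$) is exactly the paper's argument. For \eqref{eq:4term} you take a slightly different, and in fact more direct, route: you invoke the supporting-hyperplane (first-order) inequality of the concave function $\theta$ at the interior point $(u,v)$ and then cancel the affine constant using \eqref{eq:tricktheta1} applied at $(u,v)$. The paper instead uses the equivalent characterisation of concavity via monotonicity of the gradient, writes the monotonicity inequality between $(s,t)$ and $(\eps u,\eps v)$, exploits that $\partial_1\theta,\partial_2\theta$ are $0$-homogeneous to pass to the limit $\eps\to 0$, and then applies \eqref{eq:tricktheta1} at $(s,t)$. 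Both arguments rest on the same two hypotheses (A6) and (A7); yours avoids the scaling limit and the homogeneity of the partial derivatives altogether, at the cost of nothing, since differentiability at $(u,v)$ is guaranteed by (A1) and both points lie in the open quadrant where concavity restricts from $\R_+\times\R_+$. So your version is a legitimate, marginally shorter alternative to the published proof.
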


\begin{proof}
  The equality \eqref{eq:tricktheta1} follows immediately from the
  homogeneity (A6) by noting that the left hand side equals
  $\ddr\big\vert_{r=1}\theta(rs,rt)$.  Let us prove
  \eqref{eq:4term}. Note that by the concavity (A7) of $\theta$ the
  gradient $\nabla\theta$ is a monotone operator from $\R_+^2$ to
  $\R^2$. Hence, for all $s,t,x,y>0$ we have
  \begin{align*}
    (s-x)\Big(\partial_1\theta(s,t)-\partial_1\theta(x,y)\Big) + (t-y)\Big(\partial_2\theta(s,t)-\partial_2\theta(x,y)\Big)~\leq~0\ .
  \end{align*}
  By the homogeneity (A6) both $\partial_1\theta$ and
  $\partial_2\theta$ are $0$-homogeneous. Taking now in particular
  $x=\eps u, y=\eps v$ and letting $\eps\to 0$ we obtain
  \begin{align*}
    s\Big(\partial_1\theta(s,t)-\partial_1\theta(u,v)\Big) + t\Big(\partial_2\theta(s,t)-\partial_2\theta(u,v)\Big)~\leq~0\ .
  \end{align*}
  From this we deduce \eqref{eq:4term} by an application of
  \eqref{eq:tricktheta1}.
\end{proof}

The most important example for our purposes is the logarithmic mean
defined by
\begin{align*}
 \theta(s,t) := \int_0^1 s^{1-p} t^{p} \dd p
  			  = \frac{s-t}{\log s - \log t}\;,
\end{align*}
the latter expression being valid if $s,t > 0$ and $s \neq t$.
For $\rho \in \PX$ and $x , y \in \cX$ we define
\begin{align*}
\hrho(x,y)=\theta(\rho(x),\rho(y))\;.
\end{align*}

For a fixed $\rho \in \PX$ it will be useful to consider the
Hilbert space $\cG_\rho$ consisting of all (equivalence classes of)
functions $\Psi : \cX \times \cX \to \R$, endowed with the inner
product
\begin{align}\label{eq:ip}
 \ip{\Phi, \Psi}_\rho := 
 \frac12 
  \sum_{x,y\in \cX} {\Phi(x,y)}{\Psi(x,y)}{\hrho(x,y)}	 K(x,y)  \pi(x)\;.
\end{align}
Here we identify functions that coincide on the set
$\{(x,y)\in\cX\times\cX:\ \hrho(x,y)K(x,y)>0\}$.  The operator
$\nabla$ can then be considered as a linear operator $\nabla :
L^2(\cX) \to \cG_\rho$, whose negative adjoint is the
\emph{$\rho$-divergence operator} $(\nabla_\rho \cdot) : \cG_\rho \to
L^2(\cX)$ given by
\begin{align*}
( \nabla_\rho \cdot \Psi )(x) 
  := \frac12 \sum_{y \in \cX} (\Psi(x,y) - \Psi(y,x)) \hrho(x,y) K(x,y)\;.
\end{align*}

\subsection{Equivalent definitions of the metric $\cW$}

We shall now state the definition of the metric $\cW$ as defined in
\cite{Ma11}. Here and in the rest of the paper we will use the
shorthand notation
\begin{align*}
 \cA(\rho,\psi) := 
  \| \nabla \psi\|_\rho^2
  = \frac12\sum_{x,y\in \cX} (\psi(y)-\psi(x))^2\hrho(x,y) K(x,y) \pi(x)\ .
\end{align*}
$\rho\in\PX$ and $\psi \in \R^{\cX}$.
\begin{definition}\label{def:metric}
For $\bar\rho_0, \bar\rho_1 \in \PX$ we define 
\begin{align*}
  \cW(\bar\rho_0, \bar\rho_1)^2 := 
  \inf \bigg\{ \int_0^1 \cA(\rho_t, \psi_t) \dd t \ : \ {(\rho, \psi) \in \CE_1(\bar\rho_0,\bar\rho_1)} \bigg\}\;,
\end{align*}
where for $T>0$, $\CE_T(\bar\rho_0,\bar\rho_1)$ denotes the
collection of pairs $(\rho,\psi)$ satisfying the following conditions:
\begin{align} \label{eq:conditions} 
 \left\{ \begin{array}{ll}
{(i)} & \rho : [0,T] \to \R^\cX  \text{ is }C^\infty\;;\\ 
{(ii)} &  \rho_0 = \bar\rho_0\;, \qquad \rho_T = \bar\rho_1\;; \\
{(iii)} &  \rho_t \in \PX \text{ for all $t \in [0,T]$}\;;\\
{(iv)} & \psi  : [0,T] \to \R^{\cX}  \text{ is measurable}\;;\\ 
{(v)} &  \text{For all $x \in \cX$ and all $t\in (0,T)$ we have}\\
       &\displaystyle{\dot \rho_t(x) 
         + \sum_{y \in \cX}
         \big(\psi_t(y) - \psi_t(x)\big)\hrho_t(x,y) K(x,y) = 0}\;.
\end{array} \right.
\end{align}
\end{definition}
Using the notation introduced above, the continuity equation in (v) can be written as
\begin{align}\label{eq:ce-symb}
 \dot \rho_t + \nabla \cdot (\hrho \nabla \psi) = 0\;.
\end{align}
Definition \ref{def:metric} is the same as the one in \cite{Ma11},
except that slightly different regularity conditions have been imposed on
$\rho$. We shall shortly see that both definitions are equivalent.

The following results on the metric $\cW$ have been proved in \cite{Ma11}.

\begin{theorem}\label{thm:ma11-main}
\begin{enumerate}
\item The space $(\PX, \cW)$ is a complete metric space,
  compatible with the Euclidean topology.
\item The restriction of $\cW$ to $\PXs$ is the Riemannian
  distance induced by the following Riemannian structure:
\begin{itemize}
\item the tangent space of $\rho \in \PXs$ can be identified
  with the set
\begin{align*}
 T_\rho := \{ \nabla \psi \ : \ \psi \in \R^\cX \}
\end{align*}
by means of the following identification: given a smooth curve
$(-\eps, \eps) \ni t \mapsto \rho_t \in \PXs$ with $\rho_0 = \rho$, there exists a
unique element $\nabla \psi_0 \in T_\rho$, such that the continuity
equation \eqref{eq:conditions}(v) holds at $t = 0$.
\item The Riemannian metric on $T_\rho$ is given by the inner product
\begin{align*}
 \ip{\nabla \phi, \nabla \psi}_\rho 
   = \frac12 \sum_{x,y\in \cX} 
 (\phi(x) - \phi(y))(\psi(x) - \psi(y)) \hrho(x,y)K(x,y) \pi(x)\;.   
\end{align*}
\end{itemize}
\item \label{item:gradFlow} If $\theta$ is the logarithmic mean, i.e., $\theta(s,t) = \int_0^1 s^{1-p} t^p \dd p$, then the heat flow is the gradient flow of the
  entropy, in the sense that for any $\rho \in \PX$ and $t > 0$,
  we have $\rho_t := P_t \rho \in \PXs$ and
\begin{align}\label{eq:gradFlow}
  D_t \rho_t =  -  \grad \cH(\rho_t)\;.
\end{align} 
\end{enumerate}
\end{theorem}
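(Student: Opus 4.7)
The plan is to address the three parts in sequence, since (2) supplies the Riemannian machinery used in (3). For claim (1), I would verify the metric axioms directly from the Benamou--Brenier-type variational formula. Symmetry follows from the reversal $(t,\rho_t,\psi_t)\mapsto(1-t,\rho_{1-t},-\psi_{1-t})$, which preserves both the continuity equation and the action. The triangle inequality comes from concatenating admissible pairs after a linear time rescaling, exploiting the fact that $\cA(\rho,\psi)$ is quadratic in $\psi$. Finiteness requires producing at least one admissible pair joining any $\bar\rho_0,\bar\rho_1\in\PX$: when both lie in $\PXs$, the linear interpolation $\rho_t=(1-t)\bar\rho_0+t\bar\rho_1$ stays in $\PXs$, and for each $t$ the continuity equation can be solved for $\psi_t$ by inverting the symmetric positive operator $L_{\rho_t}\psi(x):=\sum_y(\psi(x)-\psi(y))\hrho_t(x,y)K(x,y)$ on the $\pi$-orthogonal complement of the constants; the resulting action is finite because $\hrho_t K$ is bounded below on $\PXs$. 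Boundary densities are handled by approximation from the interior together with lower semicontinuity of the action. Positive definiteness is clear since zero action forces $\dot\rho_t\equiv 0$. Topological compatibility and completeness then follow by sandwiching $\cW$ between Euclidean quantities on $\PX$: the explicit interpolation yields $\cW(\rho_0,\rho_1)\leq C\|\rho_0-\rho_1\|$ on the interior, while Cauchy--Schwarz applied to the continuity equation produces a matching lower bound, and Euclidean compactness of $\PX$ then gives completeness.

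Claim (2) is essentially a translation of this variational formula into Riemannian language. The operator $L_\rho$ is symmetric and positive semi-definite on $(\R^\cX,\ip{\cdot,\cdot}_\pi)$; its kernel equals the constants by irreducibility of $K$ together with the strict positivity of $\hrho$ on $\PXs$. Hence $L_\rho$ induces a bijection between $\R^\cX/\R\one$ and the tangent space $T_\rho\PX=\{v\in\R^\cX:\sum_x\pi(x)v(x)=0\}$ of the affine manifold $\PX$, so every smooth curve through $\rho$ selects a unique class $\nabla\psi$ via the continuity equation. Pulling back the inner product $\ip{\cdot,\cdot}_\rho$ on $\cG_\rho$ then defines the Riemannian metric on $\PXs$, and since the action along any smooth curve equals the squared Riemannian speed, the infimum in Definition~\ref{def:metric} coincides with the induced Riemannian distance.

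For claim (3), the central identity is the chain rule along a smooth curve $t\mapsto\rho_t$ in $\PXs$ satisfying the continuity equation with potential $\psi_t$. Using that $\dot\rho_t$ has zero $\pi$-mean together with the Dirichlet-form identity $\ip{L_{\rho_t}\phi,\psi}_\pi=\ip{\nabla\phi,\nabla\psi}_{\rho_t}$, one obtains
\[
\ddt\cH(\rho_t)
= \ip{\log\rho_t,\dot\rho_t}_\pi
= \ip{\log\rho_t,L_{\rho_t}\psi_t}_\pi
= \ip{\nabla\log\rho_t,\nabla\psi_t}_{\rho_t}.
\]
This identifies $\grad\cH(\rho)$, viewed as a tangent vector in $T_\rho$, with the class of $\nabla\log\rho$. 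Consequently the gradient flow equation $D_t\rho_t=-\grad\cH(\rho_t)$ is equivalent to selecting $\psi_t=-\log\rho_t$ in the continuity equation, which yields $\dot\rho_t=-L_{\rho_t}\log\rho_t$. Using the defining identity $\hrho_t(x,y)(\log\rho_t(x)-\log\rho_t(y))=\rho_t(x)-\rho_t(y)$ of the logarithmic mean, the right-hand side collapses to $(K-I)\rho_t$, so the gradient flow reduces to the heat equation $\dot\rho_t=(K-I)\rho_t$, whose solution from initial datum $\rho$ is $P_t\rho$. Positivity of $P_t\rho$ for $t>0$, which places the flow inside the Riemannian interior, follows from irreducibility of $K$.

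The main technical obstacle is claim (1): carefully handling densities on the boundary of $\PX$ where $\hrho$ degenerates, both when proving finiteness of $\cW$ and when establishing topological compatibility. Once the metric and Riemannian structures are in place, parts (2) and (3) reduce to finite-dimensional linear algebra together with the chain-rule computation above.
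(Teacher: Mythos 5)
A preliminary remark: the paper itself does not prove Theorem \ref{thm:ma11-main}; it is imported verbatim from \cite{Ma11} (part (3) is also explained heuristically in Remark \ref{rem:log-mean}), so your attempt has to be measured against the proofs there. Your parts (2) and (3) do follow the intended route: identifying tangent vectors by inverting the operator $L_\rho$ on mean-zero functions, computing $\frac{\mathrm{d}}{\mathrm{d}t}\cH(\rho_t)=\ip{\nabla\log\rho_t,\nabla\psi_t}_{\rho_t}$, and using the log-mean identity $\hrho\,\nabla\log\rho=\nabla\rho$ to collapse the gradient flow to $\dot\rho=\Delta\rho$ is exactly the argument of \cite{Ma11}. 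One point you gloss over in (2): the restriction of $\cW$ to $\PXs$ equals the induced Riemannian distance only after one shows that competitor curves touching the boundary of $\PX$ can be replaced by interior curves at arbitrarily small extra cost (cf.\ \cite[Lemma 3.30]{Ma11}, the same device invoked in the proof of Lemma \ref{lem:smooth-reformulation}); the observation that the action of a smooth interior curve equals its squared Riemannian speed does not by itself rule out that leaving $\PXs$ is cheaper.

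The genuine gap is in part (1), exactly at the point you flag as the main obstacle but do not resolve. Finiteness of $\cW$ at boundary densities cannot be obtained by ``approximation from the interior together with lower semicontinuity of the action'': lower semicontinuity bounds a limiting action from \emph{below}, whereas finiteness needs an \emph{upper} bound, i.e.\ an explicit admissible curve of finite action reaching the boundary. The naive candidate, linear interpolation, genuinely fails there: on a path $a\sim b\sim c$ with $\rho_0$ concentrated at $a$, the flux across the edge $\{b,c\}$ along $\rho_t=(1-t)\rho_0+t\one$ is of order one while $\hrho_t(b,c)=\theta(t,t)=t$, so the action integrand is of order $1/t$ and its integral diverges. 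Similarly, the bound $\cW(\rho_0,\rho_1)\le C\|\rho_0-\rho_1\|$ only holds with a constant depending on a positive lower bound for the densities (already on the two-point space $\cW$ fails to be Lipschitz with respect to the Euclidean metric up to the boundary), so it cannot deliver compatibility of the topologies at the boundary, and hence completeness, as stated. The missing ingredient is the construction of curves that cross the degenerate region with an inverse-square-root speed profile, exploiting the finiteness of $\int_{-1}^{1}\mathrm{d}r/\sqrt{2\theta(1-r,1+r)}$; this is what yields finite distances between Dirac masses and the estimate $\cW\le W_{2,\cW}\le \frac{c}{\sqrt k}\,W_{2,g}$ (Lemma \ref{lem:distances} and Proposition \ref{prop:Wasserstein-upperbound}, resp.\ \cite[Theorem 2.4, Lemma 3.14, Theorem 3.12]{Ma11}), which, combined with the lower bound by $d_{TV}$ that you correctly identify, gives finiteness, continuity up to the boundary, and completeness. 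Without this ingredient your plan for (1) does not go through.
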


\begin{remark}\label{rem:t-is-0}
  If $\rho$ belongs to $\PXs$, then the gradient flow equation
  \eqref{eq:gradFlow} holds also for $t = 0$.
\end{remark}

\begin{remark}\label{rem:log-mean}
The relevance of the logarithmic mean can be seen as follows. The heat equation $\dot\rho_t = \Delta \rho_t = \nabla \cdot (\nabla \rho_t)$ can be rewritten as a continuity equation \eqref{eq:ce-symb} provided that 
\begin{align*}
\nabla \psi = - \frac{\nabla \rho}{\hrho}\;.
\end{align*}
On the other hand, an easy computation (see \cite[Proposition 4.2 and Corollary 4.3]{Ma11}) shows that under the identification above, the gradient of the entropy is given by 
\begin{align*}
 \grad_\cW \cH(\rho) = \nabla \log \rho\;.
\end{align*}
Combining these observations, we infer that the heat flow is the gradient flow of the entropy with respect to $\cW$, precisely when 
\begin{align*}
 \frac{\nabla \rho}{\hrho}=  \nabla \log \rho\;,
\end{align*}
i.e., when $\theta$ is the logarithmic mean.

This argument shows that the same heat flow can also be identified as the gradient flow of the functional $\cF(\rho) = \sum_{x\in \cX} f(\rho(x)) \pi(x)$ for any smooth function $f : \R \to \R$ with $f'' > 0$, if one replaces the logarithmic mean by $\theta(r,s) = \frac{r-s}{f'(r) - f'(s)}$. We refer to \cite{Ma11} for the details.
\end{remark}

Our next aim is to provide an equivalent formulation of the definition
of $\cW$, which may seem less intuitive at first sight, but offers
several technical advantages. First, the continuity equation becomes
linear in $V$ and $\rho$, which allows to exploit the concavity of
$\theta$. Second, this formulation is more stable so that we can prove
existence of minimizers in the class $\CE'_0(\bar\rho_0,\bar\rho_1)$.
Similar ideas have already been developed in a continuous setting in
\cite{DNS09}, where a general class of transportation metrics is
constructed based on the usual continuity equation in $\R^n$.

An important role will be played by the function $\alpha : \R \times
\R_+^2 \to \R \cup \{ + \infty\}$ defined by
\begin{align*}
 \alpha(x,s,t) =  \left\{ \begin{array}{ll}
 0\;,
  &  \theta(s,t) = 0\text{ and } x = 0 \;,\\
 \frac{x^2}{\theta(s,t)}\;,
  & \theta(s,t) \neq 0\;,\\
 + \infty\;,
  & \theta(s,t) = 0 \text{ and } x \neq 0\;.\end{array} \right.
\end{align*}
The following observation will be useful.
\begin{lemma}\label{lem:convex}
The function $\alpha$ is lower semicontinuous and convex.
\end{lemma}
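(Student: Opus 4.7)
The plan is to characterize the epigraph of $\alpha$ explicitly and verify directly that it is both closed and convex; this gives lower semicontinuity and convexity simultaneously.

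First, I would rewrite the piecewise definition in the unified form
$$\mathrm{epi}(\alpha) = \big\{(x,s,t,r) \in \R \times \R_+^2 \times \R \ : \ r \geq 0,\ x^2 \leq r\,\theta(s,t)\big\}.$$
A short case check confirms this: when $\theta(s,t)>0$ the inequality $\alpha(x,s,t)\leq r$ rearranges to $x^2 \leq r\,\theta(s,t)$ with $r\geq 0$; when $\theta(s,t)=0$ and $x=0$ both sides of that inequality vanish and the condition reduces to $r\geq 0 = \alpha(0,s,t)$; and when $\theta(s,t)=0$ with $x\neq 0$ we have $\alpha=+\infty$ and correspondingly $x^2 \leq r\cdot 0$ has no solutions. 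Lower semicontinuity of $\alpha$ is then immediate, since by continuity of $\theta$ on $\R_+^2$ (Assumption (A1)) the defining inequalities are closed conditions.

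The substantive step will be convexity of the epigraph. Given two points $(x_i,s_i,t_i,r_i)\in\mathrm{epi}(\alpha)$ for $i=1,2$ and $\lambda\in[0,1]$, set $\theta_i:=\theta(s_i,t_i)$ and let $\bar x,\bar s,\bar t,\bar r$ denote the convex combinations. Concavity of $\theta$ (Assumption (A7)) gives
$$\theta(\bar s,\bar t) \;\geq\; \lambda\theta_1 + (1-\lambda)\theta_2,$$
so it suffices to establish the stronger bound
$$\bar x^2 \;\leq\; \bar r\cdot\big(\lambda\theta_1 + (1-\lambda)\theta_2\big).$$
This is essentially the convexity of the perspective of $x\mapsto x^2$: from $|x_i|\leq\sqrt{r_i\theta_i}$ and the triangle inequality,
$$|\bar x| \;\leq\; \lambda|x_1| + (1-\lambda)|x_2| \;\leq\; \sqrt{\lambda r_1}\,\sqrt{\lambda\theta_1} + \sqrt{(1-\lambda)r_2}\,\sqrt{(1-\lambda)\theta_2},$$
and a single application of the Cauchy--Schwarz inequality to the two-dimensional vectors $\big(\sqrt{\lambda r_1},\sqrt{(1-\lambda)r_2}\big)$ and $\big(\sqrt{\lambda\theta_1},\sqrt{(1-\lambda)\theta_2}\big)$ bounds the right-hand side by $\sqrt{\bar r}\,\sqrt{\lambda\theta_1 + (1-\lambda)\theta_2}$; squaring gives the desired inequality.

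I do not expect any genuine obstacle. The only delicate point is the boundary set $\{\theta=0\}$, but the epigraph reformulation absorbs it uniformly: the single closed convex condition $x^2 \leq r\,\theta(s,t)$ simultaneously encodes both the finite and infinite values of $\alpha$, and concavity of $\theta$ combined with Cauchy--Schwarz does all the work.
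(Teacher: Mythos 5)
Your proof is correct. The paper's own proof is a one-line pointer: it ``easily checks'' the claim from (A7) together with the joint convexity of $(x,y)\mapsto x^2/y$ on $\R\times(0,\infty)$, i.e.\ it composes the concave $\theta$ with a convex function that is nonincreasing in its second argument, leaving the extended-value boundary case $\theta(s,t)=0$ and the lower semicontinuity implicit. You instead verify everything directly at the level of the epigraph: the identity $\mathrm{epi}(\alpha)=\{(x,s,t,r):\,r\geq 0,\ x^2\leq r\,\theta(s,t)\}$ is checked case by case, closedness of this set (hence lower semicontinuity) follows from continuity of $\theta$, and its convexity follows from concavity of $\theta$ plus the Cauchy--Schwarz estimate $\lambda\sqrt{r_1\theta_1}+(1-\lambda)\sqrt{r_2\theta_2}\leq\sqrt{\bar r}\sqrt{\lambda\theta_1+(1-\lambda)\theta_2}$, which is in effect a hands-on proof of the convexity of the perspective of the square that the paper cites. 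So the underlying ingredients coincide, but your route is more self-contained and has the advantage that the single inequality $x^2\leq r\,\theta(s,t)$ absorbs the $\theta=0$ cases and yields semicontinuity and convexity in one stroke, at the cost of being longer than the paper's appeal to standard convexity facts.
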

\begin{proof}
  This is easily checked using (A7) and the convexity of the function
  $(x,y)\mapsto \frac{x^2}{y}$ on $\R \times (0,\infty)$.
\end{proof}

Given $\rho \in \PX$ and $V \in \R^{\cX \times \cX}$  we define
\begin{align*}
 \cA'(\rho, V) := \frac12
 \sum_{x,y\in \cX} \alpha(V(x,y),\rho(x), \rho(y)) K(x,y) \pi(x)\ ,
\end{align*}
and we set
\begin{align*}
 \CE_T'(\bar\rho_0, \bar\rho_1) 
   := \{ (\rho, \psi)  \ : \ (i'), (ii), (iii), (iv'), (v') \text{ hold\,}
            \}\;,
\end{align*}
where 
\begin{align} \label{eq:conditions-2} 
 \left\{ \begin{array}{ll}
{(i')} & \rho : [0,T] \to \R^\cX  \text{ is continuous} \;;\\ 
{(iv')} & V  : [0,T] \to \R^{\cX\times\cX} \text{ is locally integrable}\;;\\
{(v')} & \text{For all $x \in \cX$ we have in the sense of distributions}\\
       &\displaystyle{\dot \rho_t(x) 
         + \frac12 \sum_{y \in \cX}
         \big(V_t(x,y) - V_t(y,x)\big) K(x,y) = 0}\;.\
\end{array} \right.
\end{align}
The continuity equation in $(v')$ can equivalently be written as 
\begin{align*}
 \dot \rho_t + \nabla \cdot V = 0\;. 
\end{align*}
As an immediate consequence of Lemma \ref{lem:convex} we obtain the
following convexity of $\cA'$.
\begin{corollary}\label{cor:convexaction}
  Let $\rho^i\in\PX$ and $V^i\in\R^{\cX\times\cX}$ for $i=0,1$. For
  $\tau\in[0,1]$ set $\rho^\tau:=(1-\tau)\rho^0+ \tau\rho^1$ and
  $V^\tau:=(1-\tau) V^0+ \tau V^1$. Then we have
  \begin{align*}
    \cA'(\rho^\tau,V^\tau)~\leq~(1-\tau)\cA'(\rho^0,V^0)+ \tau
    \cA'(\rho^1,V^1)\ .
  \end{align*}
\end{corollary}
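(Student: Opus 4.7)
The statement is an immediate consequence of Lemma \ref{lem:convex}. The plan is to reduce the joint convexity of $\cA'$ in $(\rho,V)$ to the pointwise convexity of the integrand, and then to invoke the convexity of $\alpha$ provided by Lemma \ref{lem:convex}.

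More precisely, I would first observe that for each fixed pair $(x,y) \in \cX \times \cX$, the evaluation map
\begin{align*}
  \R^\cX \times \R^{\cX\times\cX}~\ni~(\rho,V)~\longmapsto~\bigl(V(x,y),\rho(x),\rho(y)\bigr)~\in~\R\times\R_+\times\R_+
\end{align*}
is linear (and maps the relevant subsets into the domain of $\alpha$). Composing with the convex function $\alpha$ of Lemma \ref{lem:convex} yields that
\begin{align*}
  (\rho,V)~\longmapsto~\alpha\bigl(V(x,y),\rho(x),\rho(y)\bigr)
\end{align*}
is convex. Plugging in the convex combinations $\rho^\tau=(1-\tau)\rho^0+\tau\rho^1$ and $V^\tau=(1-\tau)V^0+\tau V^1$ gives, for every $(x,y)$,
\begin{align*}
  \alpha\bigl(V^\tau(x,y),\rho^\tau(x),\rho^\tau(y)\bigr)~\leq~(1-\tau)\,\alpha\bigl(V^0(x,y),\rho^0(x),\rho^0(y)\bigr)+\tau\,\alpha\bigl(V^1(x,y),\rho^1(x),\rho^1(y)\bigr)\ .
\end{align*}

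Finally, since the weights $K(x,y)\pi(x)$ are nonnegative, multiplying both sides by $\tfrac{1}{2}K(x,y)\pi(x)$ and summing over $x,y\in\cX$ preserves the inequality and yields exactly
\begin{align*}
  \cA'(\rho^\tau,V^\tau)~\leq~(1-\tau)\cA'(\rho^0,V^0)+ \tau\,\cA'(\rho^1,V^1)\ ,
\end{align*}
which is the claim. There is no real obstacle here; the entire content has been packaged into Lemma \ref{lem:convex}, whose convexity assertion encodes the concavity assumption (A7) on $\theta$ together with the standard convexity of the perspective function $(x,y)\mapsto x^2/y$ on $\R\times(0,\infty)$.
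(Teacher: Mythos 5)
Your argument is exactly the paper's: the corollary is stated there as an immediate consequence of Lemma \ref{lem:convex}, precisely via the pointwise convexity of $\alpha$ composed with the linear evaluation $(\rho,V)\mapsto(V(x,y),\rho(x),\rho(y))$ and summation against the nonnegative weights $\tfrac12 K(x,y)\pi(x)$. The proof is correct and complete.
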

Now we have the following reformulation of Definition
\ref{def:metric}.
\begin{lemma}\label{lem:smooth-reformulation}
  For $\bar\rho_0, \bar\rho_1 \in \PX$ we have
  \begin{align*}
    \cW(\bar\rho_0, \bar\rho_1)^2 =   \inf \bigg\{
    \int_0^1 \cA'(\rho_t,V_t) \dd t \ : \ {(\rho, V) \in \CE'_1(\bar\rho_0, \bar\rho_1)} \bigg\}\;.
  \end{align*}
  Furthermore, if $\bar\rho_0,\bar\rho_1 \in \PXs$, condition $(iv)$ in
  \eqref{eq:conditions} can be reinforced into: ``$\psi : [0,T] \to
  \R^{\cX}$ is $C^\infty$''.
\end{lemma}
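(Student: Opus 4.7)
I will establish the two inequalities between $\cW(\bar\rho_0, \bar\rho_1)^2$ and the new infimum separately, and then address the smoothness refinement. For $\cW^2 \geq \inf_{\CE'_1}\int_0^1 \cA' \dd t$: given $(\rho, \psi) \in \CE_1(\bar\rho_0, \bar\rho_1)$, I set $V_t(x,y) := \hrho_t(x,y)(\psi_t(y) - \psi_t(x))$. This is the antisymmetric field whose divergence appears in the continuity equation in $(v)$, so $(\rho, V) \in \CE'_1(\bar\rho_0, \bar\rho_1)$. Moreover $\alpha(V(x,y), \rho(x), \rho(y)) = \hrho(x,y)(\psi(y) - \psi(x))^2$ on $\{\hrho > 0\}$ and both sides vanish where $\hrho = 0$, so $\cA'(\rho_t, V_t) = \cA(\rho_t, \psi_t)$, and taking the infimum over $\CE_1$ yields the inequality.

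For the reverse inequality, given $(\rho, V) \in \CE'_1$ with finite action I will produce an approximating sequence in $\CE_1$. First, I replace $V_t$ by its antisymmetric part $\hat V_t := \frac12(V_t - V_t^T)$: the continuity equation in $(v')$ depends only on this part, while the action cannot increase thanks to convexity and even symmetry of $\alpha(\cdot,s,t)$, symmetry of $\theta$, and the detailed-balance identity $K(x,y)\pi(x)=K(y,x)\pi(y)$. Next, I regularize in space by setting $\rho^\eps := (1-\eps)\rho + \eps\one$ and $V^\eps := (1-\eps)V$; Corollary \ref{cor:convexaction} shows the action decreases by a factor $(1-\eps)$, and now $\rho^\eps \geq \eps > 0$ pointwise, at the cost of shifted endpoints. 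Third, I regularize in time: after a reparametrisation that pauses the curve on short intervals near $t=0,1$ (multiplying the action by a factor close to $1$), I extend by constants to $\R$ and convolve with a smooth mollifier $\eta_\delta$ in time; Jensen's inequality for the convex functional built from $\alpha$ (Lemma \ref{lem:convex}) ensures the time-integrated action does not grow. Finally, short linear-interpolation segments are inserted at either end to bring the endpoints back to $\bar\rho_0, \bar\rho_1$; a direct computation shows their action contribution is $O(\eps^2/\tau')$, which vanishes as $\eps \to 0$ for a suitable choice of the reconnection time $\tau' \to 0$.

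At this stage the regularized pair $(\rho^{\eps,\delta}, V^{\eps,\delta})$ is smooth, with $\rho^{\eps,\delta} > 0$ and $V^{\eps,\delta}$ antisymmetric. I then perform the gradient projection: for each $t$, the element $V^{\eps,\delta}_t / \hrho^{\eps,\delta}_t$ of the Hilbert space $\cG_{\rho^{\eps,\delta}_t}$ is projected orthogonally onto the closed subspace of gradients $\{\nabla\psi:\psi\in\R^\cX\}$, yielding $\nabla\psi^{\eps,\delta}_t$ with $\psi^{\eps,\delta}_t$ unique modulo additive constants. Since the orthogonal complement of the gradients in $\cG_{\rho^{\eps,\delta}_t}$ coincides with the kernel of $\nabla_\rho \cdot$ by adjointness of $\nabla$ and $-\nabla_\rho\cdot$, we have $\nabla\cdot(\hrho^{\eps,\delta}\nabla\psi^{\eps,\delta}) = \nabla\cdot V^{\eps,\delta}$, so $(\rho^{\eps,\delta}, \psi^{\eps,\delta}) \in \CE_1$; non-expansiveness of the projection yields $\cA(\rho^{\eps,\delta}, \psi^{\eps,\delta}) \leq \cA'(\rho^{\eps,\delta}, V^{\eps,\delta})$. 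Sending the regularization parameters to zero gives $\cW^2 \leq \inf_{\CE'_1}\int_0^1 \cA'\dd t$.

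The smoothness refinement in the $\PXs$ case follows by keeping $\rho \in \PXs$ throughout the above construction: since both endpoints are already strictly positive, the reconnection segments can be taken inside $\PXs$ and no further perturbation of the endpoints is necessary. With $\rho \in C^\infty([0,1], \PXs)$, the self-adjoint operator $L_t\psi := -\nabla\cdot(\hrho_t\nabla\psi)$ depends smoothly on $t$ and is invertible on the zero-mean subspace (by irreducibility together with $\hrho_t > 0$ everywhere), so $\psi_t = L_t^{-1}(\dot\rho_t)$ is automatically $C^\infty$ in $t$. I expect the main technical obstacle to be the bookkeeping in the multi-parameter limit: keeping the endpoints fixed through the time mollification while simultaneously controlling the action contributions from the pause, mollification, and reconnection segments, so that the total action converges to $\int_0^1 \cA'(\rho_t, V_t)\dd t$ as $\eps,\delta,\tau \to 0$ in an appropriate order. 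The convexity inputs (Lemma \ref{lem:convex} and Corollary \ref{cor:convexaction}) make each elementary step safe, but the combined estimate requires care.
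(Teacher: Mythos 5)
Your overall architecture is close to the paper's: the ``$\geq$'' direction via $V=\hrho\,\nabla\psi$, the time-mollification with pauses at the endpoints controlled by the convexity of $\cA'$ (Corollary \ref{cor:convexaction}), and the final orthogonal projection of the velocity onto $\Ran(\nabla)$ in $\cG_\rho$ with the non-expansiveness estimate are exactly the paper's steps. Where you deviate is in how the possible vanishing of $\hrho$ is handled, and there lies a genuine gap. You regularise in space by $\rho^\eps=(1-\eps)\rho+\eps\one$, which moves the endpoints, and you then assert that the linear reconnection segments between $\bar\rho_0$ and $(1-\eps)\bar\rho_0+\eps\one$ cost $O(\eps^2/\tau')$ ``by a direct computation''. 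That estimate requires $\hat\sigma_s$ to be bounded below along the interpolation, which fails precisely in the case the lemma must cover, namely when $\bar\rho_0$ (or $\bar\rho_1$) vanishes at some vertex: there $\sigma_s(x)=s\eps$, so the action density of any admissible reconnection field of size $\eps/\tau'$ is of order $\eps^2/(\tau'{}^2\,\theta(s\eps,\cdot))$, and the resulting integral depends on the mean $\theta$. For the logarithmic mean one only loses a (harmless) factor $\log(1/\eps)$, but the lemma is stated under the general Assumption \ref{ass:theta}, and for other admissible means (e.g.\ the harmonic mean) the naive linear reconnection has infinite action. So the crucial estimate is unproven and false in the stated generality; it can be repaired — for instance by bounding the reconnection cost by a time-rescaled near-minimiser, i.e.\ by $(\cW(\bar\rho_0,\rho^\eps_0)+\delta)^2/\tau'$, using continuity of $\cW$ with respect to the Euclidean topology (Theorem \ref{thm:ma11-main}) and choosing $\eps=\eps(\tau')$ — but as written this is the missing step, and you yourself flag the multi-parameter bookkeeping as incomplete.

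For comparison, the paper never perturbs the endpoints and needs no reconnection: it only pauses the curve near $t=0,1$ and mollifies in time with a kernel strictly positive on its support. The point is then purely measure-theoretic: finiteness of $\int_0^1\cA'(\rho_t,V_t)\dd t$ forces the set $\{t:\hat\rho_t(x,y)=0,\ V_t(x,y)\neq0\}$ to be negligible, and after convolution the corresponding set for $(\rho^\eps,V^\eps)$ is empty (by (A3)--(A4), $\hat\rho^\eps_t(x,y)=0$ forces $\tilde\rho_{t+s}(x)=0$ or $\tilde\rho_{t+s}(y)=0$ for a.e.\ $s$ in the mollifier's support, hence $\tilde V_{t+s}(x,y)=0$ a.e.\ there, hence $V^\eps_t(x,y)=0$). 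This makes $\Psi^\eps=V^\eps/\hat\rho^\eps$ well defined without any strict positivity of $\rho$, which is exactly what your spatial regularisation was trying to buy. For the $C^\infty$ refinement when $\bar\rho_0,\bar\rho_1\in\PXs$, the paper first replaces the competitor by a curve staying in $\PXs$ (following \cite[Lemma 3.30]{Ma11}) and then argues smoothness of $\psi^\eps$ via smooth dependence of the projection on $\rho$; your $L_t^{-1}$ argument for that last point is fine, but it again presupposes the positivity and endpoint issues above have been resolved.
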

\begin{proof}
  The inequality ``$\geq$'' follows easily by noting that the infimum is
  taken over a larger set. Indeed, given a pair
  $(\rho,\psi)\in\CE_1(\bar\rho_0,\bar\rho_1)$ we obtain a pair
  $(\rho,V)\in\CE'_1(\bar\rho_0,\bar\rho_1)$ by setting
  $V_t(x,y)=\nabla\psi_t(x,y)\hrho_t(x,y)$ and we have
  $\cA'(\rho_t,V_t)=\cA(\rho_t,\psi_t)$.

  To show the opposite inequality ``$\leq$'', we fix an arbitrary pair
  $(\rho, V) \in \CE_1'(\bar\rho_0,\bar\rho_1)$. It is sufficient to
  show that for every $\eps>0$ there exists a pair
  $(\rho^\eps,\psi^\eps)\in\CE_1(\bar\rho_0,\bar\rho_1)$ such that
\begin{align*}
\int_0^1\cA(\rho^\eps_t,\psi^\eps_t)\dd t~\leq~\int_0^1\cA'(\rho_t,V_t)\dd t + \eps\ .
\end{align*}
  For this purpose we first regularise $(\rho,V)$ by a mollification
  argument.
  We thus define $(\tilde\rho,\tilde
  V):[-\eps,1+\eps]\to\PX\times\R^{\cX\times\cX}$ by
  \begin{align*}
    (\tilde\rho_t,\tilde V_t)~=~
    \begin{cases}
      (\rho(0),0)\ , & t\in[-\eps,\eps)\ ,\\
      (\rho(\frac{t-\eps}{1-2\eps}),\frac{1}{1-2\eps}V(\frac{t-\eps}{1-2\eps}))\ , & t\in[\eps,1-\eps)\ ,\\
      (\rho(1),0)\ , & t\in[1-\eps,1+\eps]\ ,
    \end{cases}
  \end{align*}
  and take a nonnegative smooth function $\eta:\R\to\R_+$ which
  vanishes outside of $[-\eps,\eps]$, is strictly positive on
  $(-\eps,\eps)$ and satisfies $\int \eta(s)\dd s = 1$.  For
  $t\in[0,1]$ we define
  $$\rho^\eps_t~=~\int\eta(s)\tilde\rho_{t+s}\dd s\ ,\qquad V^\eps_t~=~\int\eta(s)\tilde V_{t+s}\dd s\ .$$
  Now $t\mapsto \rho_t^\eps$ is $C^\infty$ and using the continuity of
  $\rho$ it is easy to check that
  $(\rho^\eps,V^\eps)\in\CE_1'(\bar\rho_0,\bar\rho_1)$. Moreover,
  using the convexity from Corollary \ref{cor:convexaction} we can
  estimate
  \begin{align*}
    \int_0^1\cA'(\rho^\eps_t,V^\eps_t)\dd t~&\leq~\int_0^1\int\eta(s)\cA'(\tilde\rho_{t+s},\tilde V_{t+s})\dd s\dd t\\
    &\leq~\int_{-\eps}^{1+\eps}\cA'(\tilde\rho_{t},\tilde V_{t})\dd
    t~=~\frac{1}{1-2\eps}\int_0^1\cA'(\rho_t,V_t)\dd t\ .
  \end{align*}
  To proceed further, we may assume without loss of generality that
  $V(x,y)=0$ whenever $K(x,y)=0$. The fact that
  $\int_0^1\cA'(\rho_t,V_t)\dd t$ is finite implies that the set $\{ t
  : \hat\rho_t(x,y) = 0 \text{ and } V_t(x,y) \neq 0 \}$ is negligible
  for all $x, y \in \cX$. Taking properties (A3) and (A4) of
  the function $\theta$ into account, this implies that for the convolved
  quantities the corresponding set $\{ t : \hat\rho^\eps_t(x,y) = 0
  \text{ and } V^\eps_t(x,y) \neq 0 \}$ is empty for all
  $x,y\in\cX$. Hence there exists a measurable function
  $\Psi^\eps:[0,1]\to\R^{\cX\times\cX}$ satisfying
  \begin{align}\label{eq:V2Psi}
    V_t^\eps(x,y)~=~\Psi^\eps_t(x,y)\hat\rho_t^\eps(x,y) \qquad\mbox{ for all } x,y\in\cX \mbox{ and all } t\in[0,1]\ .
  \end{align}

  It remains to find a  function $\psi^\eps : [0,1] \to \R^{\cX}$ such
  that 
  $\nabla_{\rho_t^\eps}\cdot \Psi_t^\eps = \nabla_{\rho_t^\eps}\cdot\nabla\psi_t^\eps$. 
  Let $\Proj_\rho$ denote the orthogonal projection in $\cG_\rho$ onto the range of
  $\nabla$. Then there exists a measurable function
  $\psi^\eps:[0,1]\to\R^\cX$ such that
  $\Proj_{\rho^\eps_t}\Psi^\eps_t=\nabla\psi^\eps_t$. The orthogonal
  decomposition
  \begin{align}\label{eq:orthogonal}
    \cG_{\rho^\eps_t}~=~\Ran(\nabla)\oplus^\perp\Ker(\nabla^*_{\rho^\eps_t})
  \end{align}
  implies that
   $\nabla_{\rho_t^\eps}\cdot \Psi_t^\eps = \nabla_{\rho_t^\eps}\cdot\nabla\psi_t^\eps$,
  hence $(\rho^\eps,\psi^\eps)\in\CE_1(\bar\rho_o,\bar\rho_1)$. Using
  the decomposition \eqref{eq:orthogonal} once more, we infer that
  $\ip{\nabla\psi^\eps_t,\nabla\psi^\eps_t}_{\rho^\eps_t}\leq\ip{\Psi^\eps_t,\Psi^\eps_t}_{\rho^\eps_t}$. This
  implies $\cA(\rho^\eps_t,\psi^\eps_t)\leq\cA'(\rho^\eps_t,V^\eps_t)$
  and finishes the proof of the first assertion.

  If $\bar\rho_0$ and $\bar\rho_1$ belong to $\PXs$, one can
  follow the argument in \cite[Lemma 3.30]{Ma11} and construct a curve
  $(\ulrho, \ulV) \in \CE_1'(\bar\rho_0, \bar\rho_1)$ such that
  $\ulrho_t \in \PXs$ for $t \in [0,1]$ and
\begin{align*}
\int_0^1\cA'(\ulrho_t,\ulV_t)\dd t~\leq~\int_0^1\cA'(\rho_t,V_t)\dd t + \eps\;.
\end{align*}
Then one can apply the argument above. In this case, $\rho^\eps_t(x) >
0$ for all $x \in \cX$ and $t \in [0,1]$, and therefore the function
$\Psi^\eps:[0,1]\to\R^{\cX\times\cX}$ is $C^\infty$. Furthermore,
since the orthogonal projection $P_\rho$ depends smoothly on $\rho \in
\PXs$, the function $\psi^\eps : [0,1] \to \R^\cX$ is smooth as
well.
\end{proof}

\begin{remark}\label{rem:defscoincide}
  In \cite{Ma11} the metric $\cW$ has been defined as in Definition
  \ref{def:metric}, with the difference that $(i)$ in
  \eqref{eq:conditions-2} was replaced by ``$\rho:[0,T]\to\PX$ is
  piecewise $C^1$''. Therefore Lemma \ref{lem:smooth-reformulation}
  shows in particular that Definition \ref{def:metric} coincides with
  the original definition of $\cW$ from \cite{Ma11}.
\end{remark}
\subsection{Basic properties of $\cW$}

As an application of Lemma \ref{lem:smooth-reformulation} we shall
prove the following convexity result, which is a discrete counterpart
of the well-known fact that the squared $L^2$-Wasserstein distance
over Euclidean space is convex with respect to linear interpolation
(see, e.g., \cite[Theorem 5.11]{DNS09}).

\begin{proposition}[Convexity of the squared distance]\label{prop:squared-W-cvx}
  For $i, j = 0, 1$, let $\rho_i^j \in \PX$, and for $\tau \in
  [0,1]$ set $\rho_i^\tau := (1-\tau) \rho_i^0 + \tau \rho_i^1$. Then
\begin{align*}
 \cW(\rho_0^\tau,\rho_1^\tau)^2 
   \leq (1- \tau)  \cW(\rho_0^0,\rho_1^0)^2 
 	 +      \tau   \cW(\rho_0^1,\rho_1^1)^2\;.
\end{align*}
\end{proposition}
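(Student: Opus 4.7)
The plan is to exploit the reformulation of $\cW^2$ provided by Lemma \ref{lem:smooth-reformulation}, in which the infimum is taken over pairs $(\rho,V)$ solving the \emph{linear} continuity equation $\dot\rho + \nabla\cdot V = 0$, together with the convexity of the action functional $\cA'$ established in Corollary \ref{cor:convexaction}. The point is that the original continuity equation in Definition \ref{def:metric} is nonlinear in $(\rho,\psi)$, so linear interpolation of competitors does not behave well; the passage to the $(\rho,V)$ formulation removes this obstruction.

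Concretely, fix $\eps>0$ and use Lemma \ref{lem:smooth-reformulation} to choose, for each $j\in\{0,1\}$, a pair $(\rho^j, V^j) \in \CE'_1(\rho_0^j, \rho_1^j)$ with
\begin{align*}
\int_0^1 \cA'(\rho^j_t, V^j_t)\dd t \;\leq\; \cW(\rho_0^j,\rho_1^j)^2 + \eps\ .
\end{align*}
Define the interpolated curves $\rho^\tau_t := (1-\tau)\rho^0_t + \tau\rho^1_t$ and $V^\tau_t := (1-\tau) V^0_t + \tau V^1_t$. Since $\PX$ is a convex subset of $\R^\cX$, $\rho^\tau_t$ remains a probability density for every $t$; continuity in $t$ and local integrability of $V^\tau$ are preserved under linear combinations. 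Moreover, since the continuity equation in $(v')$ of \eqref{eq:conditions-2} is linear in $(\rho,V)$, the pair $(\rho^\tau,V^\tau)$ satisfies it in the sense of distributions, and the endpoint conditions give $\rho^\tau_0 = \rho_0^\tau$ and $\rho^\tau_1 = \rho_1^\tau$. Hence $(\rho^\tau,V^\tau) \in \CE'_1(\rho_0^\tau,\rho_1^\tau)$.

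Now apply Corollary \ref{cor:convexaction} pointwise in $t$ to get
\begin{align*}
\cA'(\rho^\tau_t,V^\tau_t) \;\leq\; (1-\tau)\,\cA'(\rho^0_t,V^0_t) + \tau\,\cA'(\rho^1_t,V^1_t)\ .
\end{align*}
Integrating over $t\in[0,1]$ and using the definition of $\cW$ via Lemma \ref{lem:smooth-reformulation} yields
\begin{align*}
\cW(\rho_0^\tau,\rho_1^\tau)^2 \;\leq\; \int_0^1 \cA'(\rho^\tau_t,V^\tau_t)\dd t
 \;\leq\; (1-\tau)\bigl[\cW(\rho_0^0,\rho_1^0)^2+\eps\bigr] + \tau\bigl[\cW(\rho_0^1,\rho_1^1)^2+\eps\bigr]\ .
\end{align*}
Letting $\eps\to 0$ gives the claim.

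There is no serious obstacle here once the right framework is in place: the whole content of the argument is the interplay between the linear formulation of the continuity equation (needed to interpolate competitors) and the joint convexity of $\alpha$ (needed to interpolate actions), both of which have been prepared in the preceding lemmas. The one point that requires a moment of care is the verification that the pair $(\rho^\tau,V^\tau)$ really lies in $\CE'_1(\rho_0^\tau,\rho_1^\tau)$ with finite action; this is where convexity of $\alpha$ (Lemma \ref{lem:convex}) is essential, since it both makes $\cA'$ well defined on the interpolated curve and guarantees that it does not blow up even when $\hat\rho^\tau_t(x,y)$ degenerates at some points.
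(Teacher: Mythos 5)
Your argument is correct and is essentially the paper's own proof: choose $\eps$-optimal pairs in $\CE'_1$ via Lemma \ref{lem:smooth-reformulation}, interpolate linearly (which is admissible because the continuity equation in \eqref{eq:conditions-2} is linear), and apply the convexity of $\cA'$ from Corollary \ref{cor:convexaction} before letting $\eps \to 0$. Nothing to add.
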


\begin{proof}
  Let $\eps > 0$. For $j = 0,1$ we may take
   a pair  $(\rho^j,V^j) \in \CE'(\rho_0^j, \rho_1^j)$ with 
  \begin{align*}
   \int_0^1 \cA'(\rho^j_t, V^j_t) \dd t \leq \cW^2(\rho_0^j, \rho_1^j) + \eps 
  \end{align*}
  in view of Lemma \ref{lem:smooth-reformulation}. For $\tau \in [0,1]$ we set
\begin{align*}
\rho_t^\tau := (1-\tau) \rho_t^0 + \tau \rho_t^1\;, \qquad
   V_t^\tau := (1-\tau)    V_t^0 + \tau    V_t^1\;.
\end{align*}
It then follows that $(\rho^\tau, V^\tau) \in \CE_1'(\rho_0^\tau,
\rho_1^\tau)$, hence
\begin{align*}
 \cW(\rho_0^\tau,\rho_1^\tau)^2 
  & \leq \int_0^1 \cA'(\rho_t^\tau, V_t^\tau) \dd t
\\& \leq (1-\tau) \int_0^1 \cA'(\rho_t^0, V_t^0) \dd t
         +  \tau  \int_0^1 \cA'(\rho_t^1, V_t^1) \dd t
\\&    = (1-\tau)  \cW(\rho_0^0,\rho_1^0)^2 
 	     +  \tau  \cW(\rho_0^1,\rho_1^1)^2
	     + \eps\;.
\end{align*}
Since $\eps > 0$ is arbitrary, this completes the proof.
\end{proof}

In this section we compare $\cW$ to some commonly used metrics. A
first result of this type (see \cite[Lemma 3.10]{Ma11}) gives a lower
bound on $\cW$ in terms of the total variation metric
\begin{align*}
  d_{TV}(\rho_0,\rho_1) = \sum_{x \in \cX} \pi(x) |\rho_0(x) - \rho_1(x)|\;.
\end{align*}
Here, more generally, we shall compare $\cW$ to various Wasserstein
distances. Given a metric $d$ on $\cX$ and $1\leq p < \infty$, recall
that the $L^p$-Wasserstein metric $W_{p,d}$ on $\PX$ is defined by
\begin{align} \label{eq:Wasserstein} W_{p,d}(\rho_0, \rho_1) :=
{\inf\bigg\{ \bigg(\sum_{x, y \in \cX} d(x,y)^p q(x,y) \bigg)^{\frac1p}\
\Big| \ q \in \Gamma(\rho_0, \rho_1) \bigg\}}\;,
\end{align}
where $\Gamma(\rho_0, \rho_1)$ denotes the set of all couplings
between $\rho_0$ and $\rho_1$, i.e.,
\begin{align*}
 \Gamma(\rho_0, \rho_1) := \bigg\{ q : \cX \times \cX \to \R_+ \ \Big| \
 &
    \sum_{y \in \cX} q(x,y) = \rho_0(x) \pi(x)\;, \   
 \\&   \sum_{x \in \cX} q(x,y) = \rho_1(y) \pi(y) \bigg\}\;.
\end{align*}
It is well known (see, e.g., \cite[Theorem 4.1]{Vil09}) that the
infimum in \eqref{eq:Wasserstein} is attained; as usual we shall
denote the collection of minimizers by $\Gamma_o(\rho_0,\rho_1)$.

In our setting there are various metrics on $\cX$ that are natural
to consider. In particular, 
\begin{itemize}
\item the \emph{graph distance} $d_g$ with respect to the graph
  stucture on $\cX$ induced by $K$ (i.e., $\{x,y\}$ is an edge iff $K(x,y)>0$).

\item the  metric $d_{\cW}$, that is, the restriction of $\cW$ from $\PX$ to 
  $\cX$
  under the identification of points in $\cX$ with the corresponding
  Dirac masses:
\begin{align*}
 d_\cW(x,y) := \cW\bigg( \frac{\one_{\{x\}} }{\pi(x)} , \frac{\one_{\{y\}} }{\pi(y)} \bigg)\;.
\end{align*}
\end{itemize}
The induced $L^p$-Wasserstein distances will be denoted by $W_{p,g}$
and $W_{p,\cW}$ respectively. 

We shall now prove lower and upper bounds for the metric $\cW$ in
terms of suitable Wasserstein metrics. We start with the lower bounds. Let us remark that, unlike most other results in this paper,   the second inequality in the following result relies on the normalisation $\sum_{y \in \cX} K(x,y) = 1$.

\begin{proposition}[Lower bounds for $\cW$]
\label{prop:Wasserstein-lowerbound}
For all probability densities $\rho_0, \rho_1 \in \PX$ we have
\begin{align}\label{eq:TotVarBound}
  \frac{1}{\sqrt{2}}d_{TV}(\rho_0,\rho_1) 
  \leq \sqrt{2} W_{1,g}(\rho_0, \rho_1) 
  \leq \cW(\rho_0, \rho_1)\;.
\end{align}
\end{proposition}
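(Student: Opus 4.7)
Proof plan. The two inequalities are independent.

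For the first inequality $\frac{1}{\sqrt 2} d_{TV}(\rho_0,\rho_1) \leq \sqrt{2}\, W_{1,g}(\rho_0,\rho_1)$, I would use the coupling definition of $W_{1,g}$. Fix any $q \in \Gamma(\rho_0,\rho_1)$. From the marginal constraints,
\begin{align*}
 d_{TV}(\rho_0,\rho_1) = \sum_x \Big|\sum_y (q(x,y) - q(y,x))\Big| \leq \sum_{x,y}|q(x,y)-q(y,x)| \leq 2\sum_{x\neq y}q(x,y).
\end{align*}
Since $d_g(x,y)\geq 1$ whenever $x\neq y$, the right-hand side is bounded by $2\sum_{x,y}d_g(x,y)q(x,y)$; infimizing over $q$ gives $d_{TV}(\rho_0,\rho_1)\leq 2 W_{1,g}(\rho_0,\rho_1)$, which is the desired bound.

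For the second inequality $\sqrt 2\, W_{1,g}(\rho_0,\rho_1)\leq \cW(\rho_0,\rho_1)$, I would appeal to the Kantorovich--Rubinstein duality
\begin{align*}
 W_{1,g}(\rho_0,\rho_1) = \sup\Big\{\sum_{x}\phi(x)(\rho_1(x)-\rho_0(x))\pi(x) \ :\ \Lip_g(\phi)\leq 1\Big\}.
\end{align*}
Fix such a $\phi$ and a pair $(\rho,\psi)\in\CE_1(\rho_0,\rho_1)$. Using the continuity equation, integration by parts and antisymmetrisation,
\begin{align*}
 \sum_x \phi(x)(\rho_1(x)-\rho_0(x))\pi(x) = \int_0^1 \ip{\nabla\phi,\nabla\psi_t}_{\rho_t}\dd t.
\end{align*}
Cauchy--Schwarz in $\cG_{\rho_t}$ bounds the integrand by $\|\nabla\phi\|_{\rho_t}\sqrt{\cA(\rho_t,\psi_t)}$, so it remains to control $\|\nabla\phi\|_{\rho_t}$ uniformly.

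The crucial step, where the graph-Lipschitz hypothesis and the normalisation $\sum_y K(x,y)=1$ enter together, is the pointwise bound $\|\nabla\phi\|_{\rho_t}^2\leq \tfrac12$. Indeed, $K(x,y)>0$ forces $d_g(x,y)\leq 1$, so $|\nabla\phi(x,y)|^2\leq 1$ on the support of $K$; combining with \eqref{eq:theta-arithmetic-ineq} and reversibility,
\begin{align*}
 \|\nabla\phi\|_{\rho_t}^2 \leq \tfrac12\sum_{x,y}\hat\rho_t(x,y)K(x,y)\pi(x) \leq \tfrac14\sum_{x,y}(\rho_t(x)+\rho_t(y))K(x,y)\pi(x) = \tfrac12.
\end{align*}
Plugging this back and applying Cauchy--Schwarz in $t$ gives $\sum_x\phi(x)(\rho_1-\rho_0)(x)\pi(x)\leq \tfrac{1}{\sqrt 2}\bigl(\int_0^1\cA(\rho_t,\psi_t)\dd t\bigr)^{1/2}$; taking the infimum over $(\rho,\psi)$ and the supremum over 1-Lipschitz $\phi$ yields $\sqrt 2\, W_{1,g}\leq \cW$. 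The main obstacle is precisely this uniform bound on $\|\nabla\phi\|_{\rho_t}$; once the arithmetic-mean bound on $\theta$ is combined with the stochasticity of $K$, everything else is standard duality and Cauchy--Schwarz.
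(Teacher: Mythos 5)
Your proposal is correct and follows essentially the same route as the paper: for the main inequality $\sqrt{2}\,W_{1,g}\leq\cW$, both arguments use Kantorovich--Rubinstein duality, the continuity equation with integration by parts, Cauchy--Schwarz in $\cG_{\rho_t}$, and the uniform bound $\|\nabla\phi\|_{\rho_t}^2\leq\tfrac12[\phi]_{\Lip}^2$ coming from $\theta(s,t)\leq\tfrac{s+t}{2}$, reversibility, and the stochasticity $\sum_y K(x,y)=1$. The only (immaterial) difference is in the first inequality, where you use an explicit coupling estimate giving $d_{TV}\leq 2W_{1,g}$, while the paper notes that $d_{TV}$ is the $W_1$-distance for the trivial metric $d_{tr}\leq d_g$, which in fact gives the stronger bound $d_{TV}\leq W_{1,g}$.
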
 

\begin{proof}
Note that $d_{tr}\leq d_g$, where $d_{tr}(x,y)=\one_{x\neq y}$ denotes the trivial distance. Therefore, the first bound follows from the fact that $d_{TV}$ is the $L^1$-Wasserstein distance induced by $d_{tr}$ (see \cite[Theorem 1.14]{Vil03}).

In order to prove the second bound, we fix $\eps > 0$, take $\bar\rho_0,
  \bar\rho_1 \in \PX$ and $(\rho,\psi) \in
  \CE_1(\bar\rho_0,\bar\rho_1)$ with
\begin{align*}
 \bigg(\int_0^1 \cA(\rho_t, \psi_t) \dd t \bigg)^{\frac12} \leq \cW(\bar\rho_0, \bar\rho_1 ) + \eps\;.
\end{align*}
Using the continuity equation from \eqref{eq:conditions} we obtain for
any $\phi : \cX \to \R$,
\begin{align*}
  \Big| &\sum_{x \in \cX} \phi(x) (\rho_0(x) - \rho_1(x)) \pi(x)\Big|
  \\&= \bigg|\int_0^1 \sum_{x \in \cX} \phi(x) \dot\rho_t(x) \pi(x)
  \dd t\bigg| \\& = \bigg|\int_0^1 \sum_{x,y \in \cX} \phi(x) \big(
  \psi_t(x) - \psi_t(y) \big) \hrho_t(x,y) K(x,y) \pi(x) \dd t\bigg|
  \\& = \bigg|\int_0^1 \ip{ \nabla \phi, \nabla \psi_t }_{\rho_t} \dd
  t\bigg| \\& \leq \bigg( \int_0^1 \| \nabla\phi\|_{\rho_t}^2\dd t
  \bigg)^{1/2} \bigg( \int_0^1 \| \nabla \psi_t \|_{\rho_t}^2 \dd t
  \bigg)^{1/2} \\& = \bigg( \int_0^1 \|\nabla\phi\|_{\rho_t}^2 \dd t
  \bigg)^{1/2} (  \cW(\bar\rho_0, \bar\rho_1) + \eps) \;.
\end{align*}
Let $[\phi]_{\Lip}$ denote the Lipschitz constant of $\phi$ with
respect to the graph distance $d_g$, i.e.,
\begin{align*}
[\phi]_{\Lip} := \sup_{x \neq y} \frac{|\phi(x) - \phi(y)|}{d_g(x,y)}\;.
\end{align*}
Applying the inequality \eqref{eq:theta-arithmetic-ineq} and using the
fact that $d_g(x,y) = 1$ if $x \neq y$ and $K(x,y) > 0$, we infer that
\begin{align*}
  \| \nabla\phi\|_{\rho_t}^2 &= \frac12 \sum_{x,y \in \cX}
  \big(\phi(x) - \phi(y)\big)^2 K(x,y) \hrho_t(x,y) \pi(x) \\& \leq
  \frac14 [\phi]_{\Lip}^2 \sum_{x,y \in \cX} K(x,y) \big( \rho_t(x) +
  \rho_t(y)\big) \pi(x) \\& = \frac12 [\phi]_{\Lip}^2 \sum_{x\in \cX}
  \rho_t(x) \pi(x) \sum_{y \in \cX}K(x,y) \\& = \frac12
  [\phi]_{\Lip}^2\;.
\end{align*}
The Kantorovich-Rubinstein Theorem (see, e.g., \cite[Theorem
1.14]{Vil03}) yields
\begin{align*}
  W_{1,g}(\bar\rho_0,\bar\rho_1) & = \sup_{\phi : [\phi]_{\Lip} \leq
    1} \Big| \sum_{x \in \cX} \phi(x) (\bar\rho_0(x) - \bar\rho_1(x))
  \pi(x)\Big| \leq \frac{\cW(\bar\rho_0, \bar\rho_1) + \eps}{\sqrt 2}\;,
\end{align*}
which completes the proof, since $\eps > 0$ is arbitrary.
\end{proof}

Before stating the upper bounds, we provide a simple relation between
$d_g$ and $d_\cW$.

\begin{lemma}\label{lem:distances}
For $x,y \in \cX$ we have 
\begin{align*}
d_\cW(x,y) \leq \frac{c}{\sqrt{k}}d_g(x,y)\;,
\end{align*}
where
\begin{align*}
c =  \int_{-1}^{1} \frac{\dd r}{\sqrt{2\theta(1-r,1+r)}} < \infty\;\qquad\text{and}\qquad
k = \min_{(x,y) \ : \ K(x,y) > 0} K(x,y)\;.
\end{align*}
If $\theta$ is the logarithmic mean, then $c\approx 1.56$.
\end{lemma}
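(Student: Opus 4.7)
The plan is to argue via the triangle inequality from a neighbour bound: I will show that for every edge $\{x,y\}$ (that is, $K(x,y) > 0$),
\[ d_\cW(x,y) \leq \frac{c}{\sqrt{K(x,y)}}. \]
Given arbitrary $x,y \in \cX$ and a shortest $d_g$-geodesic $x = x_0, x_1, \ldots, x_n = y$ with $n = d_g(x,y)$, the triangle inequality combined with $K(x_{i-1}, x_i) \geq k$ then yields $d_\cW(x,y) \leq c\, d_g(x,y)/\sqrt{k}$, as claimed.

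Fix an edge $\{x,y\}$. Since $\cW$ is symmetric we may assume without loss of generality that $\pi(x) \geq \pi(y)$. Consider the curve $\rho : [0,1] \to \PX$ supported on $\{x,y\}$ given by
\[
\rho_t(x) = \frac{1-s(t)}{\pi(x)}, \qquad \rho_t(y) = \frac{s(t)}{\pi(y)},
\]
where $s \in C^\infty([0,1], [0,1])$ is increasing with $s(0) = 0$, $s(1) = 1$; then $\rho_0 = \one_{\{x\}}/\pi(x)$ and $\rho_1 = \one_{\{y\}}/\pi(y)$. Assumption (A4) gives $\hat\rho_t(x,z) = 0$ whenever $z \notin \{x,y\}$, so only the edge $\{x,y\}$ contributes to the continuity equation in \eqref{eq:conditions}; solving at the vertex $x$ determines
\[
\psi_t(y) - \psi_t(x) = \frac{\dot s(t)}{\pi(x)\, \hat\rho_t(x,y)\, K(x,y)},
\]
while the values of $\psi_t$ at the remaining vertices are irrelevant.

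Combining the two edge contributions to $\cA(\rho_t, \psi_t)$ via detailed balance, and using the homogeneity (A6) of $\theta$ to write $\theta((1-s)/\pi(x), s/\pi(y)) = \pi(x)^{-1}\theta(1-s, s\pi(x)/\pi(y))$, a direct computation yields
\[
\cA(\rho_t, \psi_t) = \frac{\dot s(t)^2}{K(x,y)\, \theta\bigl(1-s(t),\, s(t)\pi(x)/\pi(y)\bigr)}.
\]
The decisive point is to exploit $\pi(x)/\pi(y) \geq 1$: monotonicity (A5) gives $\theta(1-s, s\pi(x)/\pi(y)) \geq \theta(1-s, s)$, and hence
\[
\int_0^1 \cA(\rho_t, \psi_t) \dd t \leq \frac{1}{K(x,y)} \int_0^1 \frac{\dot s(t)^2}{\theta(1-s(t), s(t))} \dd t.
\]
Minimising the right-hand side over admissible $s(\cdot)$ by the usual Cauchy--Schwarz / arc-length argument gives $\bigl(\int_0^1 \dd s/\sqrt{\theta(1-s,s)}\bigr)^2/K(x,y)$. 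The substitution $r = 2s-1$ together with the homogeneity identity $\theta(2(1-s), 2s) = 2\theta(1-s,s)$ identifies this integral with the constant $c$ from the statement, and the neighbour bound $d_\cW(x,y)^2 \leq c^2/K(x,y)$ follows.

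The main conceptual obstacle is keeping the estimate uniform when $\pi$ is far from uniform: homogeneity of $\theta$ confines all the $\pi$-dependence of $\cA$ to the single ratio $\pi(x)/\pi(y)$, which is then dominated in the correct direction by monotonicity thanks to the symmetry reduction $\pi(x) \geq \pi(y)$. Finiteness of $c$ for general admissible $\theta$ is a byproduct of the assumptions: concavity (A7) and $\theta(1,1) = 1$ yield $\theta(\eps, 1) \geq \eps$, and then (A5) gives $\theta(\eps, 2-\eps) \geq \eps$ for $\eps \in [0,1]$, so the singularities of $1/\sqrt{\theta(1-r, 1+r)}$ at $r = \pm 1$ are integrable. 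The numerical value $c \approx 1.56$ in the logarithmic-mean case is obtained by a direct quadrature.
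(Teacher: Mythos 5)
Your proof is correct, and at the structural level it takes the same route as the paper: an estimate for neighbouring points followed by the triangle inequality along a graph geodesic, with the infimum over connecting paths giving $d_g(x,y)$. The difference lies in the neighbour bound. The paper simply quotes the two-point analysis of \cite{Ma11} (Theorem 2.4 and Lemma 3.14 there), which gives $d_\cW(x,y)\le c\sqrt{\max\{\pi(x),\pi(y)\}/(K(x,y)\pi(x))}\le c/\sqrt{k}$, whereas you rederive this estimate from scratch: you exhibit an explicit curve supported on the edge, use (A4) to see that only that edge enters the continuity equation, compute the action, and then use homogeneity (A6) to confine the $\pi$-dependence to the ratio $\pi(x)/\pi(y)$, which after the reduction $\pi(x)\ge\pi(y)$ is absorbed by monotonicity (A5); the remaining one-dimensional problem is the symmetric two-point cost, identified with $c$ by the substitution $r=2s-1$. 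Note that what your computation actually proves is $d_\cW(x,y)\le c/\sqrt{\min\{K(x,y),K(y,x)\}}$ (the WLOG relabels the endpoints, and by detailed balance the smaller rate sits on the heavier vertex), which coincides with the bound imported from \cite{Ma11} and of course suffices since both rates are at least $k$. Your argument for the finiteness of $c$ (concavity plus (A5)) and your observation that the continuity equation is only required on the open interval are both fine. One small technical gloss: the arc-length-parametrised profile $s$ that attains the Cauchy--Schwarz equality satisfies $\dot s = c\sqrt{\theta(1-s,s)}$ and is in general only $C^1$ at the endpoints, so it need not be admissible for Definition \ref{def:metric} as stated; either settle for a smooth $s$ whose cost is within $\eps$ of the optimum (e.g.\ by freezing the endpoints and mollifying, as in the proof of Lemma \ref{lem:smooth-reformulation}), or work directly in the relaxed class $\CE_1'$ of that lemma. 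This is routine and does not affect the argument; overall your proof has the advantage of being self-contained, at the cost of redoing the two-point computation that the paper delegates to \cite{Ma11}.
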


\begin{proof}
  Let $\{x_i\}_{i=0}^n$ be a sequence in $\cX$ with $x_0 = x$, $x_n =
  y$ and $K(x_i, x_{i+1}) > 0$ for all $i$.  We shall use the fact,
  proved in \cite[Theorem 2.4]{Ma11}, that the
  $\cW$-distance between two Dirac measures on a two-point space $\{a,
  b\}$ with transition probabilities $K(a,b) = K(b,a) = p$ is equal to
  $\frac{c}{\sqrt{p}}$. The concavity of $\theta$ readily implies that
  $c$ is finite. Furthermore, it follows from \cite[Lemma 3.14]{Ma11}
  and its proof, that for any pair $x, y \in \cX$ with $K(x,y) > 0$,
  one has
\begin{align*}
 \cW\bigg(\frac{\one_{\{x\}} }{\pi(x)}, \frac{\one_{\{y\}} }{\pi(y)}\bigg)
  \leq c  \sqrt{\frac{\max\{\pi(x),\pi(y)\} }{K(x, y)\pi(x)}}
   \leq   \frac{c}{\sqrt{k}}\;.
\end{align*}
Using the triangle inequality for $\cW$ we obtain
\begin{align*}
d_\cW(x,y) 
   & = \cW\bigg( \frac{\one_{\{x\}} }{\pi(x)} , \frac{\one_{\{y\}} }{\pi(y)} \bigg)
  \leq \sum_{i = 0}^{n-1}
    \cW\bigg( \frac{\one_{\{x_i\}} }{\pi(x_i)} , \frac{\one_{\{x_{i+1}\}} }{\pi(x_{i+1})} \bigg)
  \leq  \frac{nc}{\sqrt{k}}\;,
\end{align*}
hence the result follows by taking the infimum over all such sequences
$\{x_i\}_{i=0}^n$.
\end{proof}

Now we turn to upper bounds for $\cW$ in terms of $L^2$-Wasserstein
distances.

\begin{proposition}[Upper bounds for $\cW$] \label{prop:Wasserstein-upperbound}
For all probability densities $\rho_0, \rho_1 \in \PX$ we have
\begin{align}
 \label{eq:Wasserstein-bounds}
  \cW(\rho_0, \rho_1)
   \leq W_{2,\cW}(\rho_0, \rho_1)
   \leq \frac{c}{\sqrt{k}}W_{2,g}(\rho_0,\rho_1) \ ,
\end{align}
where $c$ and $k$ are as in Lemma \ref{lem:distances}.
\end{proposition}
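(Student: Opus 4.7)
The plan is to prove the two inequalities separately, the second being immediate from Lemma \ref{lem:distances} and the first requiring a ``gluing'' argument based on the convexity of the action $\cA'$ from Corollary \ref{cor:convexaction}.

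For the second inequality, given any coupling $q \in \Gamma(\rho_0, \rho_1)$, Lemma \ref{lem:distances} gives $d_\cW(x,y)^2 \leq \frac{c^2}{k} d_g(x,y)^2$ pointwise. Summing against $q$ and taking square roots yields $W_{2,\cW}(\rho_0, \rho_1) \leq \frac{c}{\sqrt{k}} W_{2,g}(\rho_0, \rho_1)$ once one optimises over $q$; in particular one picks an optimal coupling for $W_{2,g}$, which exists by \cite[Theorem 4.1]{Vil09}.

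For the first inequality, I fix $\eps > 0$ and select an optimal coupling $q \in \Gamma_o(\rho_0, \rho_1)$ for $W_{2,\cW}$, so that $W_{2,\cW}(\rho_0, \rho_1)^2 = \sum_{x,y} d_\cW(x,y)^2 q(x,y)$. For every pair $(x,y)$ with $q(x,y) > 0$, I use Lemma \ref{lem:smooth-reformulation} to choose a pair $(\rho^{xy}, V^{xy}) \in \CE'_1\big(\frac{\one_{\{x\}}}{\pi(x)}, \frac{\one_{\{y\}}}{\pi(y)}\big)$ with
\begin{align*}
  \int_0^1 \cA'(\rho^{xy}_t, V^{xy}_t) \dd t \leq d_\cW(x,y)^2 + \eps.
\end{align*}
I then glue these curves by setting
\begin{align*}
  \rho_t := \sum_{x,y} q(x,y) \rho^{xy}_t, \qquad V_t := \sum_{x,y} q(x,y) V^{xy}_t.
\end{align*}
The marginal conditions on $q$ give $\rho_0 = \bar\rho_0$ and $\rho_1 = \bar\rho_1$, and since the continuity equation $(v')$ is linear in $(\rho, V)$, the pair $(\rho, V)$ lies in $\CE'_1(\bar\rho_0, \bar\rho_1)$. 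Applying Corollary \ref{cor:convexaction} iteratively (or, equivalently, using the joint convexity of $\alpha$ from Lemma \ref{lem:convex}) yields
\begin{align*}
  \cA'(\rho_t, V_t) \leq \sum_{x,y} q(x,y) \cA'(\rho^{xy}_t, V^{xy}_t).
\end{align*}
Integrating in $t$ and invoking the reformulation from Lemma \ref{lem:smooth-reformulation}, I obtain
\begin{align*}
  \cW(\rho_0, \rho_1)^2 \leq \int_0^1 \cA'(\rho_t, V_t) \dd t \leq \sum_{x,y} q(x,y) d_\cW(x,y)^2 + \eps = W_{2,\cW}(\rho_0, \rho_1)^2 + \eps.
\end{align*}
Sending $\eps \to 0$ concludes the proof.

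The only subtle point is the gluing: one must verify that convex combinations of solutions of the continuity equation $(v')$ are again solutions with the correct endpoints, and that the action inequality really passes through the sum over $(x,y)$. Both facts are direct because $(v')$ is linear in $(\rho, V)$ and because $\cA'$ is defined via the jointly convex integrand $\alpha$. This is precisely why the equivalent formulation via $\CE'_1$ from Lemma \ref{lem:smooth-reformulation} is crucial here; the original formulation in terms of potentials $\psi$ is not linear in the unknowns and would not allow such a direct superposition.
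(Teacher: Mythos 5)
Your proof is correct and follows essentially the same route as the paper: the second inequality comes directly from Lemma \ref{lem:distances}, and the first is obtained by gluing $\eps$-minimizing curves in $\CE'_1$ between Dirac masses, weighted by an optimal coupling for $W_{2,\cW}$, and then using the convexity of $\cA'$ (Lemma \ref{lem:convex} / Corollary \ref{cor:convexaction}). No gaps; your closing remark about why the linear formulation via $(\rho,V)$ is essential for the superposition matches the paper's reasoning exactly.
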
 

\begin{proof}
  We shall prove the first bound, the second one being an immediate
  consequence of Lemma \ref{lem:distances}. For this purpose, we fix
  $\bar\rho_0, \bar\rho_1 \in \PX$ and take $q \in
  \Gamma_o(\bar\rho_0, \bar\rho_1)$. For all $u,v \in \cX$, take a
  curve $(\rho^{u,v}, V^{u,v}) \in \CE'\big(\frac{\one_{\{u\}}
  }{\pi(u)} , \frac{\one_{\{v\}} }{\pi(v)}\big)$ with
\begin{align*}
 \int_0^1 \cA'(\rho_t^{u,v}, V_t^{u,v}) \dd t \leq 
   d_\cW(u,v)^2 + \eps\;,
\end{align*}
and consider the convex combination of these curves, weighted
according to the optimal plan $q$, i.e.,
\begin{align*}
  \rho_t := \sum_{u,v \in \cX} q(u,v) \rho_t^{u,v}\;,\qquad
     V_t := \sum_{u,v \in \cX} q(u,v) V_t^{u,v}\;. 
\end{align*}
It then follows that the resulting curve $(\rho, V)$ belongs to
$\CE_1'(\bar\rho_0, \bar\rho_1)$. Using the convexity result from
Lemma \ref{lem:convex} we infer that
\begin{align*}
 \cW(\bar\rho_0, \bar\rho_1)^2 
    \leq \int_0^1 \cA'(\rho_t, V_t) \dd t
  & \leq \sum_{u,v \in \cX} q(u,v) 
			\int_0^1 \cA'(\rho_t^{u,v}, V_t^{u,v}) \dd t
\\&    \leq \sum_{u,v \in \cX}  q(u,v) (  d_\cW(u,v)^2 + \eps)
\\&    =  W_{2,\cW}(\bar\rho_0, \bar\rho_1)^2 + \eps\;.
\end{align*}
which implies the result.
\end{proof}

\section{Geodesics}
\label{sec:geodesics}

In this section we show that the metric space $(\PX,\cW)$ is a
geodesic space, in the sense that any two densities $\rho_0,\rho_1 \in
\PX$ can be connected by a \emph{(constant speed) geodesic}, i.e., a
curve $\gamma : [0,1] \to \PX$ satisfying
\begin{align*}
 \cW(\gamma_s, \gamma_t) = |s-t| \cW(\gamma_0, \gamma_1)\
\end{align*}
for all $0 \leq s, t\leq 1$.

Let us first give an equivalent characterisation of the infimum in
Lemma \ref{lem:smooth-reformulation}, which is invariant
under reparametrisation.

\begin{lemma}\label{lem:reparametrization}
For any $T>0$ and $\bar\rho_0,\bar\rho_1\in\PX$ we have
\begin{equation}\label{eq:reparametrization}
\cW(\bar\rho_0,\bar\rho_1)~=~\inf\left\{\int_0^T\sqrt{\cA'(\rho_t,V_t)}dt\ : \ (\rho,V)\in\CE'_{T}(\bar\rho_0,\bar\rho_1)\right\}\ .
\end{equation}
\end{lemma}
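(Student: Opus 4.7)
The plan is to exploit the fact that $\sqrt{\cA'(\rho,V)}$ is positively $1$-homogeneous in $V$, which makes $\int_0^T\sqrt{\cA'(\rho_t,V_t)}\dd t$ invariant under time reparametrisation. This casts the right-hand side of \eqref{eq:reparametrization} as a ``length'' functional and $\cW^2$ as a corresponding ``energy,'' linked by Cauchy--Schwarz. I will prove the two inequalities separately, using Lemma~\ref{lem:smooth-reformulation} to switch freely to the $\CE'$ formulation.

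For the inequality ``$\cW(\bar\rho_0,\bar\rho_1)\leq$ RHS'' I take any $(\rho,V)\in\CE'_T(\bar\rho_0,\bar\rho_1)$ with $L:=\int_0^T\sqrt{\cA'(\rho_t,V_t)}\dd t<\infty$ and reparametrise by arc length. Let $\ell(t):=\int_0^t\sqrt{\cA'(\rho_r,V_r)}\dd r$. On maximal intervals where $\ell$ is constant one has $\cA'(\rho_t,V_t)=0$, which by the definition of $\alpha$ forces $V_t(x,y)=0$ for all $x,y\in\cX$; the continuity equation then says $\rho_t$ is constant on such intervals, so we may excise them without affecting the endpoints or $L$, thereby assuming $\ell$ strictly increasing. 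Its inverse $\tau\colon[0,L]\to[0,T]$ is absolutely continuous, and setting $\tilde\rho_s:=\rho_{\tau(s)}$, $\tilde V_s:=\tau'(s)V_{\tau(s)}$ yields, by the chain rule, $(\tilde\rho,\tilde V)\in\CE'_L(\bar\rho_0,\bar\rho_1)$ with $\cA'(\tilde\rho_s,\tilde V_s)=\tau'(s)^2\cA'(\rho_{\tau(s)},V_{\tau(s)})=1$ for a.e.\ $s$ (using $\tau'(s)=1/\sqrt{\cA'(\rho_{\tau(s)},V_{\tau(s)})}$ and the quadratic homogeneity of $\cA'$ in $V$). A final affine rescaling $\hat\rho_u:=\tilde\rho_{Lu}$, $\hat V_u:=L\tilde V_{Lu}$ puts the curve into $\CE'_1(\bar\rho_0,\bar\rho_1)$ with $\cA'(\hat\rho_u,\hat V_u)=L^2$, so Lemma~\ref{lem:smooth-reformulation} gives $\cW(\bar\rho_0,\bar\rho_1)^2\leq\int_0^1\cA'(\hat\rho_u,\hat V_u)\dd u=L^2$. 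Taking infimum over $(\rho,V)$ yields the desired bound.

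For the converse ``RHS $\leq\cW(\bar\rho_0,\bar\rho_1)$'' I fix $\eps>0$ and, via Lemma~\ref{lem:smooth-reformulation}, pick $(\rho,V)\in\CE'_1(\bar\rho_0,\bar\rho_1)$ with $\int_0^1\cA'(\rho_t,V_t)\dd t\leq\cW(\bar\rho_0,\bar\rho_1)^2+\eps$. Rescaling to $[0,T]$ by $\tilde\rho_s:=\rho_{s/T}$, $\tilde V_s:=V_{s/T}/T$ produces an element of $\CE'_T(\bar\rho_0,\bar\rho_1)$, and the quadratic homogeneity of $\cA'$ together with the change of variables $u=s/T$ gives
\begin{equation*}
\int_0^T\sqrt{\cA'(\tilde\rho_s,\tilde V_s)}\dd s=\int_0^1\sqrt{\cA'(\rho_t,V_t)}\dd t\leq\sqrt{\int_0^1\cA'(\rho_t,V_t)\dd t}\leq\sqrt{\cW(\bar\rho_0,\bar\rho_1)^2+\eps},
\end{equation*}
the middle step being Cauchy--Schwarz on $[0,1]$. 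Letting $\eps\downarrow0$ finishes the proof.

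The one nontrivial point is the ``pause removal'' step in the first direction: one must verify that $\cA'(\rho_t,V_t)=0$ really does force $V_t(x,y)=0$ for every pair $(x,y)$, which is where the convention $\alpha(x,s,t)=+\infty$ whenever $\theta(s,t)=0$ and $x\neq 0$ enters decisively. Once this is granted, the two reparametrisations are routine consequences of the homogeneity of $\cA'$ in $V$ and the linearity of the continuity equation in both $\rho$ and $V$.
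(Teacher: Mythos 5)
Your overall strategy is exactly the ``standard reparametrisation argument'' that the paper invokes (citing AGS, Lemma~1.1.4 and DNS, Theorem~5.4) on top of Lemma~\ref{lem:smooth-reformulation}, and the easy direction is fine: rescaling a near-minimiser from $[0,1]$ to $[0,T]$ and applying Cauchy--Schwarz correctly gives ``RHS $\leq \cW$''. The problem is in the other direction, at precisely the step you identify as nontrivial but resolve incorrectly. Excising the maximal intervals on which $\ell(t)=\int_0^t\sqrt{\cA'(\rho_r,V_r)}\,\dd r$ is constant does \emph{not} make $\ell$ strictly increasing with absolutely continuous inverse. The speed $\sqrt{\cA'(\rho_t,V_t)}$ can vanish on a compact set $A\subset[0,T]$ of positive Lebesgue measure with empty interior (e.g.\ take any competitor and slow it down through an absolutely continuous time change whose derivative vanishes exactly on a fat Cantor set); then there is no interval of constancy to excise, $\ell$ is strictly increasing, but $\ell'=0$ on $A$, so $\tau=\ell^{-1}$ maps the Lebesgue-null set $\ell(A)$ onto the positive-measure set $A$ and is therefore not absolutely continuous. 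At that point the chain-rule definition $\tilde V_s=\tau'(s)V_{\tau(s)}$, the verification of the distributional continuity equation for $(\tilde\rho,\tilde V)$, and the identity $\cA'(\tilde\rho_s,\tilde V_s)=1$ a.e.\ are all unjustified, so your argument does not cover all competitors $(\rho,V)\in\CE'_T(\bar\rho_0,\bar\rho_1)$, which is what this inequality requires.

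The standard repair is to avoid exact arc-length parametrisation: set $\ell_\eps(t)=\eps t+\int_0^t\sqrt{\cA'(\rho_r,V_r)}\,\dd r$, which is strictly increasing with $\ell_\eps'\geq\eps$, so its inverse $\tau_\eps$ is Lipschitz; defining $\tilde\rho_s=\rho_{\tau_\eps(s)}$ and $\tilde V_s=\tau_\eps'(s)V_{\tau_\eps(s)}$ gives a pair in $\CE'_{L_\eps}(\bar\rho_0,\bar\rho_1)$ with $L_\eps=L+\eps T$ and $\cA'(\tilde\rho_s,\tilde V_s)\leq 1$ a.e.\ (by the $2$-homogeneity of $\cA'$ in $V$), and rescaling to $[0,1]$ yields $\cW(\bar\rho_0,\bar\rho_1)^2\leq L_\eps^2$; letting $\eps\downarrow 0$ gives $\cW\leq L$. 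This is how the references cited in the paper's one-line proof handle the degeneracy. A further small imprecision: $\cA'(\rho_t,V_t)=0$ forces $V_t(x,y)=0$ only on pairs with $K(x,y)\pi(x)>0$, not for all $x,y$; this is harmless since one may assume $V_t(x,y)=0$ whenever $K(x,y)=0$, as the paper does elsewhere, but it should be said.
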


\begin{proof}
  Taking Lemma \ref{lem:smooth-reformulation} into account, this
  follows from a standard re\-pa\-ra\-me\-tri\-sa\-tion argument. See
  \cite[Lemma 1.1.4]{AGS08} or \cite[Theorem 5.4]{DNS09} for details in
  similar situations.
\end{proof}

\begin{theorem}\label{thm:existenceminimizers}
  For all $\bar\rho_0, \bar\rho_1 \in \PX$ the infimum in Lemma
  \ref{lem:smooth-reformulation} is attained by a pair
  $(\rho,V)\in\CE'_1(\bar\rho_0, \bar\rho_1)$ satisfying
  $\cA'(\rho_t,V_t)=\cW(\bar\rho_0, \bar\rho_1)^2$ for
  a.e. $t\in[0,1]$. In particular, the curve $(\rho_t)_{t\in[0,1]}$ is
  a constant speed geodesic.
\end{theorem}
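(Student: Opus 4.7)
The plan is to apply the direct method of the calculus of variations to Lemma \ref{lem:smooth-reformulation}: extract a weakly convergent subsequence from a minimizing sequence, use lower semicontinuity of the action to identify the limit as a minimizer, and finally upgrade to constant speed parametrisation via Cauchy--Schwarz.

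First, I would pick a minimizing sequence $(\rho^n,V^n)\in\CE'_1(\bar\rho_0,\bar\rho_1)$. By the reparametrisation invariance of Lemma \ref{lem:reparametrization} (applied with $T=1$) combined with a standard time-change, I may assume each element already has approximately constant speed, so that $\cA'(\rho^n_t,V^n_t)\leq C^2$ uniformly in $t$ and $n$ for some finite $C$. I may also assume $V^n_t(x,y)=0$ whenever $K(x,y)=0$, since removing the component of $V$ not paired with an edge of the kernel does not affect either the action or the continuity equation. Since $\rho^n_t(x)\leq 1/\min_x\pi(x)$ for all $t$ and \eqref{eq:theta-arithmetic-ineq} gives $\theta(\rho^n_t(x),\rho^n_t(y))\leq 1/\min_x\pi(x)$, the bound $\cA'(\rho^n_t,V^n_t)\leq C^2$ yields a uniform $L^\infty([0,1];L^2(\cX\times\cX))$ bound on $V^n$ on the support of $K$.

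Next, compactness. Weak $L^2([0,1]\times\cX\times\cX)$ compactness gives, along a subsequence, $V^n\rightharpoonup V$. The continuity equation $\dot\rho^n = -\nabla\cdot V^n$ combined with the $L^2$ bound on $V^n$ provides the $\frac12$-Hölder estimate $|\rho^n_t(x)-\rho^n_s(x)|\leq C'\sqrt{|t-s|}$; by Arzel\`a--Ascoli, a further subsequence of $\rho^n$ converges uniformly on $[0,1]$ to a continuous curve $\rho:[0,1]\to\PX$. The limit $(\rho,V)$ lies in $\CE'_1(\bar\rho_0,\bar\rho_1)$, since boundary data pass to the limit by uniform convergence and the continuity equation is linear in $(\rho,V)$, hence stable under these modes of convergence.

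For optimality, I invoke lower semicontinuity of the integral functional
\begin{align*}
(\rho,V)\mapsto \int_0^1\cA'(\rho_t,V_t)\dd t=\frac12\int_0^1\sum_{x,y}\alpha(V_t(x,y),\rho_t(x),\rho_t(y))K(x,y)\pi(x)\dd t\ ,
\end{align*}
under uniform convergence of $\rho^n$ and weak $L^2$ convergence of $V^n$. This is the classical Ioffe-type theorem for convex, lower semicontinuous integrands, whose hypothesis is exactly Lemma \ref{lem:convex}. Consequently
\begin{align*}
\int_0^1\cA'(\rho_t,V_t)\dd t~\leq~\liminf_{n\to\infty}\int_0^1\cA'(\rho^n_t,V^n_t)\dd t~=~\cW(\bar\rho_0,\bar\rho_1)^2\ ,
\end{align*}
so $(\rho,V)$ is a minimizer.

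Finally I would verify the constant-speed property. Writing $a(t):=\sqrt{\cA'(\rho_t,V_t)}$, Lemma \ref{lem:reparametrization} and Cauchy--Schwarz yield
\begin{align*}
\cW(\bar\rho_0,\bar\rho_1)~\leq~\int_0^1 a(t)\dd t~\leq~\Big(\int_0^1 a(t)^2\dd t\Big)^{1/2}~=~\cW(\bar\rho_0,\bar\rho_1)\ ,
\end{align*}
forcing equality in Cauchy--Schwarz and hence $a(t)\equiv\cW(\bar\rho_0,\bar\rho_1)$ for a.e. $t$. The geodesic identity $\cW(\rho_s,\rho_t)=|s-t|\cW(\bar\rho_0,\bar\rho_1)$ then follows from the upper bound $\cW(\rho_s,\rho_t)\leq\int_s^t a(r)\dd r$ applied to each of the three subintervals $[0,s],[s,t],[t,1]$, together with the triangle inequality. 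The main technical obstacle is the lower semicontinuity step, which requires care because $\alpha$ takes the value $+\infty$ when $\theta(s,t)=0$ and $x\neq 0$; however the convexity and joint lower semicontinuity established in Lemma \ref{lem:convex} are precisely what is needed for the Ioffe-type result to apply verbatim.
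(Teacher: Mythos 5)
Your argument is correct, and its skeleton (direct method on the $\CE'_1$-formulation, convexity/lower semicontinuity of $\alpha$, then the reparametrisation lemma plus Cauchy--Schwarz to force constant speed) is the same as the paper's; the final constant-speed and geodesic steps coincide with the paper's almost verbatim. Where you genuinely diverge is the compactness step. The paper keeps the raw minimizing sequence, with only the integral bound $\int_0^1\cA'(\rho^n_t,V^n_t)\dd t\leq C$, and packages $V^n$ into signed measures $\nu^n_{x,y}(\mathrm{d}t)=V^n_t(x,y)\dd t$: a total-variation estimate uniform over Borel sets gives weak* compactness \emph{and} absolute continuity of the limit, and lower semicontinuity is then taken from Buttazzo's theorem for integral functionals of measures. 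You instead first normalise the minimizing sequence to (approximately) constant speed, which upgrades the integral bound to a pointwise-in-$t$ bound $\cA'(\rho^n_t,V^n_t)\leq C^2$; this buys you uniform $L^2$ bounds on $V^n$, weak $L^2$ compactness, H\"older equicontinuity of $\rho^n$, and an Ioffe-type lower semicontinuity theorem in a more standard functional-analytic setting. Both routes are sound. The one step you should spell out a bit more is the initial normalisation: you need that the Lipschitz time-change keeps the pair in $\CE'_1$ (the continuity equation in the distributional sense is preserved, with $\tilde V_s=V_{t(s)}\,t'(s)$) and that the reparametrised sequence is still minimizing for the \emph{action} (which follows since its speed is a.e.\ bounded by roughly $\cW(\bar\rho_0,\bar\rho_1)+\eps$, so its action is at most $(\cW(\bar\rho_0,\bar\rho_1)+\eps)^2$); this is the same standard argument underlying Lemma \ref{lem:reparametrization}, so it is fine, but it is doing real work in your proof. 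A minor advantage of the paper's measure-theoretic route is that it transfers directly to the compactness argument needed in Proposition \ref{prop:metricderivative}, where no constant-speed normalisation is available.
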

\begin{proof}
  We will show existence of a minimizing curve by a direct
  argument. Let $(\rho^n,V^n) \in \CE_1'(\bar\rho_0, \bar\rho_1)$ be a
  minimizing sequence. Thus we can assume that
  \begin{align*}
    \sup\limits_n \int_0^1\cA'(\rho^n_t,V^n_t)\dd t~<~C
  \end{align*}
  for some finite constant $C$. Without loss of generality we assume
  that $V^n_t(x,y)=0$ when $K(x,y)=0$. For $x,y\in\cX$, define the
  sequence of signed Borel measures $\nu^n_{x,y}$ on $[0,1]$ by
  $\nu^n_{x,y}(\mathrm{d}t):=V^n_t(x,y)\dd t$. For every Borel set
  $B\subset [0,1]$ we can give the following bound on the total
  variation of these measures:
  \begin{align*}
    \norm{\nu^n_{x,y}}(B)~&\leq~\int_B\abs{V^n_t(x,y)}\dd
    t\leq~\sqrt{C'}\int_B \sqrt{\alpha(V^n_t(x,y),\rho^n_t(x),
      \rho^n_t(y))}\dd t\ ,
  \end{align*}
  where we used the fact that $\rho(x)\leq
  \max\{\pi(z)^{-1}:z\in\cX\}=:C' < \infty$ for $\rho\in\PX$. Using
  H\"older's inequality we obtain
  \begin{align}\label{eq:totalvar-estimate}
    \sum\limits_{x,y\in\cX}\norm{\nu^n_{x,y}}(B)K(x,y)\pi(x)~&\leq~
    \sqrt{2C'\Leb(B)}
    \left(\int_0^1\cA'(\rho^n_t,V^n_t)\dd t\right)^\frac12\nonumber\\
    &\leq~\sqrt{2 C C' \Leb(B)}\ .
  \end{align}
  In particular, the total variation of the measures $\nu_{x,y}^n$ is
  bounded uniformly in $n$. Hence we can extract a subsequence (still
  indexed by $n$) such that for all $x,y\in\cX$ the measures
  $\nu^n_{x,y}$ converge weakly* to some finite signed Borel measure
  $\nu_{x,y}$. The estimate \eqref{eq:totalvar-estimate} also shows
  that $\nu_{x,y}$ is absolutely continuous with respect to the
  Lebesgue measure. Thus there exists $V:[0,1]\to\R^{\cX\times\cX}$
  such that $\nu_{x,y}(\mathrm{d} t):=V_t(x,y)\dd t$. We claim that,
  along the same subsequence, $\rho^n$ converges pointwise to a
  function $\rho:[0,1]\to\PX$. Indeed, using the continuity of $t
  \mapsto \rho^n_t$ one derives from the continuity equation $(v')$ in
  \eqref{eq:conditions-2} that for $s\in[0,1]$ and every $x\in\cX$,
  \begin{align}\label{eq:ce-boundary}
    \rho^n_{s}-\rho^n_0~=~
    \frac12\int\limits_0^{s}\sum\limits_{y\in \cX}(V^n_t(y,x) - V^n_t(x,y))K(x,y)\dd t\ .
  \end{align}
  The weak* convergence of $\nu^n_{x,y}$ implies (see
  \cite[Prop. 5.1.10]{AGS08}) the convergence of the right hand side
  of \eqref{eq:ce-boundary}. Since $\rho^n_0=\bar\rho_0$ for all $n$,
  this yields the desired convergence of $\rho^n_s$ for all $s$, and one
  easily checks that $(\rho,V)\in\CE_1'(\rho_0,\rho_1)$. The weak*
  convergence of $\nu_{x,y}^n$ further implies that the measures
  $\rho^n_t(x)\mathrm{d} t$ converge weakly* to $\rho_t(x)\mathrm{d}
  t$. Applying a general result on the lower-semicontinuity of integral 
  functionals (see
  \cite[Thm. 3.4.3]{But89}) and taking into account Lemma
  \ref{lem:convex}, we obtain
  \begin{align*}
    \int_0^1\cA'(\rho_t,V_t)\dd t~\leq~\liminf\limits_n \int_0^1\cA'(\rho^n_t,V^n_t)\dd t~=~\cW(\bar\rho_0,\bar\rho_1)^2\ .
  \end{align*}
  Hence the pair $(\rho,V)$ is a minimizer of the variational problem
  in the definition of $\cW$. Finally, Lemma \ref
  {lem:reparametrization}  yields
  \begin{align*}
   \int_0^1\sqrt{\cA'(\rho_t,V_t)}\dd t~\geq~\cW(\bar\rho_0,\bar\rho_1)~=~\left(\int_0^1\cA'(\rho_t,V_t)\dd t\right)^{\frac{1}{2}}~\;,
  \end{align*}
  which implies that $\cA'(\rho_t,V_t)=W(\bar\rho_0,\bar\rho_1)^2$ for
  a.e. $t\in[0,1]$.
  
  The fact that $(\rho_t)_t$ is a constant speed geodesic follows now
  by another application of Lemma \ref{lem:reparametrization}.
\end{proof}

We shall now give a characterisation of absolutely continuous curves in the
metric space $(\PX,\cW)$ and relate their length to their minimal
action. First we recall some notions from the theory of analysis in
metric spaces. A curve $(\rho_t)_{t\in[0,T]}$ in $\PX$ is called
\emph{absolutely continuous w.r.t. $\cW$} if there exists $m\in
L^1(0,T)$ such that
\begin{align*}
  \cW(\rho_s,\rho_t)~\leq~\int_s^tm(r)\dd r \quad\text{for all}~0\leq s\leq
  t\leq T\ .
\end{align*}
If $(\rho_t)$ is absolutely continuous, then its \emph{metric derivative} 
\begin{equation*}
\abs{\rho_t'}~:=~\lim\limits_{h\to0}\frac{\cW(\rho_{t+h},\rho_t)}{\abs{h}}
\end{equation*}
exists for a.e. $t\in[0,T]$ and satisfies $\abs{\rho_t'}\leq m(t)$ a.e.
(see \cite[Theorem 1.1.2]{AGS08}). 

\begin{proposition}[Metric velocity]\label{prop:metricderivative}
  A curve $(\rho_t)_{t\in[0,T]}$ is absolutely continuous with respect
  to $\cW$ if and only if there exists a measurable function
  $V:[0,T]\to\R^{\cX\times\cX}$ such that $(\rho,V)\in\CE_{T}'(\rho_0, \rho_T)$ and
  \begin{align*}
    \int_0^T\sqrt{\cA'(\rho_t,V_t)}dt~<~\infty\ .
  \end{align*}
  In this case we have $\abs{\rho_t'}^2\leq\cA'(\rho_t,V_t)$ for
  a.e. $t\in[0,T]$ and there exists an a.e. uniquely defined function
  $\tilde{V} : [0,1] \to \R^{\cX \times \cX}$ such that $(\rho,\tilde
  V)\in\CE_{T}'(\rho_0, \rho_T)$ and $\abs{\rho_t'}^2=\cA'(\rho_t,\tilde{V}_t)$ for
  a.e. $t\in[0,T]$.
\end{proposition}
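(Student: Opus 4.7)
The plan is to mirror the classical metric-velocity argument of \cite[Theorem 8.3.1]{AGS08} using the compactness machinery already developed in the proof of Theorem~\ref{thm:existenceminimizers}. \emph{Easy direction:} given $(\rho,V)\in\CE'_T(\rho_0,\rho_T)$ with $\int_0^T\sqrt{\cA'(\rho_t,V_t)}\dd t<\infty$, I restrict $(\rho,V)$ to any subinterval $[s,t]$ and apply Lemma~\ref{lem:reparametrization} on that interval to obtain $\cW(\rho_s,\rho_t)\leq\int_s^t\sqrt{\cA'(\rho_r,V_r)}\dd r$. This exhibits $\rho$ as absolutely continuous with majorant $m(r)=\sqrt{\cA'(\rho_r,V_r)}\in L^1(0,T)$, and the general theory of metric derivatives then yields $\abs{\rho'_t}^2\leq\cA'(\rho_t,V_t)$ a.e.

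\emph{Converse, construction of $\tilde V$:} assume $\rho$ is absolutely continuous. After an arc-length reparametrisation I may assume $\abs{\rho'_t}\in L^\infty(0,T)$; the general case is recovered by reparametrising back at the end. Set $t_k^n:=kT/2^n$ and use Theorem~\ref{thm:existenceminimizers} to pick on each $[t_{k-1}^n,t_k^n]$ a constant-speed geodesic between $\rho_{t_{k-1}^n}$ and $\rho_{t_k^n}$; concatenating produces $(\rho^n,V^n)\in\CE'_T(\rho_0,\rho_T)$ with $\cA'(\rho^n_t,V^n_t)=\cW(\rho_{t_{k-1}^n},\rho_{t_k^n})^2/(t_k^n-t_{k-1}^n)^2$ on each subinterval. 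Summing and applying Jensen's inequality yields
\begin{align*}
  \int_0^T\cA'(\rho^n_t,V^n_t)\dd t~=~\sum_{k=1}^{2^n}\frac{\cW(\rho_{t_{k-1}^n},\rho_{t_k^n})^2}{t_k^n-t_{k-1}^n}~\leq~\int_0^T\abs{\rho'_r}^2\dd r\ ,
\end{align*}
uniformly in $n$. This supplies the same uniform total-variation bound on the signed measures $V^n_t(x,y)\dd t$ as in \eqref{eq:totalvar-estimate}, so the weak$^*$ compactness argument from the proof of Theorem~\ref{thm:existenceminimizers} extracts, along a subsequence, a measurable $\tilde V:[0,T]\to\R^{\cX\times\cX}$ such that $V^n_t(x,y)\dd t\to\tilde V_t(x,y)\dd t$ weakly$^*$ for every pair $(x,y)$.

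\emph{Main obstacle, equality and uniqueness:} the central technical point is that $V^n$ drives the piecewise-geodesic curve $\rho^n$ rather than $\rho$ itself, so to conclude $(\rho,\tilde V)\in\CE'_T(\rho_0,\rho_T)$ one must first verify $\rho^n\to\rho$ pointwise. This I would establish from the estimate
\begin{align*}
  \cW(\rho^n_t,\rho_t)~\leq~\cW(\rho^n_t,\rho_{t_{k-1}^n})+\cW(\rho_{t_{k-1}^n},\rho_t)~\leq~2\int_{t_{k-1}^n}^{t_k^n}\abs{\rho'_r}\dd r\qquad(t\in[t_{k-1}^n,t_k^n])\ ,
\end{align*}
whose right-hand side tends to zero uniformly in $t$ by absolute continuity of the integral; compatibility of $\cW$ with the Euclidean topology (Theorem~\ref{thm:ma11-main}) then upgrades this to pointwise convergence on $\cX$. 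Passing to the limit in the continuity equation as in \eqref{eq:ce-boundary} then gives $(\rho,\tilde V)\in\CE'_T(\rho_0,\rho_T)$. For the equality $\cA'(\rho_t,\tilde V_t)=\abs{\rho'_t}^2$ a.e., the Ioffe--Buttazzo lower-semicontinuity result \cite[Thm.~3.4.3]{But89} applied exactly as in the proof of Theorem~\ref{thm:existenceminimizers} combined with the above bound yields $\int_0^T\cA'(\rho_t,\tilde V_t)\dd t\leq\int_0^T\abs{\rho'_t}^2\dd t$, which together with the pointwise lower bound $\abs{\rho'_t}^2\leq\cA'(\rho_t,\tilde V_t)$ from the easy direction forces equality a.e. Finally, uniqueness of $\tilde V$ modulo the identification of Section~\ref{sec:notation} follows from strict convexity of $\alpha(\cdot,s,t)$ on $\{\theta(s,t)>0\}$: averaging two candidates gives a competitor which drives $\rho$ and whose action is squeezed between $\abs{\rho'_t}^2$ from above (by Lemma~\ref{lem:convex}) and below (by the easy direction), forcing coincidence on $\{\hrho_t(x,y)K(x,y)>0\}$.
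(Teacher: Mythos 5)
Your proposal is correct and follows essentially the same route as the paper, whose proof is exactly the sketch you flesh out: adapt \cite[Thm.~5.17]{DNS09} by approximating $\rho$ with piecewise $\cW$-minimizing curves and obtaining $\tilde V$ as a subsequential weak$^*$ limit of their velocity fields via the compactness and lower-semicontinuity argument from the proof of Theorem~\ref{thm:existenceminimizers}. The only details the paper leaves implicit (the reparametrisation to bounded metric speed, the uniform convergence $\rho^n\to\rho$, and uniqueness via strict convexity of $\alpha$) are handled correctly in your write-up.
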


\begin{proof}
  The proof follows from the very same arguments as in
  \cite[Thm. 5.17]{DNS09}. To contruct the velocity field $\tilde V$,
  the curve $\rho$ is approximated by curves $(\rho^n,V^n)$ which are
  piecewise minimizing. The velocity field $\tilde V$ is then defined
  as a subsequential limit of the velocity fields $V^n$. In our case,
  existence of this limit is guaranteed by a compactness argument
  similar to the one in the proof of Theorem
  \ref{thm:existenceminimizers}.
\end{proof}

For later use we state an explicit formula for the geodesic equations in $\cP_*(\cX)$ from \cite[Proposition 3.4]{Ma11}.
Since the interior $\cP_*(\cX)$ of $\cP(\cX)$ is Riemannian by Theorem
\ref{thm:ma11-main}, local existence and uniqueness of geodesics is
guaranteed by standard Riemannian geometry.

\begin{proposition}\label{prop:geodesic-eqns}
  Let $\bar\rho \in \PXs$ and $\bar\psi \in \R^{\cX}$. On a
  sufficiently small time interval around 0, the unique constant speed
  geodesic with $\rho_0 = \bar\rho$ and initial tangent vector 
  $\nabla \psi_0 = \nabla\bar\psi$ satisfies the following equations:
\begin{equation}\begin{aligned} \label{eq:geod-equs}
\begin{cases}
\partial_t \rho_t(x) + 
   \displaystyle\sum_{y \in \cX}  ( \psi_t(y) - \psi_t(x) ) \hrho_t(x,y) K(x,y)  = 0 \;,\\
 \partial_t \psi_t(x)  + \displaystyle\frac12
     \displaystyle\sum_{y \in \cX} \big(  \psi_t(x) -\psi_t(y) \big)^2 
     		 \partial_1\theta(\rho_t(x), \rho_t(y))  K(x,y) = 0 \;.
\end{cases}
\end{aligned}\end{equation}
\end{proposition}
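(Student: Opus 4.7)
The plan is to identify \eqref{eq:geod-equs} with Hamilton's equations for the Hamiltonian $H(\rho,\psi) := \tfrac{1}{2}\cA(\rho,\psi)$ attached to the Riemannian structure of Theorem \ref{thm:ma11-main}(ii). Local existence and uniqueness of constant speed geodesics with the prescribed initial data are already guaranteed by standard Riemannian geometry, as indicated just before the statement; what remains is to verify that they obey the stated ODE system.

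The first step is to recall that under the identification $T_\rho \cong \{\nabla\psi : \psi \in \R^\cX\}$ from Theorem \ref{thm:ma11-main}(ii), the variable $\psi$ plays the role of the momentum conjugate to $\rho$ with respect to the $L^2(\pi)$ pairing. In this framework, a smooth curve $(\rho_t,\psi_t) \in \PXs \times \R^\cX$ is a constant speed geodesic with tangent vector represented by $\nabla\psi_t$ if and only if Hamilton's equations $\dot\rho = \partial_\psi H$ and $\dot\psi = -\partial_\rho H$ hold in the $L^2(\pi)$ sense. A direct expansion $H(\rho,\psi + \epsilon h) = H(\rho,\psi) + \epsilon \langle \nabla\psi, \nabla h \rangle_\rho + o(\epsilon)$, followed by integration by parts against $\pi$, yields $\partial_\psi H = -\nabla_\rho \cdot \nabla\psi$, so the first Hamilton equation is exactly the continuity equation, i.e., the first line of \eqref{eq:geod-equs}.

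The second and main step is the computation of $\partial_\rho H$. Expanding $H(\rho + \epsilon g,\psi)$ in $\epsilon$ and using $\partial_{\rho(z)}\hrho(x,y) = \partial_1\theta(\rho(x),\rho(y))\delta_{xz} + \partial_2\theta(\rho(x),\rho(y))\delta_{yz}$ produces two summands. I would swap indices $x \leftrightarrow y$ in the second one and invoke the symmetry $\partial_2\theta(s,t) = \partial_1\theta(t,s)$ coming from (A2) together with detailed balance $K(x,y)\pi(x) = K(y,x)\pi(y)$; the two summands then coincide and give
\begin{equation*}
\partial_{\rho(z)} H = \tfrac{1}{2}\pi(z) \sum_{y \in \cX} \bigl(\psi(z) - \psi(y)\bigr)^2 \partial_1\theta(\rho(z),\rho(y)) K(z,y),
\end{equation*}
so that $\dot\psi = -\partial_\rho H$ turns into exactly the second equation of \eqref{eq:geod-equs}.

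The main obstacle is purely combinatorial bookkeeping: the index swap must combine the symmetry of $\theta$ with detailed balance in a compatible way, so that the two terms in the expansion of $\partial_\rho H$ merge into a single symmetric one without an extraneous factor of $2$. Once this is in place, \eqref{eq:geod-equs} is a smooth first-order ODE on $\PXs \times (\R^\cX/\R)$, and Picard--Lindel\"of supplies the asserted local existence and uniqueness, in agreement with the Riemannian statement.
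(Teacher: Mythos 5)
Your proposal is correct, and the computations check out: with $H(\rho,\psi)=\tfrac12\cA(\rho,\psi)$, differentiating in $\psi$ and integrating by parts gives $\dot\rho=-\nabla_{\rho}\cdot\nabla\psi$, i.e.\ the continuity equation, and differentiating in $\rho$, swapping $x\leftrightarrow y$ in the $\partial_2\theta$ term and using $\partial_2\theta(s,t)=\partial_1\theta(t,s)$ together with detailed balance does merge the two summands into $\tfrac12\pi(z)\sum_y(\psi(z)-\psi(y))^2\partial_1\theta(\rho(z),\rho(y))K(z,y)$ with no stray factor of $2$, which is exactly the second line of \eqref{eq:geod-equs}. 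Note, though, that the paper itself gives no proof here: it simply quotes the equations from \cite[Proposition 3.4]{Ma11} and invokes standard Riemannian theory for local existence and uniqueness, so your argument is a self-contained derivation of what the paper outsources (and it is essentially the Hamiltonian/cogeodesic-flow derivation one would expect in the cited reference). Two small points deserve to be made explicit rather than asserted. First, the claim that $\psi$ is the momentum conjugate to $\rho$ in the $L^2(\pi)$ pairing is exactly the statement that the metric of Theorem \ref{thm:ma11-main}(2) maps the tangent vector $\dot\rho=-\nabla_\rho\cdot\nabla\psi$ to the covector represented by $\psi$; this follows from the one-line computation $\langle\nabla\psi,\nabla\chi\rangle_\rho=-\langle\psi,\nabla_\rho\cdot\nabla\chi\rangle_\pi$, and it is what licenses identifying geodesics of $(\PXs,\cW)$ with solutions of Hamilton's equations for the cometric. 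Second, for the Picard--Lindel\"of step one should observe that both right-hand sides of \eqref{eq:geod-equs} are unchanged when a constant is added to $\psi$ (so the flow descends to $\R^\cX/\R$, matching the fact that only $\nabla\psi_0$ is prescribed) and that the first equation preserves the constraint $\sum_x\pi(x)\rho(x)=1$ (by antisymmetry under $x\leftrightarrow y$ and detailed balance), while $\partial_1\theta$ is smooth on $(0,\infty)^2$ and $\rho$ stays in $\PXs$ for short times; with these remarks your ODE argument is complete and consistent with the uniqueness of Riemannian geodesics asserted in the proposition.
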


\section{Ricci curvature}
\label{sec:Ricci}

In this section we initiate the study of a notion of Ricci curvature lower
boundedness in the spirit of Lott, Sturm, and Villani
\cite{LV09,S06}. Furthermore, we present a characterisation, which we
shall use to prove Ricci bounds in concrete examples.

As before, we fix an irreducible and reversible Markov kernel $K$ on a
finite set $\cX$ with steady state $\pi$. The associated Markov
semigroup shall be denoted by $(P_t)_{t \geq0}$.

\begin{assumption}\label{ass:log-mean}
Throughout the remainder of the paper we assume that $\theta$ is the logarithmic mean.
\end{assumption}

We are now ready to state the definition, which has already been given in \cite[Definition 1.3]{Ma11}.

\begin{definition}\label{def:Ricci}
  We say that $K$ has \emph{\nl Ricci curvature bounded from below by
    $\kappa \in \R$} and write $\Ric(K) \geq \kappa$, if the following
  holds: for every constant speed geodesic $(\rho_t)_{t \in [0,1]}$
  in $(\PX, \cW)$ we have
\begin{align}\label{eq:K-convex}
  \cH(\rho_t) \leq
  (1-t) \cH(\rho_0) + t \cH(\rho_1) - \frac\kappa{2} t(1-t) \cW(\rho_0, \rho_1)^2\;.
\end{align}
\end{definition}

An important role in our analysis is played by the quantity
$\cB(\rho, \psi) $, which is defined for $\rho \in \PXs$ and $\psi \in
\R^\cX$ by
\begin{equation}\begin{aligned}\label{eq:B}
  \cB(\rho, \psi) 
  := &\ \frac12
  \bip{\hDelta\rho\, \cdot\, \nabla \psi, \nabla \psi }_\pi
    - \bip{\hrho  \cdot\,\nabla \psi\ ,\, \nabla \Delta\psi}_\pi 
\\  = &\ 
  \frac14
       \sum_{x,y,z \in \cX}
         \big(\psi(x) - \psi(y)\big)^2 
        \Big( \partial_1 \theta\big(\rho(x), \rho(y)\big) \big(\rho(z) - \rho(x) \big)K(x,z)
 \\& \qquad
    + \partial_2 \theta\big(\rho(x), \rho(y)\big) \big(\rho(z) - \rho(y) \big)K(y,z)\Big)
       K(x,y) \pi(x)
  \\ &  - 
   \frac12
       \sum_{x,y,z \in \cX}
         \Big( K(x,z) \big( \psi(z) - \psi(x) \big)
           - K(y,z)   \big( \psi(z) -\psi(y)\big)
             \Big)
  \\& \qquad \times       \big(\psi(x) - \psi(y)\big)
        \hrho(x,y)K(x,y)\pi(x)\;,
\end{aligned}\end{equation}
where
\begin{align*}
 \hDelta \rho(x,y) :=  
 \partial_1 \theta(\rho(x), \rho(y)) \Delta \rho(x) 
  + \partial_2 \theta(\rho(x), \rho(y)) \Delta \rho(y)\;.
\end{align*}
The significance of $\cB(\rho, \psi)$ is mainly due to the following
result:

\begin{proposition}\label{prop:Hessian}
For $\rho \in \PXs$ and $\psi \in \R^\cX$ we have 
\begin{align*}
\bip{ \Hess \cH(\rho) \nabla \psi\ ,\, \nabla \psi}_\rho
    = \cB(\rho,\psi)\;.
\end{align*}
\end{proposition}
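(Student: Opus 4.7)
The plan is to apply the standard Riemannian identity
\[
\langle\Hess\cH(\rho)\nabla\psi,\nabla\psi\rangle_\rho~=~\ddtt\cH(\rho_t)\bigg|_{t=0}\ ,
\]
where $(\rho_t)$ is the (locally defined) constant speed geodesic with $\rho_0=\rho$ and initial tangent $\nabla\psi_0=\nabla\psi$. By Proposition \ref{prop:geodesic-eqns}, this geodesic satisfies the Hamiltonian system \eqref{eq:geod-equs}. The whole proof then reduces to differentiating $\cH(\rho_t)$ twice along this curve and matching the result with the explicit expression \eqref{eq:B}.

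For the first derivative, I compute $\ddt\cH(\rho_t)=\sum_x\pi(x)(\log\rho_t(x)+1)\dot\rho_t(x)$; the constant term drops out because $\dot\rho_t$ has zero $\pi$-average. Inserting the continuity equation and symmetrising the resulting sum using detailed balance \eqref{eq:detailed-balance} and the symmetry of $\hrho_t$ yields
\[
\ddt\cH(\rho_t)~=~\frac12\sum_{x,y}\pi(x)K(x,y)\,\hrho_t(x,y)\bigl(\log\rho_t(x)-\log\rho_t(y)\bigr)\bigl(\psi_t(x)-\psi_t(y)\bigr)\ .
\]
Here I invoke the defining property of the logarithmic mean, $\hrho(x,y)(\log\rho(x)-\log\rho(y))=\rho(x)-\rho(y)$, to rewrite this as $\langle\nabla\rho_t,\nabla\psi_t\rangle_\pi=-\langle\rho_t,\Delta\psi_t\rangle_\pi$. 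This is the one place where Assumption \ref{ass:log-mean} enters essentially.

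For the second derivative, the product rule gives
\[
\ddtt\cH(\rho_t)~=~-\langle\dot\rho_t,\Delta\psi_t\rangle_\pi-\langle\Delta\rho_t,\dot\psi_t\rangle_\pi\ .
\]
For the first term I substitute $\dot\rho_t=-\nabla\cdot(\hrho_t\nabla\psi_t)$ from the continuity equation and symmetrise once more via detailed balance to obtain $-\langle\dot\rho_t,\Delta\psi_t\rangle_\pi=-\langle\hrho_t\nabla\psi_t,\nabla\Delta\psi_t\rangle_\pi$, which is exactly the second summand in \eqref{eq:B}. For the second term I substitute the Hamilton--Jacobi equation for $\psi_t$ from \eqref{eq:geod-equs}; this produces a sum involving $\partial_1\theta(\rho_t(x),\rho_t(y))\Delta\rho_t(x)(\psi_t(x)-\psi_t(y))^2$, and by exchanging $x\leftrightarrow y$ (using detailed balance plus the symmetry $\theta(s,t)=\theta(t,s)$, i.e.\ $\partial_1\theta(s,t)=\partial_2\theta(t,s)$) one checks that this equals $\tfrac12\langle\hDelta\rho_t\nabla\psi_t,\nabla\psi_t\rangle_\pi$, the first summand in \eqref{eq:B}.

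Evaluating at $t=0$ then gives $\ddtt\cH(\rho_t)|_{t=0}=\cB(\rho,\psi)$, and comparing with the expanded form of \eqref{eq:B} (after rewriting the divergences in terms of the triple sums over $x,y,z$) confirms the claimed identity. The only delicate point is the symmetrisation bookkeeping for the Hamilton--Jacobi term: one must carefully split $\hDelta\rho(x,y)$ into its $\partial_1\theta\,\Delta\rho(x)$ and $\partial_2\theta\,\Delta\rho(y)$ pieces and verify that detailed balance interchanges them, so that $-\langle\Delta\rho,\dot\psi\rangle_\pi$ recovers the full symmetric expression, not just half of it. This is the main bookkeeping obstacle; everything else is a routine application of the geodesic equations.
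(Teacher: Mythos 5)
Your proposal is correct and follows essentially the same route as the paper's proof: evaluate $\ddtt\cH(\rho_t)$ along a solution of the geodesic equations \eqref{eq:geod-equs}, use the continuity equation and the logarithmic-mean identity for the first derivative, and then handle the two terms of the second derivative by integration by parts (giving $-\langle\hrho\nabla\psi,\nabla\Delta\psi\rangle_\pi$) and by symmetrising the Hamilton--Jacobi term via detailed balance and $\partial_1\theta(s,t)=\partial_2\theta(t,s)$ (giving $\tfrac12\langle\hDelta\rho\,\nabla\psi,\nabla\psi\rangle_\pi$). The symmetrisation bookkeeping you flag as the delicate point is exactly the step the paper carries out with \eqref{eq:detailed-balance}, and your accounting of it is correct.
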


\begin{proof}
  Take $(\rho , \psi)$ satisfying the geodesic equations
  \eqref{eq:geod-equs}, so that
\begin{align*}
\bip{ \Hess \cH(\rho_t) \nabla \psi_t\ ,\, \nabla \psi_t}_{\rho_t} = \ddtt \cH(\rho_t)\;.
\end{align*}
Using the continuity equation we obtain
\begin{align*}
 \ddt \cH(\rho_t)
    &=  - \bip{ 1 + \log \rho_t , \nabla\cdot( \hat\rho_t \nabla \psi_t) }_\pi
  \\&  =  \bip{ \nabla \log \rho_t ,  \hat\rho_t \cdot \nabla \psi_t }_\pi
  \\&  =  \bip{ \nabla  \rho_t ,  \nabla \psi_t }_\pi\;.
\end{align*}
Furthermore, 
\begin{align*}
  \ddtt \cH(\rho_t)
    & =  \bip{ \nabla \partial_t \rho_t ,  \nabla \psi_t }_\pi
        + \bip{ \nabla  \rho_t ,  \nabla \partial_t \psi_t }_\pi 
   \\&  = -  \bip{  \partial_t \rho_t ,  \Delta \psi_t }_\pi
         -  \bip{ \Delta  \rho_t ,  \partial_t \psi_t }_\pi \ .
\end{align*}
Using the continuity equation we obtain
\begin{align*}
  \bip{  \partial_t \rho_t ,  \Delta \psi_t }_\pi
  & = - \bip{   \nabla\cdot( \hat\rho \nabla \psi_t) ,  \Delta \psi_t }_\pi
 \\&  =   \bip{ \hat\rho_t \nabla \psi_t ,  \nabla\Delta \psi_t }_\pi
   =   \bip{  \nabla \psi_t ,  \nabla\Delta \psi_t }_{\rho_t}\;.
\end{align*}
Furthermore, applying the geodesic equations \eqref{eq:geod-equs} and
the detailed balance equations \eqref{eq:detailed-balance}, we infer
that
\begin{align*}
& \bip{ \Delta  \rho_t ,  \partial_t \psi_t }_\pi 
 \\&  =
   -\frac12
       \sum_{x,y,z \in \cX}
         \big(\psi_t(x) - \psi_t(y)\big)^2 \partial_1 \theta\big(\rho_t(x), \rho_t(y)\big) 
\\& \qquad\qquad\qquad  \times         \big(\rho_t(z)  - \rho_t(x)\big) K(x,y) K(x,z) \pi(x)
 \\&  =
   -\frac14
       \sum_{x,y,z \in \cX}
         \big(\psi_t(x) - \psi_t(y)\big)^2 
        \Big( \partial_1 \theta\big(\rho_t(x), \rho_t(y)\big) \big(\rho_t(z) - \rho_t(x) \big)K(x,z)
 \\& \qquad\qquad\qquad
   + \partial_2 \theta\big(\rho_t(x), \rho_t(y)\big) \big(\rho_t(z) - \rho_t(y) \big)K(y,z)\Big)
       K(x,y)  \pi(x)
 \\& = -\frac12 \bip{\hDelta\rho_t\, \cdot\, \nabla \psi_t, \nabla \psi_t }_\pi\;.
\end{align*}
Combining the latter three identities, we arrive at
\begin{align*}
   \ddtt \cH(\rho_t)
     = - \bip{     \nabla \psi_t ,  \nabla\Delta \psi_t }_{\rho_t}
        +  \frac12 \bip{\hDelta\rho_t\, \cdot\, \nabla \psi_t, \nabla \psi_t }_\pi\;,
\end{align*}
which is the desired identity.
\end{proof}

Our next aim is to show that $\kappa$-convexity of $\cH$ along
geodesics is equivalent to a lower bound of the Hessian of $\cH$ in
$\PXs$.  Since the Riemannian metric on $(\PX, \cW)$ degenerates at
the boundary, this is not an obvious result.
In particular, in order to prove the implication ``$(4) \Rightarrow
(3)$'' below we cannot directly apply the equivalence between the so-called EVI \eqref{eq:EVI} and the usual gradient flow equation, which holds on complete Riemannian
manifolds (see, e.g., \cite[Proposition 23.1]{Vil09}). Therefore, we take a different approach,
based on an argument by Daneri and Savar\'e \cite{DS08}, which avoids
delicate regularity issues for geodesics. An additional benefit of
this approach is that we expect it to apply in a more general setting
where the underlying space $\cX$ is infinite, and finite-dimensional
Riemannian techniques do not apply at all.

\begin{remark}\label{rem:B}
  The quantity $\cB(\rho,\psi)$ arises naturally in the Eulerian
  approach to the Wasserstein metric, as developed in \cite{
  DS08,OW06}.
  In fact, in a crucial argument from 
  \cite{DS08}, the authors consider a certain two-parameter family of
  measures $(\rho_t^s)$ and functions $(\psi_t^s)$ on a Riemannian
  manifold $\cM$, and show that
\begin{align} \label{eq:DS-ineq}
 \partial_s \cH(\rho_t^s)  
 + \frac12 \partial_t \int |\nabla \psi_t^s|^2 \dd  \rho_t^s
   = - B(\rho_t^s, \psi_t^s)\;,
\end{align}
where  
\begin{align*}
B(\rho, \psi)
  := \int_\cM  \Big(  \frac12  \Delta ( |\nabla \psi|^2) 
    - \ip{\nabla \psi, \nabla \Delta\psi} \Big) \dd \rho\;.
\end{align*}
Since Bochner's formula asserts that
\begin{align*}
 B(\rho, \psi)
  :=  \int_\cM   |D^2 \psi|^2
    + {\rm Ric}(\nabla \psi, \nabla\psi)  \dd \rho\;,
\end{align*}
one obtains a lower bound on $B$ if the Ricci curvature is bounded
from below. The lower bound on $B$ can be used to prove an evolution
variational inequality, which in turn yields convexity of the entropy along
$W_2$-geodesics.

In our setting, the quantity $\cB(\rho, \psi)$ can be regarded as a
discrete analogue of $B(\rho, \psi)$. Therefore the inequality
$\cB(\rho, \psi) \geq \kappa \cA(\rho,\psi)$ could be interpreted as a
one-sided Bochner inequality, which allows to adapt the strategy from
\cite{DS08} to the discrete setting.
\end{remark}

In the following result and the rest of the paper we shall use the notation
\begin{align*}
  \ddtr f(t) = \limsup_{h \downarrow 0} \frac{f(t+h) - f(t)}{h}\;.
\end{align*}

\begin{theorem}\label{thm:Ric-equiv}
  Let $\kappa \in \R$. For an irreducible and reversible Markov kernel
  $(\cX, K)$ the following assertions are equivalent:
\begin{enumerate}
\item $\Ric(K) \geq \kappa$\;;
\item For all $\rho, \nu \in \PX$, the following `evolution
  variational inequality' holds for all $t \geq 0$:
\begin{align}\label{eq:EVI}
  \frac{1}{2}\ddtr \cW^2(P_t \rho, \nu) + \frac{\kappa}{2}
\cW^2(P_t \rho, \nu) \leq \cH(\nu) - \cH(P_t \rho)\;;
\end{align}
\item For all $\rho, \nu \in \PXs$, \eqref{eq:EVI} holds for
  all $t \geq 0$;
\item For all $\rho \in \PXs$ and $\psi \in \R^\cX$ we have
\begin{align*}
 \cB(\rho, \psi) \geq \kappa \cA(\rho, \psi)\;.
\end{align*}
\item For all $\rho \in \PXs$ we have 
\begin{align*}
\Hess \cH(\rho) \geq \kappa\;;
\end{align*}
\item For all $\bar\rho_0, \bar\rho_1 \in \PXs$ there exists a
  constant speed geodesic $( \rho_t)_{t \in [0,1]}$ satisfying
  $\rho_0 = \bar\rho_0$, $\rho_1 = \bar\rho_1$, and
  \eqref{eq:K-convex}.
\end{enumerate}

\end{theorem}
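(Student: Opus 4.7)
The plan is to establish the chain of implications $(1)\Rightarrow(6)\Rightarrow(5)\Leftrightarrow(4)\Rightarrow(3)\Rightarrow(2)\Rightarrow(1)$. The implication $(1)\Rightarrow(6)$ is essentially trivial: by Theorem \ref{thm:existenceminimizers} any two densities in $\PXs$ can be joined by a constant speed geodesic, and $\kappa$-convexity holds along it by assumption. For $(5)\Leftrightarrow(4)$ I would invoke Proposition \ref{prop:Hessian} together with the identity $\cA(\rho,\psi)=\|\nabla\psi\|_\rho^2$, so that the bound $\Hess\cH(\rho)\geq\kappa$ on the tangent space $T_\rho$ at $\rho\in\PXs$ is literally the inequality $\cB(\rho,\psi)\geq\kappa\cA(\rho,\psi)$.

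For $(6)\Rightarrow(5)$ I would argue locally in the Riemannian interior $\PXs$. Given $\rho\in\PXs$ and a tangent vector $\nabla\psi$, Proposition \ref{prop:geodesic-eqns} provides a short-time smooth geodesic $(\rho_t)_{|t|<\eps}$ with $\rho_0=\rho$ and initial velocity $\nabla\psi$, lying in $\PXs$. For small $\eps'<\eps$ the endpoints $\rho_{\pm\eps'}$ lie in $\PXs$, and by (6) they are connected by a constant speed $\cW$-geodesic along which $\cH$ is $\kappa$-convex; local uniqueness of Riemannian geodesics forces this to coincide, up to reparametrisation, with $(\rho_t)$. Expanding the convexity inequality in $t$ at $t=0$ then gives $\ddtt\cH(\rho_t)|_{t=0}\geq\kappa\cA(\rho,\psi)$, which by Proposition \ref{prop:Hessian} is exactly $\cB(\rho,\psi)\geq\kappa\cA(\rho,\psi)$.

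The implication $(4)\Rightarrow(3)$ is the analytic heart of the theorem and, as noted in Remark \ref{rem:B}, the reason for using the Daneri--Savar\'e \cite{DS08} Eulerian strategy rather than completeness arguments on a Riemannian manifold. Given $\rho,\nu\in\PXs$, I would fix $s\geq 0$, set $\rho_s=P_s\rho$, and take a constant speed geodesic $(\sigma^s_t)_{t\in[0,1]}$ from $\rho_s$ to $\nu$ with velocity $\psi^s_t$. Computing as in Proposition \ref{prop:Hessian}, but treating the second variable as the heat flow direction, one obtains the discrete analogue of the two-parameter identity
\begin{align*}
 \partial_s\cH(\sigma^s_t)+\tfrac12\partial_t\cA(\sigma^s_t,\psi^s_t)~=~-\cB(\sigma^s_t,\psi^s_t)\;.
\end{align*}
The assumed Bochner-type inequality $\cB\geq\kappa\cA$ then yields, after integrating in $t\in[0,1]$ and using that $\cA(\sigma^s_t,\psi^s_t)\equiv\cW^2(\rho_s,\nu)$ along the constant speed geodesic together with $\ddt\cH(\sigma_t^s)|_{t=0}=-\ip{\grad\cH(\rho_s),\nabla\psi_0^s}_{\rho_s}$, the inequality \eqref{eq:EVI}. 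The main obstacles here are that the geodesic $\sigma_t^s$ need not lie entirely in $\PXs$ (so the identity must be justified by an approximation, e.g.\ replacing the endpoints by $(1-\delta)\sigma_t^s+\delta\one$ and passing to the limit via the convexity in Proposition \ref{prop:squared-W-cvx}), and the right derivative $\ddtr\cW^2(P_s\rho,\nu)$ must be controlled without a priori $C^1$ regularity in $s$.

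Finally, $(3)\Rightarrow(2)$ is an approximation: for general $\rho,\nu\in\PX$ replace $\nu$ by $\nu_\delta:=(1-\delta)\nu+\delta\one\in\PXs$, note that $P_t\rho\in\PXs$ for $t>0$, apply (3), and let $\delta\downarrow 0$ using continuity of $\cW$ on the compact space $\PX$ (Theorem \ref{thm:ma11-main}(1)) and continuity of $\cH$ on the finite-state simplex. For $(2)\Rightarrow(1)$ I would use the by-now standard fact, due to Daneri--Savar\'e, that $\mathrm{EVI}_\kappa$ for the gradient flow of a functional implies $\kappa$-geodesic convexity of that functional: feeding $\nu=\gamma_0$ and $\nu=\gamma_1$ into \eqref{eq:EVI} with $\rho=\gamma_t$ and combining the two inequalities — after exploiting $\cW(\gamma_s,\gamma_t)=|s-t|\cW(\gamma_0,\gamma_1)$ and the bound $\ddtr\tfrac12\cW^2(P_s\gamma_t,\gamma_j)|_{s=0}\leq\cW(\gamma_t,\gamma_j)|(\gamma_t)'_{\text{heat}}|$ — yields after a short convexity argument the inequality \eqref{eq:K-convex} along $\gamma$. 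Since every geodesic in $(\PX,\cW)$ is obtained this way, assertion (1) follows.
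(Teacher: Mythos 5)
Your overall architecture (the cycle $(1)\Rightarrow(6)\Rightarrow(5)\Leftrightarrow(4)\Rightarrow(3)\Rightarrow(2)\Rightarrow(1)$) is the same as the paper's, and your treatment of the easy arrows --- $(1)\Rightarrow(6)$ via Theorem \ref{thm:existenceminimizers}, $(6)\Rightarrow(5)$ via short-time geodesics from Proposition \ref{prop:geodesic-eqns} and local uniqueness, $(5)\Leftrightarrow(4)$ via Proposition \ref{prop:Hessian}, and $(2)\Rightarrow(1)$ via the Daneri--Savar\'e metric argument --- matches the paper (for $(3)\Rightarrow(2)$ the paper simply cites \cite[Theorem 3.3]{DS08}; your $\delta$-approximation is plausible for $t>0$, but you should say how you recover the inequality at $t=0$ when $\rho\notin\PXs$, e.g.\ via the integrated form of the EVI).

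The genuine gap is in $(4)\Rightarrow(3)$, which is the heart of the theorem. The two-parameter identity you propose for the family $\sigma^s_t$ of constant speed geodesics from $P_s\rho$ to $\nu$ cannot hold as stated: along a constant speed geodesic the action $t\mapsto\cA(\sigma^s_t,\psi^s_t)$ is constant, so your identity collapses to $\partial_s\cH(\sigma^s_t)=-\cB(\sigma^s_t,\psi^s_t)$, and at $t=1$ the endpoint $\sigma^s_1=\nu$ does not depend on $s$, which would force $\cB(\nu,\psi^s_1)=0$ --- false in general. The paper's mechanism (Lemma \ref{lem:dan-sav}) is structurally different: one starts from a \emph{smooth, merely almost-minimizing} curve $(\rho^s,\psi^s)\in\CE_1(\nu,\rho)$ supplied by Lemma \ref{lem:smooth-reformulation} and deforms it by the heat flow run for time $st$, i.e.\ $\rho^s_t=e^{st\Delta}\rho^s$; the $st$-scaling is exactly what produces the factor $-s\,\cB$ in the identity, keeps the $s=0$ endpoint frozen at $\nu$, and, combined with the \cite[Lemma 5.1]{DS08}-type estimate $\int_0^1 e^{2\kappa sh}\cA(\rho^s_h,\psi^s_h)\dd s\geq m(\kappa h)\cW^2(P_h\rho,\nu)$, yields the EVI without ever touching geodesics. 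Your variant reintroduces precisely the difficulties the paper flags before the theorem: the geodesic family need not stay in $\PXs$ (where alone the Riemannian computation and the Hessian bound are available), there is no smooth dependence of $\sigma^s$ on $s$, and $s\mapsto\cW^2(P_s\rho,\nu)$ has no a priori differentiability; your proposed remedies (perturbing endpoints towards $\one$ and ``controlling'' the right derivative) are only gestured at, and it is exactly to avoid having to carry them out that the Eulerian construction with non-geodesic curves is used. As it stands, the central implication is not proved.
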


\begin{proof}
``$(3) \Rightarrow (2)$'': This is a special case of \cite[Theorem 3.3]{DS08}.

``$(2) \Rightarrow (1)$'': This follows by applying \cite[Theorem
3.2]{DS08} to the metric space $(\PX, \cW)$ and the functional
$\cH$.

``$(1) \Rightarrow (6)$'': This is clear in view of Theorem \ref{thm:existenceminimizers}.

``$(6) \Rightarrow (5)$'': Take $\rho \in \PXs$ and $\psi \in \R^\cX$
and consider the unique solution $(\rho_t, \psi_t)_{t \in (-\eps,
  \eps)}$ to the geodesic equations with $\rho_0 = \rho$ and $\psi_0 =
\psi$ on a sufficiently small time interval around 0. Using the local
uniqueness of geodesics and (6), we infer that
\begin{align*}
\Hess \cH(\rho)(\nabla\psi) = \ddtt\Big|_{t = 0}\cH(\rho_t) \geq \kappa \|\nabla \psi\|_\rho^2
\end{align*}
(see, e.g., the implication ``$(ii)\Leftrightarrow (i)$'' in \cite[Proposition 16.2]{Vil09}).

``$(5) \Rightarrow (4)$'': This follows from Proposition \ref{prop:Hessian}.

``$(4) \Rightarrow (3)$'': We follow \cite{DS08}. In view of Lemma
\ref{lem:smooth-reformulation} we can find a smooth curve $(\rho^{\cdot},
\psi^{\cdot}) \in \CE_1(\nu, \rho)$ satisfying
\begin{align}\label{eq:almost-geod}
 \int_0^1 \cA(\rho^s, \psi^s) \dd s < \cW(\rho, \nu)^2 + \eps\;.
\end{align}
Note in particular that $s \mapsto \rho^s$ and $s \mapsto\psi^s$ are sufficiently regular to
apply Lemma \ref{lem:dan-sav} below. Using the notation from this
lemma, we infer that
\begin{align*}
  \frac12 \partial_t \cA(\rho_t^s, \psi_t^s)
 	 + \partial_s \cH(\rho_t^s) =  - s \cB(\rho_t^s, \psi_t^s)\;.
\end{align*}
Using the assumption that $\cB \geq \kappa \cA$ we infer that
\begin{align*}
 \frac12 \partial_t \Big( e^{2\kappa st} \cA(\rho_t^s, \psi_t^s) \Big)
 	 + \partial_s  \Big( e^{2\kappa st} \cH(\rho_t^s) \Big)
	 \leq 2 \kappa  t e^{2\kappa st} \cH(\rho_t^s)\;.
\end{align*}
Integration with respect to $t \in [0,h]$ and $s \in [0,1]$ yields
\begin{align*}
&  \frac12 \int_0^1 \Big( e^{2\kappa sh} \cA(\rho_h^s, \psi_h^s)
    -  \cA(\rho_0^s, \psi_0^s) \Big) \dd s
\\&\qquad\qquad+  \int_0^h
\Big(  e^{2\kappa t} \cH(\rho_t^1) -  \cH(\rho_t^0) \Big)\dd t
   \leq  2 \kappa  \int_0^1 \int_0^h  t e^{2\kappa st} \cH(\rho_t^s)
   \dd t \dd s\;.
\end{align*}
Arguing as in \cite[Lemma 5.1]{DS08} we infer that
\begin{align*}
 \int_0^1  e^{2\kappa sh} \cA(\rho_h^s, \psi_h^s) \dd s
  \geq m(\kappa h)\cW^2(P_h\rho, \nu)\;,
\end{align*}
where $m(\kappa) = \frac{\kappa e^{\kappa}}{\sinh(\kappa)}$.  Using
\eqref{eq:almost-geod} together with the fact that the entropy
decreases along the heat flow, we infer that
\begin{equation}\begin{aligned}
\label{eq:approx}
&  \frac{m(\kappa h)}{2} \cW^2(P_h\rho, \nu)
- \frac12 \cW^2(\rho, \nu) - \eps   
\\&\qquad\qquad+  E_\kappa(h) \cH(P_h \rho) - h \cH(\nu) 
   \leq  2 \kappa 
   \int_0^1 \int_0^h  t e^{2\kappa st} \cH(\rho_t^s)
   \dd t \dd s\;,
\end{aligned}\end{equation}
where
$
 E_{\kappa}(h) :=  \int_0^h  e^{2\kappa t} \, dt.
$
Since $\cH$ is bounded, it follows that  
\begin{align*}
 \lim_{h \downarrow 0}
  \frac1h  \int_0^1 \int_0^h  t e^{2\kappa st} \cH(\rho_t^s)
   \dd t \dd s = 0\;.
\end{align*}
Furthermore, 
\begin{align*}
 \lim_{h \downarrow 0} \frac1h \Big( E_\kappa(h) \cH(P_h \rho) - h \cH(\nu) \Big) 
  = \cH(\rho) - \cH(\nu)\;.
\end{align*}
Since $\eps > 0$ is arbitrary, \eqref{eq:approx} implies that
\begin{align*}
  \ddhr\bigg|_{h = 0}  
  \left( \frac{m(\kappa h)}{2} \cW^2(P_h\rho, \nu)\right) 
    +  \cH(\rho) - \cH(\nu) 
    \leq 0\;.
\end{align*}
Taking into account that
\begin{align*}
  \ddhr\bigg|_{h = 0}  
  \left( \frac{m(\kappa h)}{2} \cW^2(P_h\rho, \nu)\right) 
   = \frac{\kappa}{2} \cW^2(\rho, \nu) + \frac12
   \ddhr  \bigg|_{h = 0} \cW^2(P_h\rho, \nu)\;,
\end{align*}
we obtain 
\eqref{eq:EVI} for $t = 0$, which clearly implies \eqref{eq:EVI} for  all $t \geq 0$. 
\end{proof}

The following result, which is used in the proof of Theorem
\ref{thm:Ric-equiv}, is a discrete analogue of \eqref{eq:DS-ineq} and
the proof proceeds along the lines of \cite[Lemma 4.3]{DS08}. Since
the details are slightly different in the discrete setting, we present
a proof for the convenience of the reader.

\begin{lemma}\label{lem:dan-sav}
  Let $\{\rho^s\}_{s \in [0,1]}$ be a smooth curve in $\PX$. For
  each $t \geq 0,$ set $\rho_t^s := e^{st\Delta} \rho^s,$ and let
  $\{\psi_t^s\}_{s \in [0,1]}$ be a smooth curve in $\R^\cX$ satisfying the
  continuity equation
\begin{align*}
  \partial_s \rho_t^s + \nabla \cdot ( \hrho_t^s \cdot \nabla \psi_t^s) =
  0\;, \qquad s \in [0,1]\;.
\end{align*}
Then the identity
\begin{align*}
 \frac12 \partial_t \cA(\rho_t^s, \psi_t^s)
 	 + \partial_s \cH(\rho_t^s) = - s \cB(\rho_t^s, \psi_t^s)
\end{align*}
holds for every $s \in [0,1]$ and $t \geq 0$.
\end{lemma}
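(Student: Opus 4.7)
The strategy is to compute $\partial_s\cH(\rho_t^s)$ and $\partial_t\cA(\rho_t^s,\psi_t^s)$ separately by exploiting the two evolution equations
\begin{align*}
\partial_s\rho_t^s~=~-\nabla\cdot(\hat\rho_t^s\cdot\nabla\psi_t^s)\;,\qquad
\partial_t\rho_t^s~=~s\Delta\rho_t^s\;,
\end{align*}
and then to combine the resulting expressions. All manipulations are of integration-by-parts type together with the defining property of the logarithmic mean. For the duration of the argument I drop the indices $s,t$ from the notation when no confusion can arise.

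First I would compute $\partial_s\cH(\rho)$. Differentiating under the sum and using the continuity equation in $s$ gives $\partial_s\cH(\rho)=\langle 1+\log\rho,\partial_s\rho\rangle_\pi=-\langle\log\rho,\nabla\cdot(\hat\rho\nabla\psi)\rangle_\pi$, since $\sum_x\pi(x)\partial_s\rho=0$. Integration by parts and the logarithmic-mean identity $\hat\rho(x,y)(\log\rho(y)-\log\rho(x))=\rho(y)-\rho(x)$ then yield $\partial_s\cH(\rho)=\langle\nabla\rho,\nabla\psi\rangle_\pi$.

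Next I would expand $\partial_t\cA(\rho,\psi)=\partial_t\|\nabla\psi\|_\rho^2$ by the product rule into a ``coefficient piece'' and a ``$\psi$-piece''. Since $\partial_t\rho=s\Delta\rho$, the chain rule and the definition of $\hat\Delta\rho$ give $\partial_t\hat\rho=s\,\hat\Delta\rho$, so the coefficient piece equals $s\langle\hat\Delta\rho\cdot\nabla\psi,\nabla\psi\rangle_\pi$. The $\psi$-piece is $2\langle\nabla\psi,\nabla\partial_t\psi\rangle_\rho$, and here the key step is to find an expression for $\nabla\cdot(\hat\rho\nabla\partial_t\psi)$ that does not contain $\partial_t\psi$ explicitly. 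This is done by equating $\partial_t\partial_s\rho=\partial_s\partial_t\rho$: the left-hand side equals $-\nabla\cdot(\partial_t\hat\rho\cdot\nabla\psi)-\nabla\cdot(\hat\rho\nabla\partial_t\psi)$, while on the right-hand side the product rule on $s\Delta\rho_t^s$ produces the crucial extra term $\Delta\rho$, giving $\partial_s(s\Delta\rho)=\Delta\rho+s\Delta\partial_s\rho=\Delta\rho-s\Delta\nabla\cdot(\hat\rho\nabla\psi)$. Solving yields
\begin{align*}
\nabla\cdot(\hat\rho\nabla\partial_t\psi)~=~-\Delta\rho+s\Delta\nabla\cdot(\hat\rho\nabla\psi)-s\nabla\cdot(\hat\Delta\rho\cdot\nabla\psi)\ .
\end{align*}
I expect this compatibility computation to be the main obstacle, because $\partial_t\psi$ is not governed by an evolution equation of its own but only indirectly via the mixed partial derivatives.

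Now I would pair the displayed identity with $\psi$ in the $\pi$-inner product, using integration by parts repeatedly and the self-adjointness of $\Delta$. This gives
\begin{align*}
\langle\nabla\psi,\nabla\partial_t\psi\rangle_\rho~=~-\langle\nabla\rho,\nabla\psi\rangle_\pi+s\langle\nabla\Delta\psi,\nabla\psi\rangle_\rho-s\langle\hat\Delta\rho\cdot\nabla\psi,\nabla\psi\rangle_\pi\ .
\end{align*}
Adding half of $\partial_t\cA$ and $\partial_s\cH$, the term $-\langle\nabla\rho,\nabla\psi\rangle_\pi$ cancels exactly against $\partial_s\cH$, the two $\langle\hat\Delta\rho\cdot\nabla\psi,\nabla\psi\rangle_\pi$ terms combine to $-\tfrac{s}{2}\langle\hat\Delta\rho\cdot\nabla\psi,\nabla\psi\rangle_\pi$, and the term $s\langle\nabla\Delta\psi,\nabla\psi\rangle_\rho=s\langle\hat\rho\nabla\psi,\nabla\Delta\psi\rangle_\pi$ remains, which together match $-s\cB(\rho,\psi)$ by the definition of $\cB$. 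This proves the claimed identity.
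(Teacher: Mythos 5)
Your proposal is correct and follows essentially the same route as the paper's proof: the same split of $\partial_t\cA$ into the $\partial_t\hat\rho$-piece (computed via $\partial_t\hat\rho_t^s = s\,\hDelta\rho_t^s$) and the $\partial_t\psi$-piece, with the latter handled exactly as in the paper by equating $\partial_t\partial_s\rho_t^s=\partial_s\partial_t\rho_t^s$ to eliminate $\partial_t\psi_t^s$, followed by the same integrations by parts and cancellation against $\partial_s\cH$. No gaps; the factor bookkeeping matches the paper's $I_1+I_2$ computation.
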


\begin{proof}
First of all, we have
\begin{equation}\begin{aligned}\label{eq:deriv-ent}
\partial_s \cH(\rho_t^s)
  & = \bip{1 + \log \rho_t^s \ ,\,  \partial_s \rho_t^s}_{\pi}
\\& = - \bip{1 + \log \rho_t^s \ ,\,
		 \nabla \cdot ( \hrho_t^s \cdot \nabla \psi)}_{\pi}
\\& =  \bip{ \nabla \log \rho_t^s \ ,\,
   				\hrho_t^s \cdot \nabla \psi_t^s}_\pi
\\& =  \bip{ \nabla \rho_t^s\ ,\, \nabla \psi_t^s}_{\pi}
\\& =  - \bip{\psi_t^s \ ,\,\Delta \rho_t^s}_\pi \;.
\end{aligned}\end{equation}
Furthermore, 
\begin{align*}
 \frac12 \partial_t \cA(\rho_t^s, \psi_t^s)
&  =  \bip{ \hrho_t^s \cdot \partial_t\nabla \psi_t^s\ ,\, 
    			\nabla \psi_t^s}_\pi
 + \frac12
			 \bip{ \partial_t\hrho_t^s \cdot \nabla \psi_t^s\ ,\,
    			\nabla \psi_t^s }_\pi
 \\& =: I_1 + I_2\;.			
\end{align*}
In order to simplify $I_1$ we claim that 
\begin{align} 
 \label{eq:partial-ts}
  -\nabla \cdot \big( (\partial_t \hrho_t^s) \cdot \nabla \psi_t^s \big) 
  - \nabla \cdot \big( \hrho_t^s \cdot \partial_t \nabla \psi_t^s \big)
   & =  \Delta \rho_t^s
    - s \Delta\big(\nabla \cdot (\hrho_t^s \cdot\nabla \psi_t^s)\big)\;,\\
  \label{eq:partial-t}
  \partial_t \hrho_t^s & = s \hDelta \rho_t^s\;.
\end{align}
To show \eqref{eq:partial-ts}, note that the left-hand side equals
$\partial_t \partial_s \rho_t^s$, while the right-hand side equals
$\partial_s \partial_t \rho_t^s$. The identity \eqref{eq:partial-t}
follows from a straightforward calculation.

Integrating by parts repeatedly and using \eqref{eq:deriv-ent},
\eqref{eq:partial-ts} and \eqref{eq:partial-t}, we obtain
\begin{align*}
  I_1 
  & = - \bip{ \psi_t^s\ ,\,
    		\nabla \cdot (\hrho_t^s \cdot \partial_t\nabla \psi_t^s)}_\pi
\\  & =   \bip{ \psi_t^s\ ,\, \Delta \rho_t^s}_{\pi}
	    - s \bip{ \psi_t^s\ ,\,
	 \Delta\big(\nabla \cdot (\hrho_t^s \nabla \psi_t^s)\big)}_\pi
  +  \bip{ \psi_t^s\ ,\,
  		   \nabla \cdot
		    \big(  (\partial_t \hrho_t^s) \nabla \psi_t^s \big) }_\pi
\\  & =  - \partial_s \cH(\rho_t^s)
		 +  s  \bip{ \hrho_t^s \cdot \nabla \psi_t^s\ ,\,
   			\nabla \Delta \psi_t^s}_\pi  
	  - s  \bip{\hDelta \rho_t^s \cdot \nabla\psi_t^s\ ,\,
	    \nabla \psi_t^s}_{\pi}\;.
\end{align*}
Taking into account that
\begin{align*}
 I_2 = \frac{s}2 
				\bip{\hDelta \rho_t^s \cdot \nabla \psi_t^s\ ,\,
    					\nabla \psi_t^s }_\pi\;,
\end{align*}
the result follows by summing the expressions for $I_1$ and $I_2$.
\end{proof}

The evolution variational inequality \eqref{eq:EVI} has been extensively studied in the theory of gradient flows in metric spaces \cite{AGS08}. It readily implies a number of interesting properties for the associated gradient flow (see, e.g.,  \cite[Section 3]{DS08}). Among them we single out the following $\kappa$-contractivity property.

\begin{proposition}[$\kappa$-Contractivity of the heat flow]\label{prop:contractility}
Let $(\cX,K)$ be an irreducible and reversible Markov kernel satisfying $\Ric(K) \geq \kappa$ for some $\kappa \in \R$. Then the associated continuous time Markov semigroup $(P_t)_{t \geq 0}$ satisfies 
\begin{align*}
 \cW(P_t \rho, P_t \sigma) \leq e^{-\kappa t} 
  \cW(\rho, \sigma)
\end{align*}
for all $\rho, \sigma \in \PX$ and $t \geq 0$.
\end{proposition}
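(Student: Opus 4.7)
The plan is to deduce the contraction estimate directly from the evolution variational inequality \eqref{eq:EVI} provided by Theorem~\ref{thm:Ric-equiv}(2), following the well-established strategy for EVI-gradient flows in metric spaces. The point is that contractivity of the flow is a general soft consequence of the EVI, independent of the specific geometry of $(\PX, \cW)$.

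Concretely, fix $\rho, \sigma \in \PX$ and let $f(s,t) := \cW^2(P_s \rho, P_t \sigma)$. Applying \eqref{eq:EVI} to the heat flow starting at $\rho$ with reference density $\nu = P_t \sigma$ gives
\begin{align*}
  \frac{1}{2} \frac{\mathrm{d}^+}{\mathrm{d} s} f(s,t) + \frac{\kappa}{2} f(s,t) \leq \cH(P_t \sigma) - \cH(P_s \rho),
\end{align*}
while exchanging the roles of $\rho$ and $\sigma$ and using the symmetry of $\cW^2$ gives
\begin{align*}
  \frac{1}{2} \frac{\mathrm{d}^+}{\mathrm{d} t} f(s,t) + \frac{\kappa}{2} f(s,t) \leq \cH(P_s \rho) - \cH(P_t \sigma).
\end{align*}
Summing these two inequalities cancels the entropy terms exactly. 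Specialising to the diagonal $s = t$ and writing $g(t) := \cW^2(P_t \rho, P_t \sigma)$, this produces a dissipative inequality of the form
\begin{align*}
  \frac{1}{2} \ddtr g(t) + \kappa\, g(t) \leq 0,
\end{align*}
from which Gronwall's lemma yields $g(t) \leq e^{-2\kappa t} g(0)$, and hence the desired bound $\cW(P_t\rho, P_t\sigma) \leq e^{-\kappa t}\cW(\rho, \sigma)$ upon taking square roots.

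The main technical obstacle is that $\frac{\mathrm{d}^+}{\mathrm{d} t}$ is a Dini upper derivative, and it is not immediate that the two separate one-sided partial derivatives in $s$ and $t$ above can simply be added and then specialised to the diagonal. To handle this rigorously I would work with the equivalent integrated form of \eqref{eq:EVI}, namely
\begin{align*}
  \frac{1}{2} e^{\kappa h} \cW^2(P_{s+h}\rho, \nu) - \frac{1}{2} \cW^2(P_s \rho, \nu) \leq \int_s^{s+h} e^{\kappa(r-s)} \bigl( \cH(\nu) - \cH(P_r \rho) \bigr) \mathrm{d} r,
\end{align*}
and its analogue in the second slot, and combine them along the diagonal using the absolute continuity of $t \mapsto P_t\rho$ with respect to $\cW$ (which follows from Theorem~\ref{thm:ma11-main}(3) and Proposition~\ref{prop:metricderivative}); this is exactly the route carried out in \cite[Theorem 4.0.4 and Section~4.3]{AGS08} and \cite[Section~3]{DS08}, and all the ingredients it requires are already available in the present setting. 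Everything else is routine.
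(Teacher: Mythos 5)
Your argument is correct and is essentially the paper's own route: the paper simply invokes \cite[Proposition 3.1]{DS08} (EVI$_\kappa$ implies $\kappa$-contractivity) for $\cH$ on $(\PX,\cW)$, using the EVI supplied by Theorem \ref{thm:Ric-equiv}. What you have written out — the doubling-of-variables argument with the integrated EVI to handle the Dini derivatives — is precisely the proof of that cited result, so your proposal just unpacks the black box the paper uses.
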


\begin{proof}
This follows by applying \cite[Proposition 3.1]{DS08} to the functional $\cH$  on the metric space $(\PX,\cW)$.
\end{proof}

\section{Examples}
\label{sec:examples} 

In this section we give explicit lower bounds on the \nl Ricci curvature
in several examples. Moreover, we present a simple criterion (see
Proposition \ref{prop:simplecriterion}) for proving \nl Ricci curvature
bounds. Although the assumptions seem restrictive, the criterion allows to obtain
the sharp Ricci bound for the discrete hypercube. Moreover, it can be
combined with the tensorisation result from Section
\ref{sec:tensorisation} in order to prove Ricci bounds in other
nontrivial situations.  To get started let us consider a particularly
simple example.

\begin{example}[The complete graph]
  Let $\cK^n$ denote the complete graph on $n$ vertices and let $K_n$
  be the simple random walk on $\cK^n$ given by the transition kernel
  $K(x,y)=\frac{1}{n}$ for all $x,y\in\cK^n$. Note that in this case
  $\pi$ is the uniform measure. We will show that
  $\Ric(K_n)\geq\frac12+\frac{1}{2n}$. In view of Theorem
  \ref{thm:Ric-equiv} we have to show
  $\cB(\rho,\psi)\geq(\frac12+\frac{1}{2n})\cA(\rho,\psi)$ for all
  $\rho\in\PXs$ and $\psi\in\R^{\cX}$. Recall the definition
  \eqref{eq:B} of the quantity $\cB$. We calculate explicitly :
  \begin{align*}
    \bip{\hrho  \cdot\,\nabla \psi\ ,\, \nabla \Delta\psi}_\pi~&=~\frac12\frac{1}{n^3}\sum\limits_{x,y,z\in\cX}\hat\rho(x,y)\nabla\psi(y,x)\bigg[\nabla\psi(x,z)-\nabla\psi(y,z)\bigg]\\
    &=~-\frac12\frac{1}{n^2}\sum\limits_{x,y}\hat\rho(x,y)\big(\nabla\psi(x,y)\big)^2~=~-\cA(\rho,\psi)\
    .
  \end{align*}
  With the notation
  $\hat\rho_i(x,y)=\partial_i\theta(\rho(x),\rho(y))$ and using
  equation \eqref{eq:tricktheta1} we obtain further
  \begin{align*}
    \bip{\hDelta\rho\, \cdot\, \nabla \psi, \nabla \psi }_\pi~&=~\frac12\frac{1}{n^3}\sum\limits_{x,y,z}\big(\nabla\psi(x,y)\big)^2\bigg[\hat\rho_1(x,y)(\rho(z)-\rho(x))\\
    &\qquad\qquad\qquad\qquad\qquad\qquad+\hat\rho_2(x,y)(\rho(z)-\rho(y))\bigg]\\
    &=~-\cA(\rho,\psi) + \frac12\frac{1}{n^3}\sum\limits_{x,y,z}\big(\nabla\psi(x,y)\big)^2\bigg[\hat\rho_1(x,y)\rho(z)\\
    &\qquad\qquad\qquad\qquad\qquad\qquad\qquad\qquad+\hat\rho_2(x,y)\rho(z)\bigg]\
    .
  \end{align*}
  Keeping only the terms with $z=x$ (resp. $z=y$) in the last sum and
  using \eqref{eq:tricktheta1} again we see
  \begin{align*}
    \bip{\hDelta\rho\, \cdot\, \nabla \psi, \nabla \psi
    }_\pi~\geq~\left(\frac{1}{n}-1\right)\cA(\rho,\psi)\ .
  \end{align*}
  Summing up we obtain $\cB\geq(\frac12(\frac{1}{n}-1)+1)\cA$, which
  yields the claim.
\end{example}
\medskip

For the rest of this section we let $K$ be an irreducible and reversible
Markov kernel on a finite set $\cX$.  In order to state the criterion
and to perform calculations, it will be convenient to write a Markov
chain in terms of allowed moves rather than jumps from point to point.
 
Let $G$ be a set of maps from $\cX$ to itself (the allowed moves) and
consider a function $c:\cX\times G\to\R_+$ (representing the jump
rates). 

\begin{definition}\label{def:mappingrepresentation}
  We call the pair $(G,c)$ a \emph{mapping representation} of $K$ if
  the following properties hold:
  \begin{enumerate}
  \item The generator $\Delta = K-Id$ can be written in the form
    \begin{align}\label{eq:generator-mappingform}
      \Delta\psi(x)~=~\sum\limits_{\delta\in G} \nabla_\delta \psi(x)
      c(x,\delta)\ ,
    \end{align}
    where
    \begin{align*}
      \nabla_\delta \psi(x) = \psi(\delta x)-\psi(x)\;.
    \end{align*}
  \item For every $\delta\in G$ there exists a unique $\delta^{-1}\in
    G$ satisfying $\delta^{-1}(\delta(x))=x$ for all $x$ with
    $c(x,\delta)>0$.
  \item For every $F:\cX\times G\to\R$ we have
    \begin{align}\label{eq:rev}
      \sum\limits_{x\in\cX,\delta\in
        G}F(x,\delta)c(x,\delta)\pi(x)~=~\sum\limits_{x\in\cX,\delta\in
        G}F(\delta x,\delta^{-1})c(x,\delta)\pi(x)\ .
    \end{align}
  \end{enumerate}
\end{definition}

\begin{remark}\label{rem:CDPP}
  This definition is close in spirit to the recent work \cite{CDPP09},
  where $\Gamma_2$-type calculations have been performed in order to
  prove strict convexity of the entropy along the heat flow in a
  discrete setting. Here, we essentially compute the second
  derivatives of the entropy along $\cW$-geodesics. Since the geodesic
  equations are more complicated than the heat equation, the
  expressions that we need to work with are somewhat more involved.
\end{remark}

Every irreducible, reversible Markov chain has a mapping
representation. In fact, an explicit mapping representation can be
obtained as follows. For $x,y\in\cX$ consider the bijection
$t_{\{x,y\}}:\cX\to\cX$ that interchanges $x$ and $y$ and keeps all
other points fixed. Then let $G$ be the set of all these
``transpositions'' and set $c(x,t_{\{x,y\}})=K(x,y)$ and
$c(x,t_{\{y,z\}})=0$ for $x\notin \{y,z\}$. Then $(G,c)$ defines a
mapping representation. However, in examples it is often more natural
to work with a different mapping representation involving a smaller
set $G$, as we shall see below.

It will be useful to formulate the expressions for $\cA$ and $\cB$ in
this formalism. 
For this purpose, we note that \eqref{eq:generator-mappingform} implies that \begin{align*}
  \sum\limits_{y \in \cX}F(x,y)K(x,y)~=~\sum\limits_{\delta\in G}F(x,\delta x)c(x,\delta)
\end{align*}
for any $F:\cX\times\cX\to\R$ vanishing on the diagonal.
As a consequence we obtain
\begin{align}\label{eq:A-mappingform}
  \cA(\rho,\psi)~=~\frac12 \sum\limits_{x\in\cX,\delta\in G}\big(\nabla_\delta\psi(x)\big)^2\hat\rho(x,\delta x)c(x,\delta)\pi(x)
\end{align}
and 
\begin{equation}\begin{aligned}
\label{eq:T1-mappingform}
  \ip{\hrho\nabla\psi,\nabla \Delta\psi}_\pi~&=~\frac12\sum\limits_{x\in\cX}\sum\limits_{\delta,\eta\in G}\nabla_\delta\psi(x)\bigg[\nabla_\eta\psi(\delta x)c(\delta x,\eta)\\
&\qquad\qquad\qquad -\nabla_\eta\psi(x)c(x,\eta)\bigg]\hat{\rho}(x,\delta x) c(x,\delta)\pi(x)\ .
\end{aligned}\end{equation}
Setting for convenience
$\partial_i\theta(\rho(x),\rho(y))=:\hat\rho_i(x,y)$ for $i=1,2$ we
further get
\begin{equation}\begin{aligned}
\label{eq:T2-mappingform}
  \frac12\ip{\hat\Delta\rho \nabla\psi,\nabla\psi}_\pi
  ~&=~\frac{1}{4}\sum\limits_{x,\delta,\eta}\big(\nabla_\delta\psi(x)\big)^2\bigg[\hat{\rho}_1(x,\delta
  x)\nabla_\eta\rho(x)c(x,\eta)\\
  &\qquad\qquad+\hat{\rho}_2(x,\delta x)\nabla_\eta\rho(\delta x)c(\delta
  x,\eta)\bigg]c(x,\delta)\pi(x)\ .
\end{aligned}\end{equation}
Now the expression for $\cB(\rho,\psi)$ is obtained as the difference
of the preceding two expressions.

We are now ready to state the announced criterion, which shall be used in Examples \ref{ex:circle} and \ref{ex:hypercube} below. Intuitively, condition ii) expresses a certain `spatial homogeneity', saying that the jump rate in a given direction is the same before and after another jump. 

\begin{proposition}\label{prop:simplecriterion}
  Let $K$ be an irreducible and reversible Markov kernel on a finite
  set $\cX$ and let $(G,c)$ be a mapping representation. Consider the
  following conditions:
  \begin{itemize}
  \item[\emph{i)}] $\delta\circ\eta~=~\eta\circ\delta$~, for all $\delta,\eta\in G$,
  \item[\emph{ii)}] $c(\delta x,\eta)~=~c(x,\eta)$~, for all $x\in\cX,\ \delta,\eta\in G$,
  \item[\emph{iii)}] $\delta\circ\delta=id$~, for all $\delta\in G$.
  \end{itemize}
  If i) and ii) are satisfied, then $\Ric(K)\geq 0$. If moreover
  iii) is satisfied, then $\Ric(K)\geq 2C$, where
  \begin{align*}
    C:=\min\{c(x,\delta): x\in\cX,\delta\in G \text{ such that
    }c(x,\delta)>0 \}\ .
  \end{align*}

\end{proposition}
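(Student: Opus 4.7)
The plan is to verify the condition $\cB(\rho,\psi)\geq\kappa\,\cA(\rho,\psi)$ for all $\rho\in\PXs$ and $\psi\in\R^\cX$, which, via the Hessian characterisation (4) in Theorem~\ref{thm:Ric-equiv}, yields $\Ric(K)\geq\kappa$ with $\kappa=0$ under (i)--(ii) and $\kappa=2C$ if (iii) also holds. First I would collect the algebraic simplifications provided by the mapping representation. Condition (ii), combined with the irreducibility of $K$, forces $c(x,\eta)=:c_\eta$ to be independent of $x$, since the group generated by $G$ acts transitively on $\cX$. The commutativity in (i) gives $\nabla_\eta\psi(\delta x)-\nabla_\eta\psi(x)=v-u$, where $u:=\nabla_\delta\psi(x)$ and $v:=\nabla_\delta\psi(\eta x)$. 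Substituting into \eqref{eq:T1-mappingform}--\eqref{eq:T2-mappingform} and using \eqref{eq:tricktheta1} to eliminate $a\,\partial_1\theta(a,b)+b\,\partial_2\theta(a,b)=\theta(a,b)$, one obtains, with $a:=\rho(x)$, $b:=\rho(\delta x)$, $c:=\rho(\eta x)$, $d:=\rho(\delta\eta x)$, the representation
\begin{equation*}
\cB(\rho,\psi)=\tfrac14\sum_{x,\delta,\eta}u^2\bigl[c\,\partial_1\theta(a,b)+d\,\partial_2\theta(a,b)+\theta(a,b)\bigr]c_\eta c_\delta\pi(x)-\tfrac12\sum_{x,\delta,\eta}uv\,\theta(a,b)\,c_\eta c_\delta\pi(x).
\end{equation*}

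For the first assertion I would apply the concavity inequality \eqref{eq:4term}, giving $c\,\partial_1\theta(a,b)+d\,\partial_2\theta(a,b)\geq\theta(c,d)$, and control the mixed term by Young's inequality $-uv\geq-\tfrac12(u^2+v^2)$. These two estimates reduce $\cB\geq 0$ to the symmetrisation identity
\begin{equation*}
\sum_{x,\delta,\eta}v^2\,\theta(a,b)\,c_\eta c_\delta\pi(x)=\sum_{x,\delta,\eta}u^2\,\theta(c,d)\,c_\eta c_\delta\pi(x),
\end{equation*}
which I would establish by substituting $y=\eta x$, invoking the detailed-balance identity $c_\eta\pi(x)=c_{\eta^{-1}}\pi(\eta x)$ that follows from \eqref{eq:rev} and (ii), and reindexing $\eta\mapsto\eta^{-1}$ using that $G=G^{-1}$. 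Commutativity (i) ensures that $\rho(\delta\eta^{-1}y)=\rho(\eta^{-1}\delta y)$, so the new $\theta$-argument is exactly $\theta(c,d)$ after relabelling.

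For the second assertion I would split the double sum according to $\eta=\delta$ versus $\eta\neq\delta$. On the diagonal, (iii) gives $v=-u$ and $(c,d)=(b,a)$, so both terms in $\cB$ become purely quadratic in $u$; after another application of \eqref{eq:4term} in the swapped form $b\,\partial_1\theta(a,b)+a\,\partial_2\theta(a,b)\geq\theta(a,b)$, the diagonal contribution collapses to $\sum_{x,\delta}u^2\,\theta(a,b)\,c_\delta^2\,\pi(x)$, and the pointwise bound $c_\delta\geq C$ (on the support) turns this into $2C\cdot\cA(\rho,\psi)$. For the off-diagonal part, the Young-plus-symmetrisation argument of the previous paragraph applies verbatim: under (iii) one has $\delta^{-1}=\delta$, so the condition $\eta\neq\delta$ is stable under $\eta\mapsto\eta^{-1}$, and the corresponding sum is again nonnegative. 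Combining yields $\cB\geq 2C\,\cA$.

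The main obstacle is the symmetrisation. The product $uv$ cannot be controlled pointwise, and after Young's inequality one is left with two sums of squares whose weights must be reshuffled via the detailed-balance factor coming from $\pi(\eta^{-1}y)$ together with the bijection $\eta\mapsto\eta^{-1}$ on $G$. Checking that after these substitutions one recovers the original sum with $\theta(a,b)$ replaced by $\theta(c,d)$, and that the diagonal/off-diagonal split is compatible with all these operations (which is precisely where (iii) enters), is the only genuinely subtle step; the rest of the argument is then bookkeeping followed by Theorem~\ref{thm:Ric-equiv}.
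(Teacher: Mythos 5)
Your proposal is correct and follows essentially the paper's own route: both reduce the statement via Theorem~\ref{thm:Ric-equiv}(4) to $\cB(\rho,\psi)\geq\kappa\,\cA(\rho,\psi)$, use the mapping-representation formulas together with \eqref{eq:tricktheta1}, \eqref{eq:4term} and the reversibility identity \eqref{eq:rev}, and extract the constant $2C$ from the $\eta=\delta$ terms; your ``Young plus symmetrisation'' step is algebraically the same as the paper's completed square $\bigl(\nabla_\delta\psi(\eta x)-\nabla_\delta\psi(x)\bigr)^2\geq 0$ after the reversibility change of variables. The only point to tidy up is the symmetrisation itself: rather than invoking a pointwise detailed-balance identity $c_\eta\pi(x)=c_{\eta^{-1}}\pi(\eta x)$ (which is not literally what \eqref{eq:rev} provides, and needs neither $G=G^{-1}$ nor injectivity arguments), simply apply \eqref{eq:rev} with the test function $F(x,\eta)=\bigl(\nabla_\delta\psi(\eta x)\bigr)^2\hrho(x,\delta x)$ for each fixed $\delta$ (multiplied by $\one_{\eta\neq\delta}$ in the off-diagonal case, where condition iii) ensures $\eta^{-1}=\eta$ preserves the restriction), which yields the identity directly.
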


\begin{remark}\label{rem:}
Note that requiring \emph{i)} and \emph{iii)} simultaneously imposes a very strong restriction on the graph associated with $K$. We prefer to state the result in this form in order to give a unified proof which applies both to the discrete circle and the discrete hypercube, with optimal constant in the latter case.
\end{remark}

\begin{proof}[Proof of Proposition \ref{prop:simplecriterion}]
  In view of Theorem \ref{thm:Ric-equiv} it suffices to show that
  $\cB(\rho,\psi)\geq 0$ resp. $\cB(\rho,\psi)\geq 2C\cA(\rho,\psi)$
  for all $\rho\in\PXs$ and $\psi\in\R^{\cX}$. First recall that
  \begin{align*}
  \cB(\rho,\psi)~&=~-\ip{\hrho\nabla\psi,\nabla \Delta\psi}_\pi+\frac12\ip{\hat\Delta\rho \nabla\psi,\nabla\psi}_\pi~=:~ T_1 +T_2\ .
\end{align*}
Using \eqref{eq:T1-mappingform} and conditions i) and ii) we can write
the first summand as
\begin{align*}
  T_1~&=~-\frac12\sum\limits_{x,\delta,\eta}\nabla_{\delta}\psi(x)\bigg[\nabla_{\eta}\psi(\delta x)-\nabla_{\eta}\psi(x)\bigg]\hat{\rho}(x,\delta x) c(x,\delta)c(x,\eta)\pi(x)\\
     &=~-\frac12\sum\limits_{x,\delta,\eta}\nabla_{\delta}\psi(x)\bigg[\nabla_{\delta}\psi(\eta x)
          -\nabla_{\delta}\psi(x)\bigg]\hat{\rho}(x,\delta x)c(x,\delta)c(x,\eta)\pi(x)\ .
\end{align*}
In a similar way we shall write the second summand. Starting from
\eqref{eq:T2-mappingform} and invoking ii) and equation
\eqref{eq:tricktheta1} from Lemma \ref{lem:thetatricks}, we
obtain
\begin{align*}
T_2~&=~\frac14\sum\limits_{x,\delta,\eta}\big(\nabla_{\delta}\psi(x)\big)^2\bigg[\hat{\rho}_1(x,\delta x)\nabla_{\eta}\rho(x)\\
&\qquad\qquad\qquad\qquad\qquad+\hat{\rho}_2(x,\delta x)\nabla_{\eta}\rho(\delta x)\bigg]c(x,\delta)c(x,\eta)\pi(x)\\
   &=~\frac14\sum\limits_{x,\delta,\eta}\big(\nabla_{\delta}\psi(x)\big)^2\bigg[\hat{\rho}_1(x,\delta x)\rho({\eta}x)
 \\&\qquad\qquad\qquad\qquad\qquad
 +\hat{\rho}_2(x,\delta x)\rho(\eta\delta x) - \hat{\rho}(x,\delta x)\bigg]c(x,\delta)c(x,\eta)\pi(x)\ .
 \end{align*}
 Using the reversibility of $K$ in the form of \eqref{eq:rev}, and
 again condition ii) we can write
\begin{align*}
T_2~&=~\frac14\sum\limits_{x,\delta,\eta}\bigg(\big(\nabla_{\delta}\psi(\eta x)\big)^2\bigg[\hat{\rho}_1(\eta x,\delta\eta x)\rho(x) + \hat{\rho}_2(\eta x,\delta\eta x)\rho(\delta x)\bigg] \\
&~\qquad\qquad -\big(\nabla_{\delta}\psi(x)\big)^2\hat\rho(x,\delta x)\bigg) c(x,\delta)c(x,\eta)\pi(x)\ .
\end{align*}
Adding a zero we obtain
\begin{align*}
  T_2~&=~\frac14\sum\limits_{x,\delta,\eta}\left(\big(\nabla_{\delta}\psi(\eta x)\big)^2 -\big(\nabla_{\delta}\psi(x)\big)^2\right)\hat{\rho}(x,\delta x)c(x,\delta)c(x,\eta)\pi(x)\\
  &\quad + \frac14\sum\limits_{x,\delta,\eta}\big(\nabla_{\delta}\psi(\eta x)\big)^2\bigg[\hat{\rho}_1(\eta x,\delta\eta x)\rho(x)+\hat{\rho}_2(\eta x,\delta\eta x)\rho(\delta x)\\
  & \qquad\qquad\qquad\qquad\qquad-\hat{\rho}(x,\delta x)\bigg]c(x,\delta)c(x,\eta)\pi(x)\\
  &=:~T_3 + T_4\ .
\end{align*}
Invoking the inequality \eqref{eq:4term} from Lemma
\ref{lem:thetatricks}, we immediately see that $T_4\geq
0$. Hence we get
\begin{align*}
 \cB(\rho,\psi)~&\geq~T_1 + T_3\\
&=~\frac14\sum\limits_{x,\delta,\eta}\big(\nabla_{\delta}\psi(\eta x)-\nabla_{\delta}\psi(x)\big)^2\hat{\rho}(x,\delta x) c(x,\delta)c(x,\eta)\pi(x)\\
&\geq 0\ .
\end{align*}

If moreover condition iii) is satisfied, the latter estimate can be
improved by keeping only the terms with $\eta=\delta$ in the last
sum. We thus obtain
\begin{align*}
  \cB(\rho,\psi)~&\geq~\frac{C}{4}\sum\limits_{x,\delta}\big(2\nabla_\delta\psi(x)\big)^2\hat{\rho}(x,\delta x)c(x,\delta)\pi(x)\\
  &=~2C\cA(\rho,\psi)\ .
\end{align*}
\end{proof}
Let us now consider some examples to which Proposition
\ref{prop:simplecriterion} can be applied.

\begin{example}[The discrete circle]
\label{ex:circle}
Consider the simple random walk on the discrete circle $C_n=\Z/n\Z$ of $n$ sites
given by the transition kernel $K(m,m-1)=K(m,m+1)=\frac12$ for $m\in
C_n$. We have the following mapping representation for $K$. Set
$G=\{+,-\}$ where $+(m)=m+1$ and $-(m)=m-1$ and let
$c(m,+)=c(m,-)=\frac12$ for all $m$. Proposition
\ref{prop:simplecriterion}  immediately yields that $\Ric(K)\geq
0$.
\end{example}

\begin{example}[The discrete hypercube]
\label{ex:hypercube}
Let $\cQ^n=\{0,1\}^n$ be the hypercube endowed with the usual graph
structure and let $K_n$ be the kernel of the simple random walk on $\cQ^n$. The natural mapping representation is given by $G=\{\delta_1,\dots,\delta_n\}$, where $\delta_i:\cQ^n\to\cQ^n$ is the map that flips the $i$-th
coordinate, and $c(x,\delta_i)=\frac{1}{n}$ for all $x\in\cQ^n$. Here
the criterion from Proposition \ref{prop:simplecriterion} yields
$\Ric(K_n)\geq\frac{2}{n}$. We shall see in Section
\ref{sec:inequalities} that this bound is optimal.

Alternatively we can use the fact that $\cQ^n$ is a product space and
use the tensorisation property Theorem \ref{thm:tensorisation}
below. This will allow to consider asymmetric random walks on the
hypercube as well.
\end{example}

\section{Basic constructions}
\label{sec:tensorisation}

In this section we show how \nl Ricci curvature bounds transform under
some basic operations on a Markov kernel. The main result is Theorem
\ref{thm:tensorisation}, which yields Ricci bounds for product
chains. We start with a simple result, that shows how Ricci bounds
behave under adding laziness.

Let $K$ be an irreducible and reversible Markov kernel on a finite set
$\cX$.  For $\lambda \in (0,1)$ we consider the \emph{lazy} Markov kernel
defined by $K_\lambda := (1- \lambda) I + \lambda K$. Clearly,
$K_\lambda$ is irreducible and reversible with the same invariant
measure $\pi$. With this notation, we have the following result:

\begin{proposition}[Laziness]\label{prop:laziness}
  Let $\lambda \in (0,1)$. If $\Ric(K) \geq\kappa$ for some $\kappa
  \in \R$, then the lazy kernel $K_\lambda$ satisfies
  \begin{align*}
    \Ric(K_\lambda) \geq \lambda \kappa\;.
  \end{align*}
\end{proposition}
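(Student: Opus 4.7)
The key observation is that the generator of $K_\lambda$ is $\Delta_\lambda := K_\lambda - I = \lambda(K-I) = \lambda\Delta$, while the invariant measure and the logarithmic mean $\hat\rho$ are unchanged. Everything should therefore scale in a transparent way, and the result will follow from a direct comparison of the two metrics $\cW_{K_\lambda}$ and $\cW_K$.

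The first step is to establish the identity
\begin{align*}
\cW_{K_\lambda}(\rho_0,\rho_1)^2 ~=~ \frac{1}{\lambda}\cW_K(\rho_0,\rho_1)^2 \qquad \text{for all } \rho_0,\rho_1 \in \PX.
\end{align*}
To see this, note that since the diagonal terms in \eqref{eq:cont} and in the definition of the action vanish, only the off-diagonal values of $K_\lambda$ (which equal $\lambda K$) contribute. Consequently, a pair $(\rho,\psi)$ belongs to $\CE_1^{K_\lambda}(\rho_0,\rho_1)$ if and only if $(\rho,\lambda\psi)$ belongs to $\CE_1^{K}(\rho_0,\rho_1)$, and the respective actions satisfy
\begin{align*}
\cA_{K_\lambda}(\rho,\psi) ~=~ \lambda\cA_K(\rho,\psi) ~=~ \tfrac{1}{\lambda}\cA_K(\rho,\lambda\psi).
\end{align*}
Taking infima over $\CE_1^{K_\lambda}(\rho_0,\rho_1)$ gives the claimed scaling of the squared distances.

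The second step exploits that $\cW_{K_\lambda} = \lambda^{-1/2}\cW_K$, so the two metrics define the same notion of constant speed geodesic: a curve $(\gamma_t)_{t\in[0,1]}$ is a constant speed geodesic for $\cW_{K_\lambda}$ iff it is one for $\cW_K$. Given any such geodesic, the assumption $\Ric(K)\geq\kappa$ yields
\begin{align*}
\cH(\gamma_t) ~\leq~ (1-t)\cH(\gamma_0) + t\cH(\gamma_1) - \tfrac{\kappa}{2}\,t(1-t)\,\cW_K(\gamma_0,\gamma_1)^2.
\end{align*}
Substituting $\cW_K(\gamma_0,\gamma_1)^2 = \lambda\,\cW_{K_\lambda}(\gamma_0,\gamma_1)^2$ replaces the constant $\kappa$ by $\lambda\kappa$ and the metric by $\cW_{K_\lambda}$, which is exactly the inequality required by Definition~\ref{def:Ricci} for $\Ric(K_\lambda)\geq\lambda\kappa$.

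There is essentially no obstacle: the entire argument is a scaling identity, and its validity relies only on the fact that $\hat\rho$ depends on $\rho$ but not on $K$, and that $K_\lambda$ and $K$ differ only by a scalar on off-diagonal entries. One mildly delicate point worth verifying is that the set of constant speed geodesics really coincides for the two metrics, but this is immediate from $\cW_{K_\lambda}=\lambda^{-1/2}\cW_K$.
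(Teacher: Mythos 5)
Your proof is correct, but it takes a genuinely different route from the paper. You establish the global scaling identity $\cW_{K_\lambda}^2=\lambda^{-1}\cW_K^2$ (via the bijection $(\rho,\psi)\mapsto(\rho,\lambda\psi)$ between the admissible classes, which is valid because only off-diagonal entries of the kernel enter the action and the continuity equation, and because $\pi$ and $\hat\rho$ are unchanged), note that rescaling a metric by a constant preserves the class of constant speed geodesics, and then verify the convexity inequality of Definition~\ref{def:Ricci} directly, with $\cW_K^2=\lambda\,\cW_{K_\lambda}^2$ converting the constant $\kappa$ into $\lambda\kappa$. The paper instead argues at the infinitesimal level: it invokes the characterisation of Theorem~\ref{thm:Ric-equiv} ($\Ric(K)\geq\kappa$ iff $\cB\geq\kappa\cA$ on $\PXs$) and checks the scalings $\cA_\lambda=\lambda\cA$ and $\cB_\lambda=\lambda^2\cB$, so that $\cB_\lambda-\lambda\kappa\cA_\lambda=\lambda^2(\cB-\kappa\cA)\geq0$. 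Your argument is more elementary in that it bypasses the Hessian/Bochner machinery entirely and yields the standalone and reusable fact that the lazy metric is just a rescaling of the original one; the paper's argument is shorter given the tools already developed, avoids any discussion of geodesics, and matches the strategy used for the other structural results (e.g.\ tensorisation), where no such exact metric identity is available. The only points you leave implicit, both immediate, are that $K_\lambda$ is again irreducible and reversible with the same invariant measure $\pi$ (so that $\cH$, $\cW_{K_\lambda}$ and Definition~\ref{def:Ricci} are applied with the same $\pi$), and that the substitution of $\cW_K^2=\lambda\cW_{K_\lambda}^2$ is an exact identity, hence valid for either sign of $\kappa$.
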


\begin{proof}
  Writing $\cA_\lambda$ and $\cB_\lambda$ to denote the lazy versions
  of $\cA$ and $\cB$, a direct calculation shows that
\begin{align*}
 \cA_\lambda(\rho, \psi) =  \lambda \cA(\rho, \psi)\;, \qquad
 \cB_\lambda(\rho, \psi) =  \lambda^2 \cB(\rho, \psi)\,
\end{align*}
for all $\rho \in \PXs$ and $\psi \in \R^{\cX}$. As a consequence, 
\begin{align*}
 \cB_\lambda(\rho, \psi) - \lambda \kappa \cA_\lambda(\rho, \psi)
   =  \lambda^2 (  \cB(\rho, \psi) - \kappa \cA(\rho, \psi) )\;.
\end{align*}
The result thus follows from Theorem \ref{thm:Ric-equiv}.
\end{proof}

We now give a tensorisation property of lower Ricci bounds with
respect to products of Markov chains. For $i=1,\dots,n$, let
$(\cX_i,K_i)$ be an irreducible, reversible finite Markov chain with
steady state $\pi_i$, and let $\alpha_i$ be a non-negative number
satisfying $\sum_{i=1}^n\alpha_i=1$. The product chain $K_\alpha$ on
the product space $\cX=\prod_i\cX_i$ is defined for $\xx = (x_1,
\ldots, x_n)$ and $\yy = (y_1, \ldots, y_n)$ by
\begin{align*}
  K_\alpha(\xx,\yy)~=~
  \begin{cases}
    \sum\limits_{i=1}^n\alpha_iK_i(x_i,x_i)\ , & \mbox{if } x_i = y_i\ \forall i  \ ,\\
    \alpha_iK_i(x_i,y_i)\ , & \mbox{if } x_i\neq y_i \mbox{ and } x_j=y_j\ \forall j\neq i\ ,\\
    0\ , & \mbox{otherwise .}
  \end{cases}
\end{align*}
Note that the steady state of $K_\alpha$ is the product
$\pi=\pi_1\otimes\dots\otimes\pi_n$ of the steady states of $K_i$.

\begin{theorem}[Tensorisation]\label{thm:tensorisation} 
  Assume that $\Ric(K_i)\geq\kappa_i$ for $i=1,\dots,n$. Then we have
  \begin{align*}
    \Ric(K_\alpha)~\geq~\min\limits_i\alpha_i\kappa_i\ .
  \end{align*}
\end{theorem}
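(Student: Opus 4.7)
My plan is to reduce the tensorisation to the Hessian/Bochner characterisation of \nl Ricci curvature and then exploit the product structure to decompose $\cB$ into coordinate-wise contributions. By Theorem~\ref{thm:Ric-equiv}, it suffices to show that $\cB(\rho,\psi) \geq \kappa \cA(\rho,\psi)$ for every $\rho \in \PXs$ and $\psi \in \R^{\cX}$, where $\kappa := \min_i \alpha_i \kappa_i$. The natural device is slicing: for each index $i$ and each $\hat x_i := (x_j)_{j\neq i} \in \cX_{\hat i} := \prod_{j\neq i}\cX_j$, I introduce the marginal $\bar\rho_i(\hat x_i) := \sum_{y \in \cX_i} \pi_i(y)\, \rho(\hat x_i,y)$, the conditional density $\rho^{\hat x_i}(y) := \rho(\hat x_i,y)/\bar\rho_i(\hat x_i)$ on $(\cX_i,\pi_i)$, the sliced function $\psi^{\hat x_i}(y) := \psi(\hat x_i,y)$, and write $\pi_{\hat i} := \bigotimes_{j\neq i}\pi_j$. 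Since $\rho \in \PXs$, all marginals are strictly positive and these objects are well-defined.

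Next I rewrite $\cA$ and $\cB$ in terms of the slicewise analogues $\cA^{(i)}, \cB^{(i)}$ associated with the factor kernels $K_i$. Since $K_\alpha$ permits jumps in only one coordinate at a time, and since by the $1$-homogeneity of $\theta$ one has $\hat\rho(\xx,\yy) = \bar\rho_i(\hat x_i)\, \hat{\rho^{\hat x_i}}(x_i,y_i)$ whenever $\yy$ differs from $\xx$ only in coordinate $i$, a direct computation gives
\begin{align*}
\cA(\rho,\psi)~=~\sum_{i=1}^n \alpha_i \sum_{\hat x_i \in \cX_{\hat i}} \pi_{\hat i}(\hat x_i)\, \bar\rho_i(\hat x_i)\, \cA^{(i)}\big(\rho^{\hat x_i},\psi^{\hat x_i}\big)\ .
\end{align*}
I then split $\cB(\rho,\psi) = \sum_i \cB_{ii} + \sum_{i\neq j} \cB_{ij}$ according to pairs of coordinate indices $(i,j)$, where $i$ is the coordinate of the $\xx \to \yy$ edge and $j$ that of the auxiliary $\xx \to \mathbf{z}$ or $\yy \to \mathbf{z}$ jump appearing in the formula \eqref{eq:B} for $\cB$.

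The diagonal pieces have the explicit form
\begin{align*}
\cB_{ii}(\rho,\psi)~=~\alpha_i^{\,2} \sum_{\hat x_i} \pi_{\hat i}(\hat x_i)\, \bar\rho_i(\hat x_i)\, \cB^{(i)}\big(\rho^{\hat x_i},\psi^{\hat x_i}\big)\ ,
\end{align*}
obtained by the same homogeneity trick, now also using the $0$-homogeneity of $\partial_1\theta,\partial_2\theta$ inherited from (A6). Applying $\Ric(K_i)\geq\kappa_i$ to each slice via Theorem~\ref{thm:Ric-equiv}(4) then bounds $\cB_{ii}$ from below by $\alpha_i^{\,2}\kappa_i$ times the slice-integrated $\cA^{(i)}$. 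For the off-diagonal pieces I work in a mapping representation $(G,c)$ of $K_\alpha$ inherited from mapping representations $(G_i,c_i)$ of the factors via $G=\bigsqcup_i G_i$ and $c(\xx,\delta)=\alpha_i c_i(x_i,\delta)$ for $\delta \in G_i$. For any pair $\delta \in G_i$, $\eta \in G_j$ with $i\neq j$, moves in distinct coordinates commute and the rates satisfy $c(\eta\xx,\delta)=c(\xx,\delta)$ since $\eta$ leaves coordinate $i$ untouched; hence conditions~(i) and~(ii) of Proposition~\ref{prop:simplecriterion} hold for each such off-diagonal pair. Applying its proof term-by-term to these pairs -- the rearrangement yielding $T_2 = T_3 + T_4$ with $T_4 \geq 0$ by \eqref{eq:4term}, and the combination $T_1 + T_3$ becoming a sum of squares thanks to commutativity -- gives $\cB_{ij}(\rho,\psi) \geq 0$ for every $i \neq j$.

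Combining the two bounds,
\begin{align*}
\cB(\rho,\psi)~\geq~\sum_i \alpha_i\cdot(\alpha_i\kappa_i) \sum_{\hat x_i}\pi_{\hat i}(\hat x_i)\,\bar\rho_i(\hat x_i)\,\cA^{(i)}\big(\rho^{\hat x_i},\psi^{\hat x_i}\big)~\geq~\kappa\, \cA(\rho,\psi)\ ,
\end{align*}
since $\alpha_i\kappa_i \geq \kappa$ for every $i$ and each slicewise $\cA^{(i)}$ is nonnegative. The main obstacle is the identification of $\cB_{ii}$: all three indices $\xx,\yy,\mathbf{z}$ must be recognised as lying in the same $i$-slice, and the marginal factor $\bar\rho_i(\hat x_i)$ must be carefully extracted from the nonlinear expression using homogeneity of $\theta$ and its partial derivatives, with both the $\hDelta\rho$ term and the $\nabla\Delta\psi$ term of $\cB$ handled in parallel. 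The off-diagonal nonnegativity is then essentially a direct application of the sum-of-squares argument already developed in Proposition~\ref{prop:simplecriterion}.
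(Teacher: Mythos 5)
Your proof is correct and follows essentially the same route as the paper's: reduction to $\cB(\rho,\psi)\geq\kappa\,\cA(\rho,\psi)$ via Theorem \ref{thm:Ric-equiv}, a mapping representation of $K_\alpha$ inherited from the factors, nonnegativity of the off-diagonal blocks by the commutation/rate-invariance argument of Proposition \ref{prop:simplecriterion}, and a slicewise decomposition of the diagonal blocks and of $\cA$ to which the factor curvature bounds are applied. The only (cosmetic) difference is that you normalise each slice into a genuine conditional density and extract the marginal factor via the homogeneity of $\theta$ and its partial derivatives, whereas the paper works with the unnormalised slice functions by an explicit abuse of notation.
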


\begin{proof}
  In view of Theorem \ref{thm:Ric-equiv} we have to show that for any
  $\rho\in\PXs$ and $\psi:\cX\to\R$ :
  \begin{align*}
    \cB(\rho,\psi)~\geq~(\min\limits_i\alpha_i\kappa_i)\cA(\rho,\psi)\ .
  \end{align*}

  We will use a mapping representation for the Markov kernel
  $K_\alpha$ as introduced in Section \ref{sec:examples}. Let
  $(G_i,c_i)$ be mapping representations of $K_i$ for
  $i=1,\dots,n$.  To each $\delta\in G_i$ we associate a map
  $\bar\delta:\cX\to\cX$ by letting $\delta$ act on the $i$-th
  coordinate. Let us set $G=\bigcup_i\{\bar\delta:\delta\in G_i\}$ and
  define $c:\cX\times G\to\R_+$ by
  \begin{align*}
    c(x,\bar\delta)~:=~\alpha_ic_i(x_i,\delta)\ ,\quad \mbox{for } \delta\in G_i\ .
  \end{align*}
  One easily checks that $(G,c)$ is a mapping representation of
  $K_\alpha$. Recalling the expressions
  \eqref{eq:T1-mappingform},\eqref{eq:T2-mappingform} which constitute
  $\cB$ in mapping representation we write
  \begin{align*}
    \cB(\rho,\psi)~=:~\sum\limits_{x\in\cX,\delta,\eta\in
      G}F(x,\delta,\eta)\ .
  \end{align*}
  Taking into account the product structure of the chain we can write
  \begin{align*}
    \cB(\rho,\psi)~=~\sum\limits_{i,j=1}^n\cB_{i,j}\quad \mbox{ with
    }\quad \cB_{i,j}~=~\sum\limits_{x\in\cX}\sum\limits_{\delta\in
      G_i,\eta\in G_j}F(x,\bar\delta,\bar\eta)\ .
  \end{align*}
  The proof will be finished if we prove the following two assertions:
  \begin{itemize}
  \item[i)] $\cB_{i,j}\geq 0$ for all $i\neq j$ ,
  \item[ii)] $\sum\limits_{i=1}^n\cB_{i,i}\geq(\min\limits_i
    \alpha_i\kappa_i) \cA(\rho,\psi)$ .
  \end{itemize}
  To show i), first note that for $\delta\in G_i$ and $\eta\in G_j$
  the maps $\bar\delta$ and $\bar\eta$ act on different coordinates if
  $i\neq j$. Thus we have
  $\bar\delta\circ\bar\eta=\bar\eta\circ\bar\delta$ and furthermore
  $c(\bar\delta x,\bar\eta)=c(x,\bar\eta)$. Note that these are
  precisely the properties used in the proof Proposition
  \ref{prop:simplecriterion}, hence the assertion here follows from
  the same arguments.

Let us now show ii). We set $\check\cX_i=\prod_{j\neq i}\cX_j$. For
$\check x_i \in \check\cX_i$ we let $\rho^{\check x_i},\psi^{\check
  x_i}:\cX_i\to\R$ denote the functions $\rho$ and $\psi$ where all
variables except $x_i$ are fixed to $\check x_i$. Note that
$\rho^{\check x_i}$ does not necessarily belong to $\cP(\cX_i)$, but
this will be irrelevant in the calculation below, and we shall use
expressions as $\cA(\rho^{\check x_i},\psi^{\check x_i})$ by abuse of
notation. We also set $\check\pi_i=\bigotimes_{j\neq i}\pi_j$. Using
once more the product structure of the chain $c$ we see :
\begin{align*}
  &\cA(\rho,\psi)\\
~&=~\frac12 \sum\limits_{i=1}^n\sum\limits_{x\in\cX,\delta\in G_i}\big(\nabla_{\bar\delta}\psi(x)\big)^2\hat\rho(x,\bar\delta x)c(x,\bar\delta)\pi(x)\\
                 &=~\frac{1}{2}\sum\limits_{i=1}^n\sum\limits_{\check x_i\in\check\cX_i}\sum\limits_{x_i\in\cX_i,\delta\in G_i}\big(\nabla_{\delta}\psi^{\check x_i}(x_i)\big)^2\widehat{\rho^{\check x_i}}(x_i,\delta x_i)\alpha_ic_i(x_i,\delta)\pi_i(x_i)\check\pi_i(\check x_i)\\
                 &=~\sum\limits_{i=1}^n\alpha_i\sum\limits_{\check x_i\in\check\cX_i}\cA_i(\rho^{\check x_i},\psi^{\check x_i})\check\pi_i(\check x_i)\ ,
\end{align*}
where $\cA_i$ (resp. $\cB_i$) denotes the function $\cA$ (resp. $\cB$)
associated with the $i$th chain.  Similarly we obtain
\begin{align*}
  \cB_{i,i}~&=~\alpha_i^2\sum\limits_{\check x_i\in\check\cX_i} \cB_i(\rho^{\check x_i},\psi^{\check x_i})\check\pi_i(\check x_i)\\
         &\geq~\alpha_i^2\kappa_i\sum\limits_{\check x_i\in\check\cX_i} \cA_i(\rho^{\check x_i},\psi^{\check x_i})\check\pi_i(\check x_i)\ ,
\end{align*}
where the last inequality holds by assumption on the curvature bound
for $K_i$. Summing over $i=1,\dots,n$ we obtain ii).
\end{proof}

We shall now apply Theorem \ref{thm:tensorisation} to asymmetric
random walks on the discrete hypercube. Here we consider the case
where $\theta$ is the logarithmic mean.  For $p,q\in(0,1)$ let
$K_{p,q}$ be the Markov kernel on the two point space $\{0,1\}$
defined by $K(0,1)=p,K(1,0)=q$. The asymmetric random walk is the
$n$-fold product chain on $\cQ^n$ denoted by $K_{p,q,n}$ where
$\alpha_i=\frac{1}{n}$). Note that the steady state of $K_{p,q,n}$ is
the Bernoulli measure
\begin{align*}
 \Big( (1-\lambda) \delta_{\{0\}} +  \lambda \delta_{\{1\}} 
  \Big)^{\otimes n}
\end{align*}
with parameter $\lambda = \frac{p}{p+q}$. We then have the following
bound on the \nl Ricci curvature:

\begin{proposition}\label{prop:hypercube}
For $n \geq 1$ we have 
\begin{align*}
\Ric(K_{p,q,n})~\geq~\frac{1}{n}\left(\frac{p+q}{2} + \sqrt{pq}\right)\;.
\end{align*}
\end{proposition}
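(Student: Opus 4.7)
\medskip

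The plan is to reduce Proposition \ref{prop:hypercube} to the one-dimensional case via the tensorisation result and then to verify an explicit inequality on the two-point space. Since $K_{p,q,n}$ is the product chain with all weights $\alpha_i=1/n$ and all factors equal to $K_{p,q}$, Theorem \ref{thm:tensorisation} gives
\begin{equation*}
\Ric(K_{p,q,n})~\geq~\min_i \alpha_i \Ric(K_{p,q})~=~\frac{1}{n}\Ric(K_{p,q})\ .
\end{equation*}
Hence it suffices to establish the pointwise bound
\begin{equation*}
\Ric(K_{p,q})~\geq~\kappa\qquad\text{with}\qquad \kappa := \frac{p+q}{2}+\sqrt{pq}
\end{equation*}
for the two-point chain. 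By Theorem \ref{thm:Ric-equiv}, this is equivalent to showing $\cB(\rho,\psi)\geq \kappa\,\cA(\rho,\psi)$ for all $\rho\in\PXs$ and all $\psi\in\R^{\cX}$.

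I would now set $\cX=\{0,1\}$, write $a=\rho(0)$, $b=\rho(1)$, $s=\psi(1)-\psi(0)$, and use the mapping representation with a single generator $\delta$, $c(0,\delta)=p$, $c(1,\delta)=q$. Using the formulas \eqref{eq:A-mappingform}, \eqref{eq:T1-mappingform}, \eqref{eq:T2-mappingform}, a direct calculation (the only nontrivial ingredient being detailed balance $p\pi(0)=q\pi(1)$) gives
\begin{equation*}
\cA(\rho,\psi)~=~s^2\,p\pi(0)\,\theta(a,b)\ ,
\end{equation*}
and, after collecting the two contributions to $\cB$,
\begin{equation*}
\cB(\rho,\psi)-\kappa\cA(\rho,\psi)~=~\frac{s^2 p\pi(0)}{2}\,\Big[(\sqrt p-\sqrt q)^2\,\theta(a,b)+(b-a)\big(p\,\partial_1\theta(a,b)-q\,\partial_2\theta(a,b)\big)\Big]\ ,
\end{equation*}
where I have used $2(p+q-\kappa)=(\sqrt p-\sqrt q)^2$. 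The task is to show that the bracketed expression is nonnegative.

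Using homogeneity, set $t=b/a$ and write $f(t)=(t-1)/\log t$ so that $\theta(a,b)=af(t)$, $\partial_1\theta(a,b)=f(t)-tf'(t)$, $\partial_2\theta(a,b)=f'(t)$. The bracket becomes $a\cdot F(p,q,t)$, where
\begin{equation*}
F(p,q,t)~=~(\sqrt p-\sqrt q)^2 f(t)+(t-1)\big[pf(t)-(pt+q)f'(t)\big]\ .
\end{equation*}
A short algebraic simplification gives
\begin{equation*}
F(p,q,t)~=~(pt+q)\,g(t)-2\sqrt{pq}\,f(t)\ ,\qquad g(t):=f(t)-(t-1)f'(t)\ ,
\end{equation*}
and a straightforward computation using the explicit form of $f$ yields the identity $g(t)=f(t)^2/t$. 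Substituting and factoring out $f(t)$ leaves
\begin{equation*}
F(p,q,t)~=~f(t)\left[pf(t)+\frac{qf(t)}{t}-2\sqrt{pq}\right]\ .
\end{equation*}
Now AM-GM gives $pf(t)+qf(t)/t\geq 2\sqrt{pq}\,f(t)/\sqrt{t}$, so it remains only to verify the classical inequality $f(t)\geq\sqrt t$ for all $t>0$, i.e., $t-1\geq\sqrt t\,\log t$ (for $t>1$), which, with $x=\sqrt t$, reduces to $h(x):=x-1/x-2\log x\geq 0$ for $x\geq 1$; this follows from $h(1)=0$ and $h'(x)=(x-1)^2/x^2\geq 0$, with the case $t<1$ being symmetric under $t\mapsto 1/t$.

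The conceptually cleanest step is the identity $g=f^2/t$, which turns the problem into a single scalar inequality on the logarithmic mean; once that is in place, the rest is AM-GM and a one-line calculus check. I expect the only real obstacle to be the bookkeeping in step three — writing $\cB-\kappa\cA$ as a quadratic form in $(\sqrt p,\sqrt q)$ with coefficients depending only on $t$ — but this is an elementary if slightly tedious manipulation once the logarithmic mean expressions are in hand.
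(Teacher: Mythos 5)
Your proposal is correct, and I verified the computation: with $a=\rho(0)$, $b=\rho(1)$, $s=\psi(1)-\psi(0)$ one indeed gets $\cA(\rho,\psi)=s^2p\pi(0)\theta(a,b)$, the term $-\ip{\hrho\nabla\psi,\nabla\Delta\psi}_\pi$ equals $(p+q)\cA(\rho,\psi)$, the $\hDelta\rho$-term gives exactly your bracket, the identity $f(t)-(t-1)f'(t)=f(t)^2/t$ for $f(t)=(t-1)/\log t$ is correct, and the final step is AM--GM together with the logarithmic--geometric mean inequality $f(t)\geq\sqrt{t}$; via Theorem \ref{thm:Ric-equiv}(4) this yields $\Ric(K_{p,q})\geq\frac{p+q}{2}+\sqrt{pq}$, and tensorisation (Theorem \ref{thm:tensorisation}) finishes the proof, just as in the paper. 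The route differs from the paper's in the two-point step: the paper does not recompute $\cB-\kappa\cA$ but quotes \cite[Proposition 2.12]{Ma11}, which states $\Ric(K_{p,q,1})\geq \frac{p+q}{2}+\inf_{-1<\beta<1}\frac{1}{1-\beta^2}\,\frac{q(1+\beta)-p(1-\beta)}{\log q(1+\beta)-\log p(1-\beta)}$, and then bounds the infimand below by $\sqrt{pq/(1-\beta^2)}\geq\sqrt{pq}$ using the same logarithmic--geometric mean inequality. So both arguments ultimately hinge on $\theta(s,t)\geq\sqrt{st}$; what your version buys is self-containedness — you rederive the two-point bound directly from the criterion $\cB\geq\kappa\cA$ in the mapping representation, with the reduction by homogeneity to a scalar inequality in $t=b/a$ and the clean identity $g=f^2/t$ as the organizing step — at the cost of the bookkeeping that the paper outsources to \cite{Ma11}. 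Your explicit form $F(p,q,t)=f(t)\bigl[pf(t)+qf(t)/t-2\sqrt{pq}\bigr]$ also makes transparent where the estimate is lossy for $p\neq q$, consistent with the remark following the proposition that the constant should be improvable in the asymmetric case.
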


\begin{proof}
  The two-point space $\cQ^1 = \{0,1\}$ has been analysed in detail in
  \cite{Ma11}.  In particular, \cite[Proposition 2.12]{Ma11} asserts
  that $\Ric (K_{p,q,1}) \geq \kappa_{p,q,n}$, where
\begin{align*}
 \kappa_{p,q,n}~=~ 
     \frac{p+q}{2} +  \inf_{-1 < \beta < 1} \bigg\{\frac{1}{1 - \beta^2}
      \frac{ q(1+\beta) - p(1-\beta) }
    					{\log q(1+\beta) - \log p(1-\beta)}   \bigg\}\;.
\end{align*}
In order to estimate the right-hand side, we use the
logarithmic-geometric mean inequality to obtain for $\beta \in
(-1,1)$,
\begin{align*}
      \frac{1}{1 - \beta^2}
      \frac{ q(1+\beta) - p(1-\beta) }
{\log q(1+\beta) - \log p(1-\beta)} 
~\geq~  \sqrt{\frac{pq}{1 - \beta^2}}
~\geq~ \sqrt{pq}
\end{align*}
We thus infer that $\Ric (K_{p,q,1}) \geq \frac{p+q}{2} + \sqrt{pq}$.
The general bound then follows immediately from Theorem
\ref{thm:tensorisation}.
\end{proof}

We shall see in Section \ref{sec:inequalities} that this bound is
sharp if $p = q$. If $p \neq q$, it should be possible to improve this
bound by obtaining a sharper bound in the minimisation problem in the
proof above.

\medskip As another application of the tensorisation result, we prove
nonnegativity of the \nl Ricci curvature for the simple random walk on a
discrete torus of arbitrary size in any dimension $d \geq 1$.

Let $\cc := \{ c_n \}_{n=1}^d$ be a sequence of natural numbers and consider the discrete torus
\begin{align*}
  T_\cc := C_{c_1} \times \ldots \times C_{c_d}\;.
\end{align*}
The simple random walk $K_\cc$ on $T_\cc$ is the $d$-fold product of simple random walks on the circles of length $c_1, \ldots, c_d$. 

\begin{proposition}[$d$-dimensional torus]\label{prop:torus}
For any $d \geq 1$ and $\cc := \{ c_n \}_{n=1}^d \in \N^d$ we have 
\begin{align*}
\Ric(K_\cc) \geq 0\;.
\end{align*}
\end{proposition}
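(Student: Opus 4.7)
The plan is to combine two results already established in the paper: the non-negativity of Ricci curvature for the simple random walk on a single discrete circle (Example \ref{ex:circle}), and the tensorisation theorem (Theorem \ref{thm:tensorisation}).

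First I would observe that, by the definition of the product kernel $K_\cc$ as the $d$-fold product of simple random walks on circles $C_{c_1}, \ldots, C_{c_d}$, we are exactly in the setting of Theorem \ref{thm:tensorisation} with $\cX_i = C_{c_i}$, $K_i$ the simple random walk kernel on $C_{c_i}$, and uniform weights $\alpha_i = 1/d$. Next, Example \ref{ex:circle} already establishes (via the criterion in Proposition \ref{prop:simplecriterion}, using the mapping representation with $G = \{+,-\}$ and homogeneous rates $c(m,\pm) = 1/2$) that
\begin{align*}
 \Ric(K_i) \geq 0 \qquad \text{for each } i = 1, \ldots, d.
\end{align*}

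Applying Theorem \ref{thm:tensorisation} with $\kappa_i = 0$ for every $i$ then yields
\begin{align*}
 \Ric(K_\cc) \geq \min_i \alpha_i \kappa_i = 0,
\end{align*}
which is the desired conclusion. There is no real obstacle here, as both ingredients are already in place; the entire content of the proposition is packaged in the observation that the $d$-dimensional discrete torus is a tensor product of discrete circles, so that non-negative curvature is inherited componentwise from the one-dimensional case.
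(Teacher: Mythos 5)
Your proposal is correct and is exactly the paper's argument: the paper's proof of this proposition simply cites Example \ref{ex:circle} and Theorem \ref{thm:tensorisation}, which is what you have spelled out (with the weights $\alpha_i = 1/d$ and $\kappa_i = 0$).
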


\begin{proof}
This follows from Example \ref{ex:circle} and Theorem \ref{thm:tensorisation}.
\end{proof}

\section{Functional inequalities}
\label{sec:inequalities}

The aim of this section is to prove discrete counterparts to the
celebrated theorems by Bakry-\'Emery and Otto-Villani. Along the way
we prove a discrete version of the HWI-inequality, which relates the
$L^2$-Wasserstein distance to the entropy and the Fisher information.
As announced in the introduction, we shall follow the approach from
Otto-Villani, which relies on the fact that the heat flow is the
gradient flow of the entropy. Therefore, the role of the
$L^2$-Wasserstein distance will be taken over by the distance $\cW$.

We fix a finite set $\cX$ and an irreducible and reversible Markov
kernel $K$ with steady state $\pi$. Recall that the relative entropy
of a density $\rho\in\cP(\cX)$ is defined by
\begin{align*}
  \cH(\rho)~=~\sum\limits_{x\in\cX}\rho(x)\log\rho(x)\pi(x)\ .
\end{align*}
As before, we consider a discrete analogue of the Fisher information, given for $\rho\in\PXs$ by
\begin{align*}
  \cI(\rho)~=~\frac{1}{2}\sum\limits_{x,y\in\cX} \big(\rho(x)-\rho(y)\big) \big(\log\rho(x)-\log\rho(y)\big) K(x,y)\pi(x)\ .
\end{align*}
If $\rho(x)=0$ for some $x\in \cX$, we set $\cI(\rho)=+\infty$. Note that
this quantity can be rewritten in the form
$\cI(\rho)=\|\nabla\log\rho\|^2_\rho$ using the definition of the
logarithmic mean. The relevance of $\cI$ in this setting is due to the fact that it describes the entropy dissipation along the heat flow:
\begin{align}\label{eq:H-derivative}
\ddt\cH(P_t\rho)~&=~-\cI(P_t\rho)\;.
\end{align}

The following proposition gives an upper bound for the speed of the heat flow measured in the metric $\cW$.

\begin{proposition}\label{prop:HW-derivative}
  Let $\rho,\sigma\in\PX$. For all $t > 0$ we have
\begin{align}
  \label{eq:W-derivative}
  \ddtr\cW(P_t\rho,\sigma)~&\leq~\sqrt{\cI(P_t\rho)}\;.
\end{align}
In particular,  the metric derivative of the heat flow with respect to $\cW$ satisfies $\abs{(P_t\rho)'} \leq\sqrt{\cI(P_t\rho)}$.
If $\rho$ belongs to $\cP_*(\cX)$, then \eqref{eq:W-derivative} holds at $t=0$ as well.
\end{proposition}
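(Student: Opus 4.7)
The approach is to exploit the fact that the heat flow $r\mapsto P_r\rho$ satisfies the continuity equation $(v')$ with a velocity whose squared $\cA'$-norm equals the Fisher information $\cI(P_r\rho)$, and to bound $\cW(P_{t+h}\rho,\sigma)$ by concatenating a short reversed heat flow segment with a near-optimal curve from $P_t\rho$ to $\sigma$. Since $K$ is irreducible, $P_r\rho\in\cP_*(\cX)$ for every $r>0$, hence $\cI(P_r\rho)$ is finite and continuous in $r$ on $(0,\infty)$. Moreover, by Remark \ref{rem:log-mean} the identity $\nabla P_r\rho=\widehat{P_r\rho}\,\nabla\log P_r\rho$ yields that
\begin{equation*}
V_r:=\widehat{P_r\rho}\,\nabla\log P_r\rho
\end{equation*}
satisfies $\nabla\cdot V_r=\Delta P_r\rho=\partial_r P_r\rho$ and $\cA'(P_r\rho,V_r)=\|\nabla\log P_r\rho\|_{P_r\rho}^{2}=\cI(P_r\rho)$.

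Fix $t>0$ (or $t=0$ when $\rho\in\cP_*(\cX)$), $h>0$, and $\eps>0$. Using Lemma \ref{lem:smooth-reformulation}, pick $(\tilde\rho,\tilde V)\in\CE'_1(P_t\rho,\sigma)$ with $\int_0^1\cA'(\tilde\rho_s,\tilde V_s)\,ds\leq\cW(P_t\rho,\sigma)^2+\eps$, and define the concatenation $(\rho^*,V^*)$ on $[0,1+h]$ by $(\rho^*_s,V^*_s):=(P_{t+h-s}\rho,\,\widehat{P_{t+h-s}\rho}\,\nabla\log P_{t+h-s}\rho)$ on $[0,h]$ and $(\rho^*_s,V^*_s):=(\tilde\rho_{s-h},\tilde V_{s-h})$ on $[h,1+h]$. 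The matching $\tilde\rho_0=P_t\rho$ yields continuity at the join; on the first piece $\partial_s\rho^*_s+\nabla\cdot V^*_s=-\Delta P_{t+h-s}\rho+\Delta P_{t+h-s}\rho=0$, while on the second piece $(v')$ holds by construction. Hence $(\rho^*,V^*)\in\CE'_{1+h}(P_{t+h}\rho,\sigma)$.

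Lemma \ref{lem:reparametrization} combined with the Cauchy--Schwarz inequality applied on the second piece now gives
\begin{equation*}
\cW(P_{t+h}\rho,\sigma)\ \leq\ \int_0^{1+h}\!\!\sqrt{\cA'(\rho^*_s,V^*_s)}\,ds\ \leq\ \int_t^{t+h}\!\!\sqrt{\cI(P_r\rho)}\,dr\ +\ \sqrt{\cW(P_t\rho,\sigma)^2+\eps}.
\end{equation*}
Letting $\eps\downarrow 0$ and subtracting $\cW(P_t\rho,\sigma)$ yields $\cW(P_{t+h}\rho,\sigma)-\cW(P_t\rho,\sigma)\leq\int_t^{t+h}\sqrt{\cI(P_r\rho)}\,dr$; dividing by $h$ and invoking right-continuity of $r\mapsto\sqrt{\cI(P_r\rho)}$ at $r=t$ (valid for $t>0$ always, and at $t=0$ when $\rho\in\cP_*(\cX)$) delivers \eqref{eq:W-derivative}. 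The metric velocity bound follows by specialising $\sigma=P_t\rho$, so that $\cW(P_t\rho,\sigma)=0$ and $\cW(P_{t+h}\rho,P_t\rho)/h\leq h^{-1}\int_t^{t+h}\sqrt{\cI(P_r\rho)}\,dr\to\sqrt{\cI(P_t\rho)}$.

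The main technical point to verify is the admissibility of $(\rho^*,V^*)$ in $\CE'_{1+h}$: continuity of $\rho^*$ across $s=h$, validity of the continuity equation in the sense of distributions on the whole interval (not merely piecewise), and local integrability of $V^*$. All of these are routine since the heat flow piece is smooth on $[0,h]$ — because $P_r\rho\in\cP_*(\cX)$ throughout — and $\tilde V$ is locally integrable by definition of $\CE'_1$, but they must be spelled out before invoking Lemma \ref{lem:reparametrization}.
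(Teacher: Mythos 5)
Your proof is correct and follows essentially the same route as the paper: both rest on the observation that the heat flow is an admissible curve for the continuity equation whose action density is exactly $\cI(P_r\rho)$, so that $\cW(P_t\rho,P_{t+h}\rho)\leq\int_t^{t+h}\sqrt{\cI(P_r\rho)}\dd r$, combined with the (right-)continuity of $r\mapsto\cI(P_r\rho)$. The only difference is cosmetic: the paper invokes the triangle inequality for $\cW$ and bounds the heat-flow segment directly via the pair $(\rho_r,-\log\rho_r)$ in the continuity equation, whereas you re-derive the same estimate by concatenating the reversed heat-flow piece with a near-optimal curve and applying Lemma \ref{lem:reparametrization}.
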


\begin{proof}
  Let us set $\rho_t:=P_t\rho$. Elementary Markov chain theory
  guarantees that $\rho_t\in\cP_*(\cX)$ for all $t>0$ and that the map
  $t\mapsto\rho_t$ is smooth. To prove \eqref{eq:W-derivative} we use the
  triangle inequality and obtain
  \begin{align*}
    \ddtr\cW(\rho_t,\sigma)~&=~\limsup\limits_{s\searrow 0}\frac{1}{s}\left(\cW(\rho_{t+s},\sigma)-\cW(\rho_t,\sigma)\right)\\
    &\leq~\limsup\limits_{s\searrow
      0}\frac{1}{s}\cW(\rho_t,\rho_{t+s})\ .
  \end{align*}
  Note that the couple $(\rho_r,-\log\rho_r)_{r\in[0,1]}$ solves the
  continuity equation \eqref{eq:cont}. From the definition of $\cW$ we
  thus obtain the estimate
  \begin{align*}
    \limsup\limits_{s\searrow 0}\frac{1}{s}\cW(\rho_t,\rho_{t+s})~&\leq~\limsup\limits_{s\searrow 0}\frac{1}{s}\int\limits_t^{t+s}\parallel\nabla\log\rho_r\parallel_{\rho_r}\dd r \nonumber\\
    &=~\limsup\limits_{s\searrow 0}\frac{1}{s}\int\limits_t^{t+s}\sqrt{\cI(\rho_r)}\dd r \nonumber\\
    &=~\sqrt{\cI(\rho_t)}\ .
  \end{align*}
  The last equality holds since $r\mapsto \sqrt{\cI(\rho_r)}$ is a
  continuous function.
\end{proof}

Let us now recall from  Section \ref{sec:intro} the functional inequalities that will be
studied.
Recall that $\one \in \PX$ denotes the density of the stationary
distribution, which is everywhere equal to $1$.

\begin{definition}\label{def:MLSI}
  The Markov kernel $K$ satisfies 
\begin{enumerate}
\item   a \emph{modified logarithmic
    Sobolev} inequality with constant $\lambda>0$ if for all $\rho\in\PX$
  \begin{align}
  \tag{MLSI($\lambda$)}
  \label{eq:MLSI}
    \cH(\rho)~\leq~\frac{1}{2\lambda}\cI(\rho)\ .
  \end{align}

\item an \emph{H$\cW$I} inequality with
  constant $\kappa\in\R$ if for all
  $\rho\in\PX$
  \begin{align}\label{eq:HWI}
  \tag{{H$\cW$I}($\kappa$)}
    \cH(\rho)~\leq~\cW(\rho,\one)\sqrt{\cI(\rho)}-\frac{\kappa}{2}\cW(\rho,\one)^2\ .
  \end{align}

\item a \emph{modified Talagrand}
  inequality with constant $\lambda > 0$ if for all $\rho\in\PX$
  \begin{align}\label{eq:T}
  \tag{{T}$_{\cW}$($\lambda$)}
    \cW(\rho,\one)~\leq~\sqrt{\frac{2}{\lambda}\cH(\rho)}\ .
  \end{align}

\item a \emph{Poincar\'e} inequality with constant $\lambda > 0$ 
  if for all $\phi\in\R^{X}$ with $\sum_x \phi(x)\pi(x)=0$
  \begin{align}\label{eq:P}
  \tag{{P}($\lambda$)}
    \|{\phi}\|^2_\pi~\leq~\frac{1}{\lambda}\|{\nabla \phi}\|^2_\pi\ .
  \end{align}
\end{enumerate}  
\end{definition}

The following result is a discrete analogue of a result by Otto and
Villani \cite{OV00}.

\begin{theorem}\label{thm:Ric2HWI}
  Assume that $\Ric(K)\geq\kappa$ for some $\kappa\in\R$. Then
  $K$ satisfies  $\HcWI(\kappa)$.
\end{theorem}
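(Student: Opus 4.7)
The plan is to adapt the classical Otto--Villani argument to the discrete setting, using the constant-speed geodesic from $\rho$ to $\one$ as the central object. We may assume $\rho \in \PXs$, since otherwise $\cI(\rho) = +\infty$ by convention and the inequality is trivial. By Theorem \ref{thm:existenceminimizers}, pick a constant speed geodesic $(\rho_t, V_t) \in \CE'_1(\rho, \one)$ with $\cA'(\rho_t, V_t) = \cW(\rho, \one)^2$ for a.e.\ $t \in [0,1]$. Because $t \mapsto \rho_t$ is continuous in the Euclidean topology (by compatibility, Theorem \ref{thm:ma11-main}(1)) and $\PXs$ is open, there exists $\delta > 0$ such that $\rho_t \in \PXs$ for every $t \in [0, \delta]$.

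The hypothesis $\Ric(K) \geq \kappa$ combined with $\cH(\one) = 0$ gives, after dividing the defining $\kappa$-convexity inequality by $t \in (0,1]$,
\begin{equation*}
\frac{\cH(\rho_t) - \cH(\rho)}{t} \;\leq\; -\cH(\rho) - \frac{\kappa}{2}(1-t)\cW(\rho, \one)^2.
\end{equation*}
For a matching lower bound I would differentiate $\cH(\rho_t)$ using the continuity equation $\dot\rho_t + \nabla \cdot V_t = 0$ and the integration by parts formula: since $\ip{1,\nabla\cdot V_t}_\pi = 0$ by reversibility, one obtains for a.e.\ $t \in [0,\delta]$
\begin{equation*}
\frac{d}{dt}\cH(\rho_t) \;=\; \ip{1 + \log \rho_t, \dot\rho_t}_\pi \;=\; \ip{\nabla \log \rho_t, V_t}_\pi.
\end{equation*}
Cauchy--Schwarz, in the form
\begin{equation*}
\bigl|\ip{\nabla \log \rho_t, V_t}_\pi\bigr|^2 \;\leq\; \|\nabla\log\rho_t\|_{\rho_t}^2 \cdot \cA'(\rho_t, V_t),
\end{equation*}
together with the identity $\|\nabla\log\rho\|_\rho^2 = \cI(\rho)$ (which follows from the characterising property of the logarithmic mean) and $\cA'(\rho_t, V_t) = \cW(\rho, \one)^2$ a.e., produces
\begin{equation*}
\frac{d}{dt}\cH(\rho_t) \;\geq\; -\sqrt{\cI(\rho_t)}\,\cW(\rho, \one)
\qquad \text{for a.e. } t\in[0,\delta].
\end{equation*}

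Integrating from $0$ to $t \in (0,\delta]$, dividing by $t$, and taking $t \to 0^+$ --- using that $\cI$ is continuous on $\PXs$ so that $\frac{1}{t}\int_0^t \sqrt{\cI(\rho_s)}\, ds \to \sqrt{\cI(\rho)}$ --- I obtain
\begin{equation*}
-\sqrt{\cI(\rho)}\cdot\cW(\rho,\one) \;\leq\; \liminf_{t \to 0^+}\frac{\cH(\rho_t) - \cH(\rho)}{t} \;\leq\; -\cH(\rho) - \frac{\kappa}{2}\cW(\rho,\one)^2,
\end{equation*}
which is exactly H$\cW$I$(\kappa)$ after rearrangement. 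The only real technical point is justifying the chain-rule computation of $\frac{d}{dt}\cH(\rho_t)$ from the weak formulation of the continuity equation encoded in $\CE'_1$. This is unproblematic on $[0,\delta]$: since $\rho_t$ stays bounded away from $0$ there, $\log\rho_t$ is uniformly bounded and smooth in $\rho_t$, while $\rho_t$ is absolutely continuous in $t$ with distributional derivative $-\nabla \cdot V_t \in L^1_{\mathrm{loc}}$, so the fundamental theorem of calculus applied to $t \mapsto \cH(\rho_t)$ yields the integral representation used above.
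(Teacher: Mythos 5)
Your proposal is correct, but it takes a genuinely different route from the paper. You argue directly from the definition of $\Ric(K)\geq\kappa$: take the minimizing geodesic $(\rho_t,V_t)\in\CE_1'(\rho,\one)$ furnished by Theorem \ref{thm:existenceminimizers}, use $\kappa$-convexity of $\cH$ along it as an upper bound on the difference quotient at $t=0$, and match it with a lower bound obtained by differentiating $\cH(\rho_t)$ near the endpoint via the continuity equation, integration by parts, and Cauchy--Schwarz in $\cG_{\rho_t}$ (using $\|\nabla\log\rho_t\|^2_{\rho_t}=\cI(\rho_t)$ and $\cA'(\rho_t,V_t)=\cW(\rho,\one)^2$ a.e.). The technical points you flag are indeed the right ones and are unproblematic: $\rho_t$ is absolutely continuous with $\dot\rho_t=-\nabla\cdot V_t$ a.e.\ (as in the derivation of \eqref{eq:ce-boundary}), it stays uniformly bounded away from zero on a small interval $[0,\delta]$ by continuity and openness of $\PXs$, and continuity of $\cI$ there gives the averaged limit. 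The paper instead deduces \eqref{eq:HWI} from the evolution variational inequality \eqref{eq:EVI} applied to the heat flow with $\nu=\one$ at $t=0$, bounding $-\tfrac12\ddtrz\cW^2(P_t\rho,\one)$ by $\cW(\rho,\one)\sqrt{\cI(\rho)}$ via the triangle inequality and Proposition \ref{prop:HW-derivative}; this leans on the full machinery of Theorem \ref{thm:Ric-equiv} (in particular the Daneri--Savar\'e argument behind ``$(1)\Rightarrow(2)$''), which the paper needs anyway, and it never touches geodesics directly --- an advantage in settings where geodesic regularity is delicate. Your argument is more elementary and self-contained for this particular statement, requiring only Definition \ref{def:Ricci}, the existence of constant speed geodesics, and endpoint differentiation, and it is the rigorous discrete incarnation of the classical displacement-convexity proof of the HWI inequality.
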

\begin{proof}
  Fix $\rho\in\PX$. Without restriction we can assume that $\rho>0$
  since otherwise $\cI(\rho)=+\infty$ and there is nothing to
  prove. Let $\rho_t=P_t\rho$ where $P_t=e^{t(K-I)}$ is the heat
  semigroup. From Theorem \ref{thm:Ric-equiv} and the lower bound on
  the Ricci curvature we know that the curve $(\rho_t)$ satisfies
  EVI($\kappa$), i.e., equation \eqref{eq:EVI}. Choosing in particular
  $\nu=\one$ and $t=0$ in the EVI we obtain the inequality
  \begin{equation*}
    \cH(\rho)~\leq~-\frac12 \ddtrz \cW(\rho_t,\one)^2 -\frac{\kappa}{2}\cW(\rho,\one)^2\;.
  \end{equation*}
  To finish the proof we show that
  \begin{align*}
    -\frac12 \ddtrz\cW(\rho_t,\one)^2~\leq~\cW(\rho,\one) \sqrt{\cI(\rho)}\;.
  \end{align*}
  Indeed, using the triangle inequality
  we estimate
  \begin{align*}
    -\frac12 \ddtrz \cW(\rho_t,\one)^2~&=~\liminf\limits_{s\searrow 0}\frac{1}{2s}\left(\cW(\rho,\one)^2-\cW(\rho_{s},\one)^2\right)\\
                                           &\leq~~\limsup\limits_{s\searrow 0}\frac{1}{2s}\left(\cW(\rho,\rho_{s})^2 +2\cW(\rho,\rho_{s})\cdot
                                           \cW(\rho,\one)\right)\ ,
  \end{align*}
  Using the estimate \eqref{eq:W-derivative} from Proposition
  \ref{prop:HW-derivative} with $\sigma=\rho$ and $t=0$ we see that
  the second term on the right hand side is bounded by $\cW(\rho,\one)
  \sqrt{\cI(\rho)}$ while the first term vanishes.
\end{proof}

The following result is now a simple consequence.

\begin{theorem}[Discrete Bakry-\'{E}mery Theorem]\label{thm:Ric2LSI}
  Assume that $\Ric(K)\geq\lambda$ for some $\lambda>0$. Then $K$
  satisfies $\mLSI(\lambda)$.
\end{theorem}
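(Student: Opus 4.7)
The plan is to deduce $\mLSI(\lambda)$ directly from the $\HcWI(\lambda)$ inequality established in Theorem \ref{thm:Ric2LSI} (immediately above the statement) by a simple quadratic optimisation. This is exactly the discrete analogue of Otto--Villani's derivation of the classical logarithmic Sobolev inequality from HWI in the Riemannian setting.

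Fix $\rho \in \PX$. If $\rho \notin \PXs$ then $\cI(\rho) = +\infty$ and the inequality is trivial, so we may assume $\rho \in \PXs$. Since $\Ric(K) \geq \lambda$, Theorem \ref{thm:Ric2HWI} yields
\begin{align*}
\cH(\rho) \leq \cW(\rho,\one)\sqrt{\cI(\rho)} - \frac{\lambda}{2}\cW(\rho,\one)^2.
\end{align*}
Now apply the elementary Young-type inequality $ab \leq \frac{\lambda}{2} a^2 + \frac{1}{2\lambda} b^2$ with $a = \cW(\rho,\one)$ and $b = \sqrt{\cI(\rho)}$, to obtain
\begin{align*}
\cW(\rho,\one)\sqrt{\cI(\rho)} - \frac{\lambda}{2}\cW(\rho,\one)^2 \leq \frac{1}{2\lambda}\cI(\rho).
\end{align*}
Combining the two displayed inequalities gives $\cH(\rho) \leq \frac{1}{2\lambda}\cI(\rho)$, which is precisely $\mLSI(\lambda)$.

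There is essentially no obstacle here: the entire content of the theorem is packaged into $\HcWI(\lambda)$, and once that is available the passage to $\mLSI(\lambda)$ is a one-line optimisation over the parameter $\cW(\rho,\one)$ (the quadratic $x \mapsto x\sqrt{\cI(\rho)} - \frac{\lambda}{2}x^2$ attains its maximum $\frac{\cI(\rho)}{2\lambda}$ at $x = \sqrt{\cI(\rho)}/\lambda$, which corresponds to the Young inequality above). The only very mild point to mention is that the inequality trivially holds when $\rho$ vanishes somewhere, since then the right-hand side is $+\infty$.
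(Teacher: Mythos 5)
Your proposal is correct and is essentially identical to the paper's own proof: both deduce $\mLSI(\lambda)$ from $\HcWI(\lambda)$ (Theorem \ref{thm:Ric2HWI}) via the Young inequality $xy \leq \frac{\lambda}{2}x^2 + \frac{1}{2\lambda}y^2$ applied with $x=\cW(\rho,\one)$, $y=\sqrt{\cI(\rho)}$. The only nitpick is the stray reference in your first sentence to Theorem \ref{thm:Ric2LSI} where you mean Theorem \ref{thm:Ric2HWI}.
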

\begin{proof}
  By Theorem \ref{thm:Ric2HWI} $K$ satisfies
  $\HcWI(\lambda)$. From this we derive $\mLSI(\lambda)$ by an
  application of Young's inequality :
  $$x y~\leq~c x^2 +\frac{1}{4c}y^2\qquad \forall x,y\in\R\;, \ c>0\ ,$$
  in which we set $x=\cW(\rho,1),\ y=\sqrt{\cI(\rho)}$ and
  $c=\frac{\lambda}{2}$.
\end{proof}

\begin{theorem}[Discrete Otto-Villani Theorem]\label{thm:LSI2T}
  Assume that $K$ satisfies\\ 
  $\mLSI(\lambda)$ 
  for some
  $\lambda>0$. Then $K$ also satisfies
  $\mTal(\lambda)$.
\end{theorem}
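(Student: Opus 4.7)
The plan is to follow the Otto--Villani strategy of running the heat flow from $\rho$ towards the stationary density $\one$ and computing the length of this path in $\cW$. Let $\rho_t := P_t \rho$, which lies in $\PXs$ for every $t > 0$. On one hand, elementary Markov chain theory gives $\rho_t \to \one$ (uniformly, hence in $\cW$ by Theorem~\ref{thm:ma11-main}(1)); on the other hand, the heat flow dissipates entropy at rate $\cI$, namely $\frac{d}{dt}\cH(\rho_t) = -\cI(\rho_t)$. I will combine these with the speed bound from Proposition~\ref{prop:HW-derivative} and $\mLSI(\lambda)$ to turn everything into an integrable ODE in $t$.

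First I would treat a density $\rho \in \PXs$. Since $s \mapsto \rho_s$ is a smooth curve connecting $\rho$ to $\rho_t$, the metric derivative bound $|(\rho_s)'| \leq \sqrt{\cI(\rho_s)}$ from Proposition~\ref{prop:HW-derivative} yields, by the length-minimising definition of $\cW$,
\begin{equation*}
\cW(\rho, \rho_t) \leq \int_0^t \sqrt{\cI(\rho_s)} \dd s.
\end{equation*}
Letting $t \to \infty$ and using $\cW(\rho_t, \one) \to 0$ together with the triangle inequality gives $\cW(\rho, \one) \leq \int_0^\infty \sqrt{\cI(\rho_s)} \dd s$. Now set $H(s) := \cH(\rho_s)$, so $-H'(s) = \cI(\rho_s)$ and by $\mLSI(\lambda)$ we have $\cI(\rho_s) \geq 2\lambda H(s)$. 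Therefore
\begin{equation*}
\sqrt{\cI(\rho_s)} = \frac{\cI(\rho_s)}{\sqrt{\cI(\rho_s)}} \leq \frac{-H'(s)}{\sqrt{2\lambda H(s)}},
\end{equation*}
and the substitution $u = H(s)$ gives
\begin{equation*}
\int_0^\infty \sqrt{\cI(\rho_s)} \dd s \leq \int_0^\infty \frac{-H'(s)}{\sqrt{2\lambda H(s)}} \dd s = \int_0^{\cH(\rho)} \frac{du}{\sqrt{2\lambda u}} = \sqrt{\frac{2\cH(\rho)}{\lambda}},
\end{equation*}
which is exactly $\mTal(\lambda)$.

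To extend the estimate to arbitrary $\rho \in \PX$ (possibly with zeros), I would approximate by $\rho^\eps := (1-\eps)\rho + \eps\one \in \PXs$. Both $\cH(\rho^\eps) \to \cH(\rho)$ by continuity of $u \log u$ on $[0,\infty)$ and $\cW(\rho^\eps, \one) \to \cW(\rho, \one)$ by Theorem~\ref{thm:ma11-main}(1), so the inequality passes to the limit.

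The main technical point is justifying $\cW(\rho, \rho_t) \leq \int_0^t \sqrt{\cI(\rho_s)} \dd s$. The cleanest route is to observe that $s \mapsto \rho_s$ is a smooth curve in $\PXs$ whose tangent is $\dot \rho_s = \Delta \rho_s$, which via the identity $\nabla \rho / \hat\rho = \nabla \log \rho$ (Remark~\ref{rem:log-mean}) corresponds to the velocity potential $\psi_s = -\log \rho_s$ in the continuity equation~\eqref{eq:cont}; then the action along this curve is exactly $\int_0^t \cA(\rho_s, -\log\rho_s)\dd s = \int_0^t \cI(\rho_s)\dd s$, and the reparametrisation formula of Lemma~\ref{lem:reparametrization} converts the $L^2$-action into the $L^1$-length bound. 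Alternatively, one can integrate the $\ddtr$-estimate from Proposition~\ref{prop:HW-derivative} directly, as is done in the classical Otto--Villani argument; either route sidesteps the only real subtlety of the proof.
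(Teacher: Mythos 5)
Your argument is correct, and it is in essence the same Otto--Villani strategy the paper uses: run the heat flow, control the $\cW$-speed by $\sqrt{\cI}$ via Proposition~\ref{prop:HW-derivative} (whose proof is exactly your ``cleanest route'': the pair $(\rho_s,-\log\rho_s)$ solves the continuity equation and has action $\cI(\rho_s)$), invoke $\mLSI(\lambda)$, use $\cH(\rho_t)\to 0$, $\cW(\rho_t,\one)\to 0$, and finish by approximation for densities with zeros. The only difference is bookkeeping: you integrate the length bound $\cW(\rho,\one)\leq\int_0^\infty\sqrt{\cI(\rho_s)}\,\mathrm{d}s$ and then substitute $u=\cH(\rho_s)$, whereas the paper packages the same estimate into the auxiliary function $F(t)=\cW(\rho,\rho_t)+\sqrt{\tfrac{2}{\lambda}\cH(\rho_t)}$ and shows $\tfrac{\mathrm{d}^+}{\mathrm{d}t}F\leq 0$, comparing $F(0)$ with $F(\infty)$. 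The monotone-functional form avoids ever dividing by $\sqrt{\cI(\rho_s)}$ or $\sqrt{\cH(\rho_s)}$; in your version you should note (as the paper does for the case $\rho_t=\one$) that these quantities vanish only if $\rho_s=\one$, which for $\rho\in\PXs$ forces $\rho=\one$ (by injectivity of $P_s$ and irreducibility), a trivial case --- with that one-line remark your substitution is fully justified and the two proofs are interchangeable.
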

\begin{proof}
  It is sufficient to prove that  $\mTal(\lambda)$ holds for any
  $\rho\in\cP_*(\cX)$. The inequality for general $\rho$ can then be
  obtained by an easy approximation argument taking into account the
  continuity of $\cW$ with respect to the Euclidean metric.
  
  So fix $\rho\in\cP_*(\cX)$ and set $\rho_t=P_t\rho$. First note that as
  $t\to\infty$, we have
  \begin{align}\label{eq:asymptotic}
    \cH(\rho_t)\to 0 \quad\mbox{ and }\quad \cW(\rho,\rho_t)\to
    \cW(\rho,\one)\;.
  \end{align}
  Indeed, by elementary Markov chain theory, we know that as $t \to
  \infty$, one has $\rho_t \to \one$ in, say, the Euclidean
  distance. The claim follows immediately from the continuity of $\cH$
  and $\cW$ with respect to the Euclidean distance, the latter being a
  consequence of, e.g., Proposition \ref{prop:Wasserstein-upperbound}.
  
  We now define the function $F:\R_+\to\R_+$ by
  \begin{align*}
    F(t)~:=~\cW(\rho,\rho_t) + \sqrt{\frac{2}{\lambda}\cH(\rho_t)}\;.
  \end{align*}
  Obviously we have $F(0)=\sqrt{\frac{2}{\lambda}\cH(\rho)}$ and by
  \eqref{eq:asymptotic} we have that $F(t)\to\cW(\rho,\one)$ as
  $t\to\infty$. Hence it is sufficient to show that $F$ is
  non-increasing. To this end we show that its upper right derivative
  is non-positive. If $\rho_t\neq \one$ we deduce from Proposition
  \ref{prop:HW-derivative} that
  \begin{align*}
    \ddtr F(t)~\leq~\sqrt{\cI(\rho_t)} - \frac{\cI(\rho_t)}{\sqrt{2\lambda\cH(\rho_t)}}~\leq~0\ ,
  \end{align*}
  where we used  $\mLSI(\lambda)$ in the last inequality. If $\rho_t=\one$, then
  the relation also holds true, since this implies that $\rho_r=\one$ for all
  $r\geq t$.  
\end{proof}
  
In a classical continuous setting it is well known that a logarithmic
Sobolev inequality implies a Poincar\'e inequality by
linearisation. Let us make this explicit in the present discrete
context. Fix $\phi\in\R^\cX$ satisfying
$\sum_x \phi(x)\pi(x)=0$ and for sufficiently small
$\eps>0$ set $\rho^\eps=\one+\eps \phi\in\PXs$. One easily checks that
as $\eps\to0$ we have:
\begin{align*}
  \frac{1}{\eps^2}\cH(\rho^\eps)\longrightarrow
   \frac12
    \|\phi \|^2_\pi\
  ,\qquad
  \frac{1}{\eps^2}\cI(\rho^\eps)\longrightarrow \|\nabla \phi\|
    ^2_\pi\;.
\end{align*}
Thus assuming \ref{eq:MLSI} holds and applying it to $\rho^\eps$ we
get the Poincar\'e inequality \ref{eq:P}. In \cite{OV00} it has been
shown that the Poincar\'e inequality can also be obtained from
Talagrand's inequality by linearisation. The same is true for the 
modified Talagrand inequality involving the distance
$\cW$.

\begin{proposition}\label{prop:T2P}
  Assume that $K$ satisfies \ref{eq:T} for some $\lambda>0$. Then
  $K$ also satisfies \ref{eq:P}. In particular, $\Ric(K)\geq\lambda$
  implies $\Poinc(\lambda)$.
\end{proposition}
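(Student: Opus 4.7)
The plan is to follow the linearization strategy of Otto--Villani: test $\mTal(\lambda)$ against densities $\rho^\eps=\one+\eps\phi$, where $\phi\in\R^\cX$ is a fixed mean-zero function and $\eps>0$ is small enough that $\rho^\eps\in\PXs$. The idea is that both sides of $\mTal(\lambda)$ are of order $\eps$ for such densities, and matching the $\eps^2$ terms in the expansion around $\one$ will produce the Poincar\'e inequality.

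First I would note that a Taylor expansion of $s\mapsto s\log s$ around $s=1$, together with $\sum_x\phi(x)\pi(x)=0$, gives $\cH(\rho^\eps)=\tfrac{\eps^2}{2}\|\phi\|_{L^2(\cX,\pi)}^2+O(\eps^3)$, so $\mTal(\lambda)$ yields the upper bound $\cW(\rho^\eps,\one)\leq(\eps/\sqrt{\lambda})\|\phi\|_{L^2(\cX,\pi)}(1+o(1))$. For a matching lower bound on $\cW(\rho^\eps,\one)$ I would re-use the duality argument from the proof of Proposition \ref{prop:Wasserstein-lowerbound}: for any curve $(\rho_t,\psi_t)\in\CE_1(\one,\rho^\eps)$ and any test function $f\in\R^\cX$, integrating $f$ against the continuity equation and applying Cauchy--Schwarz gives $|\ip{f,\rho^\eps-\one}_\pi|\leq(\sup_{t\in[0,1]}\|\nabla f\|_{\rho_t})\cdot\cW(\rho^\eps,\one)$. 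Specialising $f=\phi$, the left-hand side becomes exactly $\eps\|\phi\|_{L^2(\cX,\pi)}^2$, and minimising over curves restricts the supremum to the $\cW$-geodesic joining $\one$ to $\rho^\eps$ (which exists by Theorem \ref{thm:existenceminimizers}).

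On that geodesic every intermediate point lies at $\cW$-distance at most $\cW(\rho^\eps,\one)\to 0$ from $\one$, so by compatibility of $\cW$ with the Euclidean topology (Theorem \ref{thm:ma11-main}(1)) the densities $\rho_t$ converge to $\one$ uniformly in $t$; since $\theta$ is continuous and $\theta(1,1)=1$, this will give $\sup_t\|\nabla\phi\|_{\rho_t}^2\to\cE(\phi,\phi)$. Assembling the two bounds, dividing by $\eps\|\phi\|_{L^2(\cX,\pi)}$, and sending $\eps\downarrow 0$ produces $\lambda\|\phi\|_{L^2(\cX,\pi)}^2\leq\cE(\phi,\phi)$, which is \eqref{eq:P}. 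The ``In particular'' statement then follows from the chain $\Ric(K)\geq\lambda\Rightarrow\mLSI(\lambda)\Rightarrow\mTal(\lambda)\Rightarrow\Poinc(\lambda)$, combining Theorems \ref{thm:Ric2LSI} and \ref{thm:LSI2T} with the main assertion.

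The delicate point is the uniform convergence $\sup_t\|\nabla\phi\|_{\rho_t}^2\to\cE(\phi,\phi)$ along the geodesic. If one prefers to sidestep both geodesic existence and the topology compatibility, one can instead plug the explicit linear interpolation $\rho^\eps_t=\one+t\eps\phi$, paired with a suitable $\psi^\eps_t$ reconstructed from the continuity equation by orthogonal projection onto $\Ran(\nabla)$ as in the proof of Lemma \ref{lem:smooth-reformulation}, directly into the definition of $\cW$; for this curve the uniform convergence $\hat\rho^\eps_t\to 1$ is immediate and yields $\cW(\rho^\eps,\one)^2\leq\eps^2\cE(\phi,\phi)+o(\eps^2)$, which together with the duality lower bound is already enough to conclude.
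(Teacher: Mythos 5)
Your main argument is correct and is essentially the paper's own proof: linearise the entropy at $\one$, bound the pairing $\ip{\phi,\rho^\eps-\one}_\pi$ by Cauchy--Schwarz along an action-minimising curve joining $\one$ and $\rho^\eps$, show that this curve stays uniformly close to $\one$ so that the weights $\hat\rho_t$ tend to $1$, and insert $\mTal(\lambda)$; the ``in particular'' part is the same chain through Theorems \ref{thm:Ric2LSI} and \ref{thm:LSI2T}. The paper obtains the uniform closeness from the total-variation lower bound of Proposition \ref{prop:Wasserstein-lowerbound} rather than from compactness and topological compatibility, but that difference is cosmetic. One small imprecision: Theorem \ref{thm:existenceminimizers} produces a minimiser in $\CE_1'$, i.e.\ a pair $(\rho,V)$, not a pair $(\rho,\psi)\in\CE_1$ (the infimum over $\CE_1$ need not be attained), so the duality step should be phrased with $V$, as in the paper: finiteness of the action forces $V_t(x,y)=0$ a.e.\ where $\hat\rho_t(x,y)=0$, and Cauchy--Schwarz with weight $\hat\rho_t$ gives $\bigl|\ip{\phi,\rho^\eps-\one}_\pi\bigr|\leq\bigl(\int_0^1\|\nabla\phi\|^2_{\rho_t}\dd t\bigr)^{1/2}\,\cW(\rho^\eps,\one)$.

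The fallback in your last paragraph, however, does not work. For the linear interpolation $\rho^\eps_t=\one+t\eps\phi$ the potential reconstructed from the continuity equation is, to leading order, $\psi^\eps_t\approx -\eps\,\Delta^{-1}\phi$, not a multiple of $\phi$, so the action of this curve is $\eps^2\ip{\phi,(-\Delta)^{-1}\phi}_\pi+o(\eps^2)$: the metric $\cW$ linearises at $\one$ to a negative-order (``$H^{-1}$''-type) norm, and the claimed bound $\cW(\rho^\eps,\one)^2\leq\eps^2\cE(\phi,\phi)+o(\eps^2)$ is not what this curve yields. Moreover, an upper bound for $\cW$ coming from a non-optimal curve cannot be combined with the duality estimate as you suggest: Cauchy--Schwarz along a given curve bounds $\eps\|\phi\|^2_\pi$ by the action of that same curve, not by $\cW(\rho^\eps,\one)$, so $\mTal(\lambda)$ never enters and one only recovers an $\eps$-independent triviality. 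The control of the weights along a (near-)minimising curve therefore cannot be sidestepped this way; but it is cheap even without geodesics: if the action of a curve in $\CE_1'(\one,\rho^\eps)$ is at most $\cW(\rho^\eps,\one)^2+\eps^3$, then by Lemma \ref{lem:reparametrization} its restriction to $[0,t]$ gives $\cW(\one,\rho^\eps_t)\leq\bigl(\cW(\rho^\eps,\one)^2+\eps^3\bigr)^{1/2}$ for every $t$, and Proposition \ref{prop:Wasserstein-lowerbound} then yields $\sup_t d_{TV}(\rho^\eps_t,\one)\to0$ as $\eps\downarrow0$.
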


\begin{proof}
  Assume that \ref{eq:T} holds and let us show  $\Poinc(\lambda)$. The
  second assertion of the proposition then follows from Theorem
  \ref{thm:Ric2LSI} and Theorem \ref{thm:LSI2T}. So fix $\phi\in\R^\cX$
  satisfying $\sum_x\phi(x)\pi(x)=0$ and for sufficiently small
  $\eps>0$ set $\rho^\eps=\one+\eps \phi\in\PXs$. Let
  $(\rho_\cdot^\eps,V_\cdot^\eps)\in\CE_{1}'(\rho^\eps,\one)$ be an action minimizing
  curve. Now we write using the continuity equation
  \begin{align*}
    \sum_x\phi(x)^2\pi(x)~&=~\frac{1}{\eps}\left[\sum_x\phi(x)\big(\rho^\eps(x)-1\big)\pi(x)\right]\\
                      &=~\frac{1}{2\eps}\int_0^1\sum_{x,y}\nabla \phi(x,y)V^\eps_t(x,y)K(x,y)\pi(x)\dd t\ .
  \end{align*}
  Using H\"older's inequality we can estimate
  \begin{align*}
    \sum_x\phi(x)^2\pi(x)~&\leq~\frac{1}{\eps}\left(\int_0^1\norm{\nabla \phi}^2_{\rho^\eps_t}\dd t\right)^\frac12\left(\int_0^1\cA'(\rho_t^\eps,V^\eps_t)\dd t\right)^\frac12\\
                      &=~\left(\frac12\sum_{x,y}\big(\nabla \phi(x,y)\big)^2g^\eps(x,y)K(x,y)\pi(x)\right)^\frac12 \frac{1}{\eps}\cW(\rho^\eps,\one)\ ,
  \end{align*}
  where $g^\eps\in\R^{\cX\times\cX}$ is defined by
  $g^\eps(x,y)=\int_0^1\hat\rho^\eps_t(x,y)\dd t$. Using  $\mTal(\lambda)$ we
  arrive at
  \begin{align*}
    \norm{\phi}^2_\pi~&\leq~\norm{(\nabla \phi)\sqrt{g^\eps}}_\pi\frac{1}{\eps}\sqrt{\frac{2}{\lambda}\cH(\rho^\eps)}\ .
  \end{align*}
  The proof will be finished if we show that as $\eps$ goes to $0$
 \begin{align*}
   \frac{1}{\eps}\sqrt{\frac{2}{\lambda}\cH(\rho^\eps)}~\longrightarrow~\sqrt{\frac{1}{\lambda}}\norm{\phi}_\pi\
   ,\qquad \norm{(\nabla
     \phi)\sqrt{g^\eps}}_\pi~\longrightarrow~\norm{\nabla \phi}_\pi\ .
 \end{align*}
 As before the first statement is easily checked. For the second
 statement it is sufficient to show that $\rho_t^\eps\to \one$
 uniformly in $t$ as $\eps\to 0$, as this implies that $g^\eps\to
 1$. Since $\cW(\rho^\eps,\one)\to 0$ as $\eps\to 0$, this follows
 immediately from the estimate
 \begin{align*}
   \cW(\rho^\eps,\one)~\geq~\sup\limits_t\cW(\rho_t^\eps,\one)~\geq~\sup\limits_t\sum\limits_x\pi(x)\abs{\rho^\eps_t(x)-1}\ ,
 \end{align*}
 where we used that $(\rho_t^\eps)_{t\in[0,1]}$ is a geodesic and the
 fact that $\cW$ is an upper bound for the total variation distance
 (see Proposition \ref{prop:Wasserstein-lowerbound}).
\end{proof}

In the following result we use the probabilistic notation
\begin{align*}
 \E_\pi[\phi] = \sum_{x\in \cX} \phi(x) \pi(x)
\end{align*}
for functions $\phi : \cX \to \R$.

\begin{proposition}\label{prop:W1}
  Assume that $K$ satisfies  $\mTal(\lambda)$ for some $\lambda>0$. Then the
  $T_1(2\lambda)$ inequality holds with respect to the graph distance:
  \begin{align}\label{eq:T1}
    W_{1,g}(\rho,\one)~\leq~\sqrt{\frac{1}{\lambda}\cH(\rho)}\ .
  \end{align}
Furthermore, the sub-Gaussian inequality 
\begin{align}\label{eq:sub-Gauss}
  \E_\pi \big[e^{t(\phi- \E_\pi [\phi])}\big] \leq \exp\Big(\frac{t^2}{4\lambda}\Big)
\end{align}
holds for all $t > 0$ and every function $\phi : \cX \to \R$ that is
$1$-Lipschitz with respect to the graph distance on $\cX$.
\end{proposition}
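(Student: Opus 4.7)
The proposition has two parts and both reduce to already-established ingredients.

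For the $T_1(2\lambda)$ inequality, the plan is to combine the lower bound $\sqrt{2}\,W_{1,g}(\rho,\one)\leq\cW(\rho,\one)$ from Proposition \ref{prop:Wasserstein-lowerbound} with the hypothesis $\mTal(\lambda)$. Squaring the first and inserting the Talagrand bound yields
\begin{align*}
W_{1,g}(\rho,\one)^2 \leq \tfrac12 \cW(\rho,\one)^2 \leq \tfrac{1}{\lambda}\cH(\rho),
\end{align*}
which is exactly \eqref{eq:T1}. This is a one-line argument; the work has already been done in Section \ref{sec:metric}.

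For the sub-Gaussian inequality, the cleanest route is to appeal to the Bobkov--G\"otze characterisation \cite{BG99}, which says that a $T_1$-type inequality of the form \eqref{eq:T1} is \emph{equivalent} to the sub-Gaussian moment bound \eqref{eq:sub-Gauss} with the corresponding constant. Since \eqref{eq:T1} has just been established with constant $1/\lambda$, this gives \eqref{eq:sub-Gauss} with $t^2/(4\lambda)$ on the right-hand side.

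For completeness one can give the direct argument: by Kantorovich--Rubinstein duality, for every $1$-Lipschitz $\phi$ and every density $\rho\in\PX$,
\begin{align*}
\E_\rho[\phi] - \E_\pi[\phi] \;\leq\; W_{1,g}(\rho,\one) \;\leq\; \sqrt{\tfrac{1}{\lambda}\cH(\rho)}.
\end{align*}
The Gibbs variational principle (Donsker--Varadhan) then yields, for any $t>0$,
\begin{align*}
\log\E_\pi\bigl[e^{t(\phi-\E_\pi[\phi])}\bigr]
 = \sup_{\rho\in\PX}\Bigl\{t\bigl(\E_\rho[\phi]-\E_\pi[\phi]\bigr)-\cH(\rho)\Bigr\}
 \leq \sup_{h\geq 0}\Bigl\{\tfrac{t}{\sqrt{\lambda}}\sqrt{h}-h\Bigr\} = \tfrac{t^{2}}{4\lambda},
\end{align*}
the last maximum being attained at $\sqrt{h}=t/(2\sqrt{\lambda})$. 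Exponentiating gives \eqref{eq:sub-Gauss}.

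No step here is technically delicate: all of the real content is already in Proposition \ref{prop:Wasserstein-lowerbound} and the hypothesis $\mTal(\lambda)$. The only thing to be careful about is matching the convention between $T_1$ with constant $2\lambda$ and the factor $\tfrac{1}{4\lambda}$ in the sub-Gaussian bound, which is exactly what the Bobkov--G\"otze dictionary provides.
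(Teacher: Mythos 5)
Your argument is correct and follows essentially the same route as the paper: the $T_1$ bound is obtained by combining Proposition \ref{prop:Wasserstein-lowerbound} with $\mTal(\lambda)$, and the sub-Gaussian estimate comes from the Bobkov--G\"otze equivalence in \cite{BG99}, which the paper simply cites. Your additional direct derivation via Kantorovich--Rubinstein duality and the Gibbs variational principle is just the content of that cited equivalence written out, and it is a correct, self-contained bonus rather than a different approach.
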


\begin{proof}
  The $T_1$-inequality \eqref{eq:T1} follows immediately from Proposition \ref{prop:Wasserstein-lowerbound}. The
  inequalities \eqref{eq:T1} and \eqref{eq:sub-Gauss} are equivalent,
  as has been shown in \cite{BG99}.
\end{proof}

Arguing again exactly as in \cite{OV00}, we infer that a modified
Talagrand inequality implies a modified log-Sobolev inequality (with
some loss in the constant), provided that the \nl Ricci curvature is not
too bad.

\begin{proposition}\label{prop:reverse-OV}
  Suppose that $K$ satisfies  $\mTal(\lambda)$ for some $\lambda > 0$ and
  that $\Ric(K) \geq \kappa$ for some $\kappa > - \lambda$. Then $K$
  satisfies $\mLSI(\tilde\lambda)$, where
\begin{align*}
 \tilde\lambda = \max\bigg\{ 
     \frac{\lambda}{4}\Big(1 + \frac{\kappa}{\lambda}\Big)^2 \ , \ 
 		\kappa  \bigg\}\;.
\end{align*}
\end{proposition}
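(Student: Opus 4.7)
The plan is to mimic the approach of Otto and Villani, combining the H$\cW$I inequality from Theorem~\ref{thm:Ric2HWI} with the hypothesis $\mTal(\lambda)$. Since $\Ric(K)\geq\kappa$, Theorem~\ref{thm:Ric2HWI} gives
\begin{equation*}
  \cH(\rho)\leq\cW(\rho,\one)\sqrt{\cI(\rho)}-\frac{\kappa}{2}\cW(\rho,\one)^2\qquad \forall\rho\in\PX\,,
\end{equation*}
and the task is to eliminate the two occurrences of $\cW$ so as to extract an inequality of MLSI type. I will do this in two complementary ways, producing two constants whose maximum is $\tilde\lambda$.

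To obtain the bound $\tilde\lambda\geq\kappa$, which is only relevant when $\kappa>0$, I will apply Young's inequality $\cW\sqrt{\cI}\leq\tfrac{\kappa}{2}\cW^2+\tfrac{1}{2\kappa}\cI$ to H$\cW$I. This cancels the quadratic term and gives $\cH\leq\tfrac{1}{2\kappa}\cI$ at once, reproducing the reasoning already used in the proof of Theorem~\ref{thm:Ric2LSI} to derive \ref{eq:MLSI} from \ref{eq:HcWI}.

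To obtain the bound $\tilde\lambda\geq\tfrac{\lambda}{4}(1+\kappa/\lambda)^2$, I will use $\mTal(\lambda)$, i.e.\ $\cW(\rho,\one)^2\leq\tfrac{2}{\lambda}\cH(\rho)$, to absorb the (possibly negative) term $-\tfrac{\kappa}{2}\cW(\rho,\one)^2$ into $\cH$. For $\kappa\leq 0$ one has $-\tfrac{\kappa}{2}\cW^2\leq -\tfrac{\kappa}{\lambda}\cH$; inserting this into H$\cW$I gives $(1+\kappa/\lambda)\cH\leq\cW\sqrt{\cI}$. The assumption $\kappa>-\lambda$ enters here to keep the factor $1+\kappa/\lambda$ strictly positive. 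I will then bound $\cW(\rho,\one)$ on the right by $\mTal(\lambda)$ a second time and square, obtaining $(1+\kappa/\lambda)^2\cH^2\leq\tfrac{2}{\lambda}\cH\cI$, which is $\mLSI(\tilde\lambda_2)$ with $\tilde\lambda_2=\tfrac{\lambda}{4}(1+\kappa/\lambda)^2$.

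Combining both routes gives $\mLSI(\max\{\tilde\lambda_1,\tilde\lambda_2\})$, the stated conclusion. The main subtlety is the division of labour: Young's inequality is effective precisely when $\kappa>0$, so that the quadratic term in H$\cW$I has the favourable sign for cancellation, whereas the Talagrand-based absorption is effective precisely when $\kappa\leq 0$, where the $-\tfrac{\kappa}{2}\cW^2$ contribution must be upgraded to a bound by $\cH$. The hypothesis $\kappa>-\lambda$ identifies the exact threshold beyond which the second route degenerates.
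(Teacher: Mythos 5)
Your overall route --- feeding $\mTal(\lambda)$ into the $\HcWI(\kappa)$-inequality and using Young's inequality --- is exactly the ``elementary computation'' that the paper delegates to \cite[Corollary 3.1]{OV00}, but your final combination step does not deliver the stated constant when $\kappa>0$. Note that $(\lambda+\kappa)^2\geq 4\lambda\kappa$ for all $\kappa$, so $\frac{\lambda}{4}(1+\kappa/\lambda)^2\geq\kappa$ always, i.e.\ $\tilde\lambda$ is \emph{always} the first entry of the max. Your second route establishes $\mLSI\big(\tfrac{\lambda}{4}(1+\kappa/\lambda)^2\big)$ only for $\kappa\leq0$, while for $\kappa>0$ your first route yields merely $\mLSI(\kappa)$, which is strictly weaker than the claimed $\mLSI(\tilde\lambda)$ unless $\kappa=\lambda$. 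So ``combining both routes'' proves the proposition only for $\kappa\leq 0$ (and for $\kappa=\lambda$); the regime $0<\kappa<\lambda$ is left uncovered, and there the claimed constant really does exceed what you have shown.

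The gap for $0<\kappa\leq\lambda$ is easy to close: instead of splitting according to the sign of $\kappa$, apply Young's inequality with the parameter $a=\frac{\lambda+\kappa}{2}$, i.e.\ $\cW\sqrt{\cI}\leq\frac{a}{2}\cW^2+\frac{1}{2a}\cI$. Since $a\geq\kappa$ precisely when $\kappa\leq\lambda$, the resulting coefficient $\frac{a-\kappa}{2}=\frac{\lambda-\kappa}{4}$ in front of $\cW(\rho,\one)^2$ is nonnegative, so $\mTal(\lambda)$ may be inserted, giving
\begin{align*}
\cH(\rho)~\leq~\frac{\lambda-\kappa}{2\lambda}\cH(\rho)+\frac{1}{\lambda+\kappa}\cI(\rho)\,,
\qquad\text{hence}\qquad
\cH(\rho)~\leq~\frac{2\lambda}{(\lambda+\kappa)^2}\cI(\rho)\,,
\end{align*}
which is $\mLSI\big(\tfrac{\lambda}{4}(1+\kappa/\lambda)^2\big)$, valid uniformly for $-\lambda<\kappa\leq\lambda$ (your $\kappa\leq0$ computation is the case where no Young step is needed). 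For $\kappa>\lambda$, however, no variant of this argument can reach the stated constant: the scalar inequalities $\cH\leq\cW\sqrt{\cI}-\frac{\kappa}{2}\cW^2$ and $\cW\leq\sqrt{2\cH/\lambda}$ are simultaneously satisfied by $\cH=\cI/(2\kappa)$, $\cW=\sqrt{\cI}/\kappa$ whenever $\kappa\geq\lambda$, so they only imply $\mLSI(\kappa)$; and indeed applying the stated bound to the hypercube with $\kappa=2/n$ and small $\lambda$ would contradict the optimality of $\mLSI(2/n)$ recorded at the end of Section \ref{sec:inequalities}. So in that regime the shortfall lies with the constant as stated (inherited from \cite[Corollary 3.1]{OV00}) rather than with anything you could repair; your write-up should prove the constant $\frac{\lambda}{4}(1+\kappa/\lambda)^2$ for $-\lambda<\kappa\leq\lambda$ as above and fall back on $\mLSI(\kappa)$ for $\kappa>\lambda$, rather than claim the max via the two-regime split.
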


\begin{proof}
  This is an immediate consequence of the $\HcWI(\kappa)$-inequality and
  an elementary computation (see \cite[Corollary 3.1]{OV00}).
\end{proof}

As an application of the results proved in this section, we will show how \nl Ricci curvature bounds can be used to recover functional inequalities with sharp constants in an important example.

\begin{example}[Discrete hypercube]
\label{ex:hypercube-again}
In Example \ref{ex:hypercube} and Proposition \ref{prop:hypercube} we
proved that the Markov kernel $K_n$ associated with the simple random walk
on the discrete hypercube $\cQ^n=\{0,1\}^n$ has \nl Ricci curvature
bounded from below by $\frac{2}{n}$. Applying Theorem \ref{thm:Ric2LSI} and
Proposition \ref{prop:W1} in this setting we obtain the following
result. We shall write $y \sim x$ if $K(x,y) > 0$.

\end{example}

\begin{corollary}
The simple random walk on $\cQ^n$ has the following properties:
\begin{enumerate}
\item the modified log-Sobolev inequality
 $\mLSI(\frac{2}{n})$ holds, i.e., for all $\rho\in\cP_*(\cQ^n)$ we have 
$$\sum_{x\in \cQ^n}\rho(x)\log\rho(x)~\leq~\frac{1}{8}\sum\limits_{x \in \cQ^n, y\sim x}\left(\rho(x)-\rho(y)\right) \left(\log\rho(x)-\log\rho(y)\right)\ .$$ 
\item the Poincar\'e inequality $\Poinc(\frac2n)$ holds, i.e., for
  all $\phi:\cQ^n \to \R$ we have
$$\sum_{x\in \cQ^n}\phi(x)^2
~\leq~\frac{1}{4}\sum\limits_{x \in \cQ^n, y\sim x}\left(\phi(x)-\phi(y)\right)^2\ .$$ 
\item The sub-Gaussian inequality \eqref{eq:sub-Gauss} holds with $\lambda = \frac2n$. 
\end{enumerate}
\end{corollary}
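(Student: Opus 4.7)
The plan is to specialise the abstract implications of Section \ref{sec:inequalities} to the hypercube, using the Ricci bound $\Ric(K_n) \geq 2/n$ already established in Example \ref{ex:hypercube} (or, alternatively, via Proposition \ref{prop:hypercube} with $p = q = 1/2$, noting that $\frac{1}{n}\bigl(\frac{p+q}{2} + \sqrt{pq}\bigr) = \frac{2}{n}$). The only genuine work is to unwind the normalisations: the invariant measure is uniform, $\pi(x) = 2^{-n}$, and $K_n(x,y) = 1/n$ for each of the $n$ neighbours $y$ of a given $x \in \cQ^n$.

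For part (1), Theorem \ref{thm:Ric2LSI} gives $\mLSI(2/n)$, i.e.\ $\cH(\rho) \leq \frac{n}{4}\cI(\rho)$. Substituting the explicit values of $\pi$ and $K_n$ into the definitions of $\cH$ and $\cI$, the common factor $2^{-n}$ cancels, and combining the $1/n$ coming from $K_n$ with the prefactor $n/4$ and the $1/2$ in front of $\cI$ produces precisely the constant $1/8$ in the stated inequality.

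For part (2), Proposition \ref{prop:T2P} gives $\Poinc(2/n)$, and the same normalisation calculation yields the constant $1/4$; note that the displayed inequality must be read for $\phi$ with $\E_\pi \phi = 0$ (equivalently, in its variance form), in line with Definition \ref{def:MLSI}. For part (3), Theorem \ref{thm:LSI2T} upgrades $\mLSI(2/n)$ to $\mTal(2/n)$, and Proposition \ref{prop:W1} then delivers the $T_1(4/n)$ inequality with respect to the graph distance together with the stated sub-Gaussian bound with $\lambda = 2/n$, via the $T_1 \Leftrightarrow$ sub-Gaussian equivalence of \cite{BG99} recalled there.

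I expect no genuine obstacle: the proof is simply the composite implication chain $\Ric(K_n) \geq 2/n \Rightarrow \mLSI(2/n) \Rightarrow \mTal(2/n) \Rightarrow T_1(4/n) \Leftrightarrow$ sub-Gaussian, paired with the separate implication $\Ric(K_n) \geq 2/n \Rightarrow \Poinc(2/n)$, each specialised to the hypercube's particular normalisation. The sharpness of the constant $2/n$ will be verified elsewhere; for the corollary itself only the routine bookkeeping above is required.
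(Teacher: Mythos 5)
Your proof is correct and follows exactly the paper's own route: $\Ric(K_n)\geq\tfrac2n$ from Example \ref{ex:hypercube} (via Proposition \ref{prop:simplecriterion}), then Theorem \ref{thm:Ric2LSI}, Proposition \ref{prop:T2P}, Theorem \ref{thm:LSI2T} and Proposition \ref{prop:W1}, together with the normalisation bookkeeping $\pi(x)=2^{-n}$, $K_n(x,y)=\tfrac1n$ that the paper leaves implicit; your remark that (2) is to be read for mean-zero $\phi$ is also accurate. One slip in your parenthetical alternative: the simple random walk on $\cQ^n$ is the product of two-point chains with $p=q=1$, not $p=q=\tfrac12$ (which gives the lazy walk), and for $p=q=\tfrac12$ the formula of Proposition \ref{prop:hypercube} evaluates to $\tfrac1n$, not $\tfrac2n$; with $p=q=1$ (formally the boundary of the stated range $p,q\in(0,1)$) it does give $\tfrac2n$, but in any case the primary citation of Example \ref{ex:hypercube} already carries the proof, so this does not affect the argument.
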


In all cases the constants are optimal (see \cite[Example 3.7]{BT06}
and \cite[Proposition 2.3]{BHT06} respectively). Moreover, the optimality in (3) implies that the constant  $\lambda = \frac2n$ in the modified Talagrand inequality for the discrete cube is sharp as well.

We finish the paper by remarking that modified logarithmic Sobolev inequalities for appropriately rescaled product chains on the discrete hypercube $\{-1,1\}^n$ can be used to prove a similar inequality for Poisson measures by passing to the limit $n \to \infty$ (see \cite[Section 5.4]{Le01} for an argument along these lines involving a slightly different modified log Sobolev inequality). All of the functional inequalities in Theorem \ref{thm:main-inequalities} are compatible with this limit. However, the sub-Gaussian estimate will (of course) not hold for the limiting Poisson law. This does not contradict the results in this section, since the sub-Gaussian estimates here are obtained using the lower bound for $\cW$ in terms of $W_1$, which relies on the normalisation assumption $\sum_{y} K(x,y) = 1$, which does not hold in the Poissonian limit.

\bibliographystyle{plain}
\bibliography{ricci}

 \end{document}